\documentclass[11pt]{amsart}

\usepackage{DKstyle}
\usepackage{bm}
\usepackage{float}

\usepackage{comment}

\usepackage{caption}
\usepackage[labelfont=rm]{subcaption}

\usepackage[pagebackref=false, colorlinks]{hyperref}

\usepackage{cleveref}

\newcommand{\B}{\mathrm{B}}
\newcommand{\NDI}{\mathbf{NDI}}

\def\DI{\operatorname{\mathbf{DI}}}
\newcommand{\Mod}[1]{\ (\mathrm{mod}\ #1)}
\newcommand{\vertiii}[1]{{\left\vert\kern-0.25ex\left\vert\kern-0.25ex\left\vert #1
    \right\vert\kern-0.25ex\right\vert\kern-0.25ex\right\vert}}
 \makeatletter
\newcommand*{\rom}[1]{\expandafter\@slowromancap\romannumeral #1@}
\makeatother
\theoremstyle{plain}

\newcommand{\ang}{\measuredangle}

\newcommand{\abs}[1]{\lvert#1\rvert} 

\newcommand{\ttr}{\mathtt{r}}

\hypersetup{pdffitwindow=true,linkcolor=blue,citecolor=blue,urlcolor=blue,menucolor=blue}

\DeclareMathOperator{\inj}{inj}
\DeclareMathOperator{\sys}{sys}
\DeclareMathOperator{\Leb}{Leb}

\subjclass{}%
\keywords{}%

\date{}%

\dedicatory{}%
\commby{}%

\title{Zero-one laws for uniform approximation via Gaussian and Eisenstein integers}

\author[Ren\'e Pfitscher]{Ren\'e Pfitscher}
\address{Ren\'e Pfitscher, School of Mathematical Sciences, University of Science and Technology of China (USTC), 230026, Hefei, China
\newline \rule[0ex]{0ex}{0ex} \hspace{8pt}{\tt pfitscher@ustc.edu.cn}}

\author[Anurag Rao]{Anurag Rao}
\address{Anurag Rao, SIMIS, Fudan University, 200433, Shanghai, China
\newline \rule[0ex]{0ex}{0ex} \hspace{8pt}{\tt arao@simis.cn}}

\author[Shucheng Yu]{Shucheng Yu}
\address{Shucheng Yu, School of Mathematical Sciences, University of Science and Technology of China (USTC), 230026, Hefei, China
\newline \rule[0ex]{0ex}{0ex} \hspace{8pt}{\tt yusc@ustc.edu.cn}}

\author[Han Zhang]{Han Zhang}
\address{Han Zhang, School of Mathematical Sciences, Shenzhen University, 518060,  Shenzhen, China
\newline \rule[0ex]{0ex}{0ex} \hspace{8pt}{\tt hanzhang2013@outlook.com}}

\thanks{R.P. and S.Y. are supported by the
National Key R\&D Program of China No. 2024YFA1015100. H.Z. is supported by  the National Natural Science Foundation of China (No. 12501250)}

\date{}

\begin{document}
\begin{abstract}

We establish two distinct zero-one laws for the uniform Diophantine approximation of complex numbers by quotients of Gaussian integers and by quotients of Eisenstein integers. Using tools from homogeneous dynamics, we study this problem by reducing to a shrinking target problem on certain homogeneous spaces of $\SL_2(\C)$. The main novel ingredients include measure estimates on a certain family of neighborhoods of the corresponding critical loci, as well as  new disjointness statements to control the short-range mixing contribution. Due to the  different nature of the critical loci in the Gaussian and Eisenstein cases, these measure estimates are obtained by rather different arguments. 
\end{abstract}

\maketitle

\setcounter{tocdepth}{2} 
\tableofcontents
\section{Introduction}\label{introSEC}

Let $d$ be a positive square-free integer and $K=\Q(\sqrt{-d})$ 
be the corresponding imaginary quadratic field. In this paper we study the uniform Diophantine approximation problem of approximating complex numbers by quotients of Gaussian integers and by quotients of Eisenstein integers, that is, by quotients of algebraic integers in $\Q(\sqrt{-1})$ and $\Q(\sqrt{-3})$ respectively.

The starting point of our investigation is the following  Dirichlet-type approximation theorem in this complex setting: \begin{Thm}
\label{thm:dirichlet}
Let $K=\Q(\sqrt{-d})$ be an imaginary quadratic field with $\cO_K$ its ring of integers. Then there exists some constant $c_K>0$ such that for all $z\in \C$ and for all $t>  c_K$, there exists a pair $(p,q)\in \cO_K^2$ satisfying that 
\begin{align}\label{equ:dirichlet}
|qz-p|< \frac{c_K}{t},\qquad 0<|q|\leq t, 
\end{align}
where $|\cdot|$ is the usual absolute value of complex numbers. In addition, for any $0<c<c_K$ the set of $z\in \C$ for which the system of inequalities 
\begin{align}\label{eq: dirichlet impro}
|qz-p|\leq \frac{c}{t},\qquad 0<|q|\leq t
\end{align}
is solvable in $(p,q)\in \cO_K^2$ for all $t$ sufficiently large is Lebesgue null.  Moreover,  for any $d\neq 1,3$, we have the following bound for $c_K$:
\begin{align}\label{equ:bdck}
\frac{|D_K|}{4+2|D_K|^{1/2}}\leq c_K\leq \frac{2}{\pi}|D_K|^{1/2},
\end{align}
where $D_K$ is the \textit{discriminant} of $K$, that is, $D_K=-4d$ if $d\equiv 1, 2\Mod{4}$ and $D_K=-d$ if $d\equiv 3\Mod{4}$. 
\end{Thm}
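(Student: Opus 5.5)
The plan is to pass to the lattice $\Lambda_z=\{(qz-p,q):(p,q)\in\cO_K^2\}\subset\C^2$ and apply the Dani correspondence. Set
\[
g_s=\mathrm{diag}(e^{s},e^{-s}),\qquad u_z=\begin{pmatrix}1&z\\0&1\end{pmatrix},
\]
and consider $\Lambda_z=u_z\cO_K^2$, viewed inside $\SL_2(\C)/\SL_2(\cO_K)$. Solvability of \eqref{eq: dirichlet impro} at time $t=e^{s}$ is exactly the statement that $g_s\Lambda_z$ has a nonzero vector in the box
\[
B_c=\{(x,y)\in\C^2:\ |x|\le c,\ |y|\le 1\}.
\]
Accordingly I would define
\[
c_K=\sup_{\Lambda}\ \inf\{c>0:\ \Lambda\cap B_c\neq\{0\}\},
\]
the supremum running over all $\Lambda$ in the orbit $\SL_2(\C)\cO_K^2$. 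This supremum is finite by Minkowski's theorem in $\C^2\cong\R^4$. $B_c$ is a convex symmetric body of volume $\pi^2c$, and the covolume of $\cO_K^2$ is $(|D_K|^{1/2}/2)^2$. Minkowski therefore gives a nonzero point of $\Lambda$ in $B_c$ as soon as $\pi^2c\ge 16\cdot |D_K|/4$, i.e.\ for $c\ge 4|D_K|/\pi^2$.

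To reach the sharper upper bound $\frac{2}{\pi}|D_K|^{1/2}$, I would exploit that $\Lambda$ is an $\cO_K$-module: it suffices to find one nonzero $\cO_K$-multiple. Concretely, apply Minkowski to the body $\{|x|\le c,\ |y|\le 1\}$, using the $\cO_K$-structure to replace the $\R^4$ estimate by a "rank one over $\cO_K$" count. Alternatively, apply Minkowski's second theorem and use that the successive minima come in pairs $v,\omega v$. Either way the result should be a bound of the form $c\ge \frac{2}{\pi}|D_K|^{1/2}$.

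Strictness in \eqref{equ:dirichlet} and the condition $t>c_K$ are handled as follows. Since the body is closed, the supremum is attained by compactness, or one simply takes the definition with a strict inequality. For $t\le c_K$ the $y$-constraint can fail only trivially.

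The lower bound for $c_K$ requires exhibiting one lattice in the orbit with no nonzero point in $B_c$ for $c<|D_K|/(4+2|D_K|^{1/2})$. Take the diagonal lattice
\[
\Lambda=\mathrm{diag}(\alpha,\alpha^{-1})\cO_K^2.
\]
Each nonzero vector is $(\alpha a,\alpha^{-1}b)$ with $a,b\in\cO_K$, and a vector lying in $B_c$ with $a,b$ both nonzero forces $|ab|\le c$. Since $|a|,|b|\ge 1$, one has $|ab|\ge1$. Using $|ab|^2\in\Z$, together with the fact that the smallest nonrational norm is about $|D_K|/4$, we take $\alpha$ so that the axis vectors are excluded. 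Optimizing $\alpha$ between $a=0$ and $b=0$ gives the stated expression. The same computation is done with $\cO_K$ and its minimal nonzero elements: norm $1$, then elements of norm at least $|D_K|/4$ when $d\ne1,3$.

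For the zero-one statement with $c<c_K$, choose $\Lambda_0$ and $\epsilon>0$ such that a whole neighborhood $U$ of $\Lambda_0$ in $X=\SL_2(\C)/\SL_2(\cO_K)$ misses $B_c$; this is possible by continuity. The $g_s$-action is ergodic on $X$ by Moore's theorem. Together with the standard fact that $g_s u_z$ expands the unstable horospherical direction, this shows that for Lebesgue-a.e.\ $z$ the orbit $g_s\Lambda_z$ visits $U$ for arbitrarily large $s$. This follows from equidistribution of expanding horospheres, or a Birkhoff-genericity argument along the $u_z$ direction. Every such visit is a time $t$ at which \eqref{eq: dirichlet impro} fails, so the improvable set is null.

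I expect the main obstacle to be the sharp constants in \eqref{equ:bdck}, especially the upper bound $\frac{2}{\pi}|D_K|^{1/2}$. Plain Minkowski over $\R^4$ loses a factor of order $|D_K|^{1/2}$. I would first try the $\cO_K$-module version of Minkowski's theorem, applied to the one-dimensional problem of finding $v\in\Lambda$ with $|v_2|\le1$ while minimizing $|v_1|$.
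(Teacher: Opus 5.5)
Your box $B_c$ is fine: your $c_K$ equals the paper's $r_K^2$, since $\mathrm{diag}(c^{-1/2},c^{1/2})$ maps $B_c$ onto the closed sup-norm ball of radius $\sqrt c$. Your ergodicity/genericity argument for the null set also matches the paper. The genuine gap is the lower bound, because a diagonal test lattice can never give it.

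Since $1\in\cO_K$, the lattice $\mathrm{diag}(\alpha,\alpha^{-1})\cO_K^2$ contains $(\alpha,0)$ and $(0,\alpha^{-1})$. Keeping both out of $B_c$ forces $c<|\alpha|<1$, so diagonal lattices only ever show $c_K\ge 1$. But $\frac{|D_K|}{4+2|D_K|^{1/2}}$ is already about $1.55$ for $d=5$ and grows like $\frac12|D_K|^{1/2}$. The integrality of $|ab|^2$ does not help, since the obstruction comes from the axis vectors.

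What is missing is a \emph{sheared} lattice. It should exploit the fact that every $x\in\cO_K$ with $|x|<\frac12|D_K|^{1/2}$ is a rational integer. The paper takes
\[
\mathrm{g}_1=\begin{pmatrix}\sqrt{\delta_d/\lambda_d}\,d^{1/4} & \sqrt{\delta_d\lambda_d}\,d^{1/4}\,i\\ 0 & \sqrt{\lambda_d/\delta_d}\,d^{-1/4}\end{pmatrix},\qquad \delta_d\sqrt d=\tfrac12|D_K|^{1/2},\quad \lambda_d=\frac{\delta_d\sqrt d}{1+\delta_d\sqrt d},
\]
so that the coordinates of $\mathrm{g}_1(x,y)^t$ are proportional to $x+i\lambda_d y$ and $y$. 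There are three cases:
\begin{enumerate}
\item If $|y|\ge\delta_d\sqrt d$, the second coordinate is at least $\sqrt{\lambda_d\delta_d}\,d^{1/4}$.
\item Otherwise $y\in\Z$. If moreover $x\in\Z$, then $|x+i\lambda_d y|^2=x^2+\lambda_d^2y^2\ge\lambda_d^2$.
\item If $y\in\Z$ but $|x|\ge\delta_d\sqrt d$, then $|x+i\lambda_d y|\ge\delta_d\sqrt d(1-\lambda_d)=\lambda_d$.
\end{enumerate}
Hence $\sys(\mathrm{g}_1\cO_K^2)\ge\sqrt{\lambda_d\delta_d}\,d^{1/4}$, i.e.\ $c_K=r_K^2\ge\frac{|D_K|}{4+2|D_K|^{1/2}}$. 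The purely imaginary shear is essential: it pushes the small, real elements of $\cO_K$ off the real line, which no diagonal matrix can do.

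The obstacle you anticipate for the upper bound comes from an arithmetic slip. $B_c$ is a product of a disc of radius $c$ and a disc of radius $1$, so its volume is $\pi^2c^2$, not $\pi^2c$. With the correct volume, plain Minkowski in $\R^4$ gives a nonzero lattice point once $\pi^2c^2\ge 16\cdot\frac{|D_K|}{4}$, i.e.\ $c\ge\frac{2}{\pi}|D_K|^{1/2}$. This is exactly the paper's argument, phrased there with the sup-norm ball of volume $\pi^2r^4$. The $\cO_K$-module and second-theorem refinements you sketch are unnecessary, and as written they are not arguments.

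Your treatment of the strict inequality $|qz-p|<c_K/t$ also fails: attaining the supremum, or redefining with strict inequalities, does not turn $\le$ into $<$. The paper argues as follows. Given $t>c_K$, use discreteness of $\{|q|:q\in\cO_K\}$ to pick $t'>t$ with no $|q|$ in $(t,t']$. Apply the closed-box statement at $t'$ to get $|qz-p|\le c_K/t'<c_K/t$ and $|q|\le t'$, hence $|q|\le t$. Finally $q\ne 0$, since otherwise $0<|p|\le c_K/t'<1$, which is impossible for $p\in\cO_K$. This last step is where the hypothesis $t>c_K$ is used.
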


The constant $c_K$ that appears in Theorem \ref{thm:dirichlet} is known as the \textit{Dirichlet constant}. When $K=\Q(\sqrt{-1})$, \eqref{equ:dirichlet} and \eqref{eq: dirichlet impro} in Theorem \ref{thm:dirichlet} were originally proved by Chevallier \cite[Theorem 5]{Chevallier2021}, 
who also derived  the explicit constant $c_{\Q(\sqrt{-1})}=\frac{\sqrt{2}}{3-\sqrt{3}}$. For a general imaginary quadratic field $K$, the existence of such a  constant $c_K$ follows from standard techniques of homogeneous dynamics using Dani's correspondence and ergodicity following the ideas developed in \cite[Sec. 8.7]{KleinbockMargulis1999}; see Section \ref{sec:thm11} for more details. 

{From the bound \eqref{equ:bdck} we immediately get that
\begin{align*}
\frac12\leq \liminf_{d\to\infty}\frac{c_K}{|D_K|^{1/2}} \leq \limsup_{d\to\infty}\frac{c_K}{|D_K|^{1/2}}\leq \frac{2}{\pi}=0.6366\cdots.
\end{align*}
It is an interesting question to get exact values or more precise asymptotic estimates on the constant $c_K$.  Our bound \eqref{equ:bdck} on $c_K$ is obtained by relating it to a certain \textit{critical radius} of a family of lattices in $\C^2$ parameterized by the homogeneous space $\SL_2(\C)/\SL_2(\cO_K)$ (see \eqref{equ:costrel}) and then proving bounds on this critical radius (see Proposition \ref{prop:bdonrk}).  We mention that for some small values of $d$, our analysis indeed gives  better lower bounds (see \eqref{equ:bdckmp}); we state the bound \eqref{equ:bdck} here only for simplicity of presentation. }

{
For the two special cases when $K=\Q(\sqrt{-1})$ and $K=\Q(\sqrt{-3})$, Minkowski \cite{MinkowskiDA} explicitly determined the corresponding \textit{critical locus}, i.e. the set of lattices attaining the critical radius. In particular, it follows from his result, together with the aforementioned relation between $c_K$ and the critical radius, that 
\begin{align}\label{equ:ckvalue}
c_{\Q(\sqrt{-1})}=\frac{\sqrt{2}}{3-\sqrt{3}} \quad \text{and}\quad  c_{\Q(\sqrt{-3})}=1.
\end{align}
}

\medskip

Fix an imaginary quadratic field $K$. Let $\psi_{1}(t)=1/t$. {Inspired by Davenport and Schmidt’s notion of improving Dirichlet’s theorem, for any $0<c<c_K$, we say that $z\in \C$ is \textit{$(c,K)$-Dirichlet improvable} if it satisfies \eqref{eq: dirichlet impro} for all sufficiently large $t>1$}. Let $\DI_K(c\psi_1)\subset \C/\cO_k$ be the collection of all $(c,K)$-Dirichlet improvable complex numbers. This set is well defined, since it follows directly from the definition that $z$ is $(c,K)$-Dirichlet improvable if and only if $z+z'$ is $(c,K)$-Dirichlet improvable for every $z'\in \cO_K$. The set of all \textit{Dirichlet improvable} complex numbers {with respect to $K$} is then defined to be
\[\DI_K:=\bigcup_{c\in(0,c_K)} \DI_K(c\psi_1). \]
It is then an immediate consequence of Theorem \ref{thm:dirichlet} that $\Leb(\DI_K)=0$, where $\Leb$ denotes the  Lebesgue measure on $\C$ normalized so that 
$
\Leb(\C/\cO_K)=1.
$ 
This is analogous to the well known results  in real Euclidean spaces established by Davenport and Schmidt \cite{hDwS69b,hDwS70a}, and then Kleinbock and Weiss \cite{KleinbockWeiss2008}.

Rather than these special functions $c\psi_1$ with $c\in (0,c_K)$, one would like to consider a more general function $\psi$ and investigate those $z\in \C$ satisfying (\ref{eq: dirichlet impro}) for $\psi$ in place of $c\psi_1$. More precisely, let $\psi: [1,\infty)\to (0,1)$ be a continuous decreasing function. 
We say $z\in \mathbb{C}$ is \textit{$(\psi,K)$-Dirichlet} 
if for all $t>1$ sufficiently large, the following system of inequalities is solvable in $(p,q)\in \cO_K^2$:
\begin{align}\label{equ:unifapp}
|qz-p|< \psi(t),\quad 0<|q|< t. 
\end{align}

Let $\mathbf{DI}_K(\psi)\subset \C/\cO_K$ be the set of $(\psi,K)$-Dirichlet complex numbers. Note that $\mathbf{DI}_K(\psi)$ is well-defined since for any $z\in \C$ and $z_0\in \cO_K$,  $z$ is $(\psi,K)$-Dirichlet if and only if $z+z_0$ is $(\psi,K)$-Dirichlet. We would like to understand the size of $\mathbf{DI}_K(\psi)$ for a given approximating function $\psi$. This problem is often referred to as \textit{uniform Diophantine approximation}.

When $K=\Q(\sqrt{-1})$ or $K=\Q(\sqrt{-3})$, the following theorem establishes zero-one laws for the Lebesgue measure of $\DI_K(\psi)$ for a general approximating function $\psi$ satisfying some mild assumptions.

\begin{Thm}\label{thm: zero-one laws}
Let $K=\Q(\sqrt{-1})$ or $K=\Q(\sqrt{-3})$ with $c_K$ as in \eqref{equ:ckvalue}. Let $\psi(t):[1,\infty)\to (0,1)$ be a continuous decreasing function satisfying that $\psi(t)<\frac{c_K}{t}$ for all $t\geq 1$ and $t\mapsto t\psi(t)$ is increasing. Let \[F_{\psi}(t):=\frac{c_K-t\psi(t)}{c_K}.\]
Then the following zero-one laws hold.

\begin{enumerate}
 \item[(i)]  If $K=\Q(\sqrt{-1})$, then
\begin{align}\label{equ:01lawgauss}
\mathrm{Leb}(\DI_K(\psi))=\left\lbrace\begin{array}{cc}
0 & \text{if } \sum_kk^{-1}F_{\psi}(k)^4 =\infty,\\
1 &  \text{if } \sum_kk^{-1}F_{\psi}(k)^4<\infty.
\end{array}\right. 
\end{align}  
\item[(ii)]
If $K=\Q(\sqrt{-3})$, then
\begin{align}\label{equ:01laweisenstein}
\mathrm{Leb}(\DI_K(\psi))=\left\lbrace\begin{array}{cc}
0 & \text{if } \sum_kk^{-1}F_{\psi}(k)^2\log\left(\tfrac{1}{F_{\psi}(k)}\right) =\infty,\\
1 &   \text{if } \sum_kk^{-1}F_{\psi}(k)^2\log\left(\tfrac{1}{F_{\psi}(k)}\right) <\infty.
\end{array}\right. 
\end{align}
\end{enumerate}

\end{Thm}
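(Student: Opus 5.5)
We follow the Kleinbock--Wadleigh strategy, adapted to $G=\SL_2(\C)$ and $\Gamma=\SL_2(\cO_K)$.

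\textbf{Dani correspondence.} Set $X=G/\Gamma$ with Haar probability measure $\mu$. For $z\in\C$ put
\[u_z=\begin{pmatrix}1&z\\0&1\end{pmatrix},\qquad g_s=\begin{pmatrix}e^{s}&0\\0&e^{-s}\end{pmatrix}.\]
Let $\Lambda_z=u_z\cO_K^2$. For $x\in X$ let $\Delta(x)$ be the sup-norm critical radius, i.e.\ the smallest $r$ such that the box $\{|v_1|<r,\ |v_2|<r\}$ contains a nonzero vector of the lattice $x$; then $c_K$ is the maximum of $\Delta$. With $t=e^{s}\cdot(\text{const})$, condition \eqref{equ:unifapp} at time $t$ becomes
\[\Delta(g_s\Lambda_z)<\sqrt{t\psi(t)}.\]
So $z\notin \DI_K(\psi)$ exactly when $g_s\Lambda_z$ lies in the target
\[B_t:=\{x:\Delta(x)\ge \sqrt{t\psi(t)}\}\]
for an unbounded set of $s$. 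Since $\psi(t)<c_K/t$, each $B_t$ is a shrinking neighbourhood of the critical locus $\mathcal C=\Delta^{-1}(c_K)$. The monotonicity of $t\psi(t)$ lets us pass, by the usual interpolation argument, from continuous time to the integer times $s=\log k$. This loses only constants, and we also thicken each target slightly so it is stable under $g_{[0,1]}$.

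\textbf{Measure of the targets.} This is the first main step and where Gaussian and Eisenstein diverge. Minkowski's description gives $\mathcal C$ as finitely many compact orbits of some group $H$, namely a stabiliser of a critical lattice inside $\SL_2(\C)$. We expand $\Delta$ near $\mathcal C$.

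\emph{Gaussian case.} $\mathcal C$ should be a finite set of points (orbits of a finite group), and $c_K-\Delta$ vanishes to first order in every direction of the 6-dimensional tangent space. The achieved nonzero-vector configuration should then cut out a region of polytope type, but I expect only 4 independent linear constraints to be active. This would give
\[\mu(B_t)\asymp F_\psi(t)^4.\]

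\emph{Eisenstein case.} I expect $\mathcal C$ to be non-isolated, or to have a degenerate direction in which $c_K-\Delta$ grows quadratically, or a one-parameter family. Integrating over the fibres then produces
\[\mu(B_t)\asymp \varepsilon^2\log(1/\varepsilon),\qquad \varepsilon=F_\psi(t).\]
Computing these exponents from the explicit critical configurations (the $\sqrt2/(3-\sqrt3)$ lattice and the hexagonal-type configuration), with matching upper and lower bounds, is the part I expect to be hardest. I would first write $\Delta$ locally as a max/min of finitely many smooth functions $|v_i(x)|$ over the nonzero vectors realizing the minimum, linearize, and compute the volume of the resulting region in $\mathfrak{sl}_2(\C)$ coordinates.

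\textbf{Zero-one law via quasi-independence.} Let $A_k=\{z\in\C/\cO_K:\ g_{\log k}\Lambda_z\in B_k\}$.

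\emph{Convergence direction.} By effective equidistribution of expanding horospheres, $\Leb(A_k)\ll \mu(B_k)$. Borel--Cantelli then gives full measure for $\DI_K(\psi)$ when $\sum_k k^{-1}F_\psi(k)^{a}<\infty$, where the factor $k^{-1}$ comes from the density of the times $\log k$. After passing to dyadic or continuous time we work with $\sum_j\mu(B_{e^{j}})$, comparable to the stated series.

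\emph{Divergence direction.} We need $\Leb(A_k\cap A_l)\le C\,\Leb(A_k)\Leb(A_l)$ up to summable errors, and then apply Erdős--Rényi/Kochen--Stone plus ergodicity to get measure zero. For $|\log k-\log l|$ large this follows from effective (exponential) mixing of $g_s$ on $X$, applied to smoothed indicators of $B_k$. The short-range pairs, $|\log l-\log k|\le C\log(1/F_\psi)$, are the second obstacle. Here I would prove a disjointness statement: if $x$ and $g_s x$ both lie near $\mathcal C$ for intermediate $s$, then $x$ lies in a set of much smaller measure, using that $g_s$ moves $\mathcal C$ off itself except along the stabiliser. This controls the short-range contribution by $\ll \mu(B_k)$ times a summable factor.
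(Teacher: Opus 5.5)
Your architecture matches the paper's: Dani correspondence, shrinking targets around $\mathfrak{C}_K$, effective single and double equidistribution, Borel--Cantelli in both directions, and a disjointness statement for short-range pairs. But the two steps that carry the content are either missing or rest on a wrong picture of $\mathfrak{C}_K$.

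\emph{Measure estimates.} The sup norm, hence $\sys$, is invariant under $\cR=\{\mathrm{diag}(e^{i\theta},e^{-i\theta})\}$, so $\mathfrak{C}_K$ is never finite. For $\Q(\sqrt{-1})$ it is the two circles $\cR h_0\cO_K^2\cup\cR h_1\cO_K^2$; for $\Q(\sqrt{-3})$ it is the $3$-dimensional set $\cR\{u(z)\cO_K^2\}\cup\cR\{u^-(z)\cO_K^2\}$, $z\in\cF$. Accordingly, up to $w_0$-conjugates, $\Delta^{-1}[0,r]$ is trapped between sets of two kinds. In the Gaussian case they are $\{gh_0\cO_K^2: |g_{11}|=1+O(r),\ |g_{12}|,|g_{21}|\ll r\}$, of measure $\asymp r^5$ (five transverse real directions). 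In the Eisenstein case they are $\{g\Z[j]^2: |g_{11}|=1+O(r),\ |g_{12}g_{21}|\ll r\}$ with $g_{12}$ ranging over $\cF\ni 0$, of measure $\asymp r\int_{\cF}\min\{1,r/|z|\}^2\,dz\asymp r^3\log\frac1r$. The logarithm comes from the transverse thickness $r/|g_{12}|$ blowing up at $\cO_K^2$, where the two families meet. So your unthickened $B_t$ has measure $\asymp F_\psi^5$, resp.\ $F_\psi^3\log(1/F_\psi)$, not $F_\psi^4$, resp.\ $F_\psi^2\log(1/F_\psi)$. The exponents in the theorem belong to the flow-thickenings $\bigcup_{0\le s<1}g_{-\tau s}\Delta^{-1}[0,r]$, and gaining the factor $r^{-1}$ requires $g_{-s}\Delta^{-1}[0,r]\cap\Delta^{-1}[0,r]=\emptyset$ for $Cr\le|s|\le\e_0$. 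Your heuristics cannot produce these exponents: four linear constraints at an isolated point of a $6$-manifold do not cut out a bounded set, and a quadratically degenerate direction yields a factor $r^{1/2}$, not a logarithm. The actual two-sided bounds, Propositions~\ref{prop:ulbd-1} and~\ref{prop:ulbd-3} (using Lemma~\ref{lemma: basic inequality} and, for $\Q(\sqrt{-3})$, Lemma~\ref{lem:ropro}), are the bulk of the work and are absent. Two smaller slips: $\max\sys=r_K=\sqrt{c_K}$, not $c_K$; and sampling at all times $\log k$ gives $\sum_k\mu(B_k)$ with no factor $k^{-1}$, so you should sample at times $\tau k$ with thickened targets.

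\emph{Disjointness.} Your mechanism, that $g_s$ moves $\mathfrak{C}_K$ off itself, fails precisely for $\Q(\sqrt{-1})$. By Lemma~\ref{rmk:cloges}, $g_\tau\Lambda=\Lambda$ for every $\Lambda\in\mathfrak{C}_K$, with $\tau=\log(2+\sqrt3)$. Hence every neighbourhood $T$ of $\mathfrak{C}_K$ meets each $g_{k\tau}T$, and in fact $\mu_K(T\cap g_{-k\tau}T)\gg e^{-4k\tau}\mu_K(T)$, far above $\mu_K(T)^2$ for $k\le\frac1\tau\log\frac1r$. So no disjointness holds for the natural targets. The missing idea is to pass to sub-targets of comparable measure that avoid a smaller neighbourhood of $\mathfrak{C}_K$. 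The paper samples at times $\tau k$ and uses the shell $\underline K_r$ with $|g_{12}|,|g_{21}|\in(\frac{r}{2(2+\sqrt3)},\frac r2)$. Since $g_{k\tau}gh_0\cO_K^2=(g_{k\tau}gg_{-k\tau})h_0\cO_K^2$ multiplies $g_{12}$ by $e^{2k\tau}$, the translates $g_{\tau k}\widetilde\Delta'_\tau(r)$, $0\le k\le\frac1{4\tau}\log\frac1r$, are pairwise disjoint (Proposition~\ref{prop:disjoint}). For $\Q(\sqrt{-3})$ one likewise restricts to $|g_{21}|<\sqrt r$, so that the vector $(g_{11},g_{21})$ becomes short under $g_{-k}$. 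These sub-targets, and their smoothings, are what enter the divergence Borel--Cantelli. You also need the reduction $k^{-\gamma}\le\ttr(k)\le k^{-\gamma'}$ of Lemma~\ref{lemma: reduction for divergence}. It ensures that the window $j-i<\Theta\log j$ not reached by mixing is covered by $O(1)$ disjointness blocks, and that the errors $e^{-c\tau k}\rho_k^{-L}$ are negligible. Finally, Kochen--Stone with $C>1$ only gives positive measure, and you do not specify an ergodic action preserving $\DI_K(\psi)$. The paper instead proves $Q_{k_1,k_2}\ll\sum_j b_j$ and applies Sprind\v{z}uk's lemma.
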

The condition $\psi(t)<\tfrac{c_K}{t}$ in the above theorem is assumed for simplicity. Indeed, suppose $\psi(t)\geq \tfrac{c_K}{t}$ holds for {an  unbounded set of $t>1$}. Then, since $t\to t\psi(t)$ is assumed to be increasing,  there exists some $t_0\geq 1$ such that for all $t\geq t_0$ {we have $\psi(t)\geq \tfrac{c_K}{t}>\psi'(t):=c_K(1-t^{-1})/t$. The function $\psi'(t)$ satisfies all the assumptions of Theorem \ref{thm: zero-one laws}, and the corresponding series is convergent for both the Gaussian and Eisenstein cases. In view of Theorem \ref{thm: zero-one laws}, we have $\Leb(\DI_K(\psi))=\Leb(\DI_K(\psi'))=1$.}
It is also worth noting that the two series appearing in the statement of this theorem are different. This difference stems from the distinct structures of the critical loci for $\Q(\sqrt{-1})$ and $\Q(\sqrt{-3})$, as determined by Minkowski~\cite{MinkowskiDA}.

\bigskip

Let us briefly present the state of the art surrounding uniform Diophantine approximation for a general approximating function $\psi$. This problem  originated from the pioneering work of Kleinbock and Wadleigh \cite{KleinbockWadleigh2018}. In this work, they ingeniously utilized the regular continued fraction theory to establish the zero-one law for the Lebesgue measure of $\psi$-Dirichlet real numbers. This zero-one law was later generalized to higher dimensional real matrix spaces with respect to weighted quasi-norms of any dimension by Kleinbock, Str\"{o}mbergsson and Yu \cite{KleinbockStrombergssonYu2022}, with an additional assumption on the function $\psi$ in the divergent case  (see \cite[(1.11)]{KleinbockStrombergssonYu2022}). Recently, Str\"{o}mbergsson and Yu \cite{StrombergssonYu2026} removed this additional condition by a geometric  disjointness argument, thereby establishing the full zero-one law for uniform approximation in real matrix spaces of any dimension. In a different but related context, Kleinbock and Rao \cite{KR22} studied zero-one laws for uniform approximation in the real line with respect to the Euclidean norm and the notion of Dirichlet-improvability with respect to more general norms in Euclidean space.

Another notable result is the work of Kleinbock and Wadleigh \cite{KleinbockWadleigh2019} on a zero-one law for uniform approximation in real matrix spaces in the inhomogeneous setting. The main result in \cite{KleinbockWadleigh2019} was further generalized in the work of Kim and Kim \cite{KimKim2022} through the consideration of Hausdorff measure and dimension.

\medskip

The theory of uniform Diophantine approximation in the real setting is already well-established. In contrast, the Diophantine approximation problem in the realm of complex numbers is far from being well-understood. Specifically, this pertains to the problem of approximating a complex number using quotients of algebraic integers of an imaginary quadratic field. Many of the existing works on Diophantine approximation in the complex setting aim to develop an algorithm that has the same advantageous properties as the regular continued fraction in the real case. For instance, Lakein \cite{Lakein73} and Schmidt \cite{A.Schmidt75} explored the properties of the convergents obtained from different versions of continued fraction expansion using algebraic integers for some imaginary quadratic field. These various continued fraction expansions have been studied in a unified manner by Dani and Nogueira in \cite{DN14} for the case of Gaussian integers.

One of these algorithms was given by Hurwitz \cite{Hurwitz87} for approximating complex numbers using Gaussian integers  $\Z[i]$, and is known as the \textit{Hurwitz continued fraction expansion}. This has been studied extensively and is used to establish some metric properties of sets determined by the arising partial quotients, see e.g. \cite{HX22,HX25,BGRH25}. Continued fraction was also used to characterize badly approximable complex numbers \cite{Hines19,Gerardo20} with respect to an imaginary quadratic field, along with some other papers focused on studying the Hausdorff dimension of badly approximable numbers \cite{EK10,EGL16,KleinbockLy2016}. See \eqref{equ:badapp} below for the definition of badly approximable numbers in the complex setting.

Additionally, the  \textit{asymptotic approximation} theory in this complex setting has already been well-studied up to now. For a given approximating function $\psi$, distinct from the uniform approximation, asymptotic approximation investigates those $z\in \C$ for which (\ref{equ:unifapp}) has solutions for an unbounded set of $t$ . LeVeque \cite{LeV52} first derived a Khintchine-type theorem for asymptotic approximation of a complex number with Gaussian integers. LeVeque's result was generalized to arbitrary imaginary quadratic field by Sullivan \cite{Sul82} using hyperbolic geometry. This was further extended by Ly \cite{Ly16} to complex matrix spaces of any dimension based on tools from homogeneous dynamics. Recently, a Schmidt-type counting result for asymptotic approximation in this complex setting was obtained by Alam and Str\"ombergsson \cite{AS24}, relying on an adelic Rogers' formula.

\medskip

Although the above-mentioned works undertake in-depth research on continued fraction  or asymptotic approximation theory in the complex context, none of them study zero-one laws for the uniform approximation problem in this setting. To the best of our knowledge, Theorem \ref{thm: zero-one laws} establishes the first such  zero-one laws. In contrast to the Khintchine-type theorem for asymptotic approximation in the complex setting, where the zero-one law is determined by a uniform series for any imaginary quadratic field $K$ (see \cite[Theorem 5]{Sul82}), the zero-one laws for uniform approximation in this setting differ significantly for $K=\Q(\sqrt{-1})$ and $K=\Q(\sqrt{-3})$. {This reflects the subtlety of the uniform approximation theory in the complex setting and suggests that this problem may depend on deep arithmetic properties of the individual imaginary quadratic field. }

\bigskip

\noindent{\textbf{On the proof of the zero-one law for uniform approximation.}}
{Unlike some previous works \cite{HX22,BGRH25,HX25} on Diophantine approximation over $K=\Q(\sqrt{-1})$, which rely on  Hurwitz continued fraction, our proof of  Theorem \ref{thm: zero-one laws} uses tools from homogeneous dynamics}. This enables us to consider uniform Diophantine approximation in $\C$ using not only Gaussian integers, but also Eisenstein integers. 

The connection between Diophantine approximation and homogeneous dynamics is known as \textit{Dani's correspondence}. This was discovered by Dani \cite{Dani1985}, and it enables one to ``observe" the Diophantine properties of a given number via the behavior of certain associated orbit in homogeneous spaces. Dani's correspondence was later refined by Kleinbock and Margulis \cite{KleinbockMargulis1999} for  asymptotic Diophantine approximation, and then was adjusted in the work of Kleinbock-Str\"{o}mbergsson-Yu \cite{KleinbockStrombergssonYu2022} for uniform approximation in real setting. 

In this article, we adapt the approach of \cite{KleinbockStrombergssonYu2022} to the complex setting. More precisely, let $K=\Q(\sqrt{-1})$ or $\Q(\sqrt{-3})$. It is well-known that the corresponding \textit{Bianchi group} $\Gamma_K=\SL_2(\cO_K)$ is a lattice in $G=\SL_2(\C)$ (see e.g. \cite{MR03}). We let $X=G/\Gamma_K$ and $\mu_K$ be the unique $G$-invariant probability measure on $X$. Note that the homogeneous space $X$ naturally parameterizes the space of cocompact discrete $\cO_K$-modules in $\C^2$ which admit a basis with determinant $1$ via the map $g\G_K\mapsto g\cO_K^2$.  Consider the function $\Delta:X\to \R$ defined by 
\begin{align}\label{def:delta} 
\Delta(\Lambda)=\sup_{\bm{v}\in \Lambda\smallsetminus\{\bm{0}\}} \log\left( \tfrac{\sqrt{c_K}}{\norm{\bm{v}}} \right), 
\end{align}
where $c_K$ is the Dirichlet constant given as in Theorem \ref{thm:dirichlet} {and $\|\cdot\|$ is the supremum norm on $\C^2$}. {Indeed, by relating the constant $c_K$ to a certain critical radius for lattices in  $X$, we will see that $\Delta(\Lambda)\geq 0$ for all $\Lambda\in X$ and the} set $\Delta^{-1}\{0\}\subset X$ is referred to as \textit{the critical locus} {with respect to the norm $\|\cdot\|$}. By Dani's correspondence (see Proposition \ref{prop:dani}), the proof of Theorem \ref{thm: zero-one laws}  reduces to a shrinking target problem on $X$.  {The corresponding shrinking targets are described in terms of the level sets $\Delta^{-1}[0,r]$ for small parameters $r>0$. It is therefore necessary to understand the asymptotic behaviour of the  measures of these  level sets, which is the content of the following  result.}

\begin{Thm}\label{thm: measure estimates}
The following measure estimates hold.
\begin{align}\label{equ:meestboth}
\mu_K(\Delta^{-1}[0,r])\asymp \left\lbrace\begin{array}{cc}
r^5 & \text{ if }\, K=\Q(\sqrt{-1}),\\
r^3\log\left(\frac{1}{r}\right) & \text{ if }\,  K=\Q(\sqrt{-3}),
\end{array}\right. \qquad \text{as $r\to 0^+$}.
\end{align}
\end{Thm}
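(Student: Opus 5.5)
Since $\Delta^{-1}[0,r]$ is a small neighbourhood of the critical locus $\Delta^{-1}\{0\}$, the measure $\mu_K(\Delta^{-1}[0,r])$ should be computed locally around each critical lattice and then integrated along the locus. Three inputs drive this.

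\textbf{The critical lattices.} By Minkowski \cite{MinkowskiDA}, in both cases the critical locus is a finite union of orbits $H\Lambda_0$, where $H=\{\operatorname{diag}(u_1,u_2): |u_1|=|u_2|=1,\ u_1u_2=1\}$ is the compact stabilizer of the sup-norm ball, possibly enlarged by a small compact group. So the locus is a compact submanifold of small dimension inside the $6$-dimensional real manifold $X$.

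\textbf{Active vectors.} For each critical $\Lambda_0$ we record the finite set $S(\Lambda_0)$ of $\cO_K$-module vectors of norm exactly $\sqrt{c_K}$, counted up to units. The Gaussian and Eisenstein configurations differ here.

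\textbf{Local description.} Write nearby lattices as $\exp(Y)\Lambda_0$ with $Y\in\mathfrak{sl}_2(\C)$ transverse to $\operatorname{Lie}(H)$. We have $\Delta(\exp(Y)\Lambda_0)\le r$ if and only if every lattice vector has norm at least $\sqrt{c_K}e^{-r}$. Near $\Lambda_0$ only the vectors in $S(\Lambda_0)$ are relevant, so the condition becomes
\[
\|\exp(Y)v\|\ge \sqrt{c_K}(1-r+O(r^2)) \quad\text{for all } v\in S(\Lambda_0).
\]
Each sup-norm $\|\exp(Y)v\| = \max(|(\exp(Y)v)_1|,|(\exp(Y)v)_2|)$ is a maximum of two smooth functions. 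Linearising, we obtain a finite family of piecewise-linear constraints on $Y$:
\[
\max_{j\in J(v)} \ell_{v,j}(Y)\ge -r .
\]
Here $J(v)$ is the set of coordinates where $v$ attains the maximum, and $\ell_{v,j}$ is the derivative of $\log|(\exp(Y)v)_j|$. The set of admissible $Y$ is therefore a cone-like region $C_r = rC_1$ cut out by finitely many max-of-linear inequalities. Since $\Delta\ge 0$ everywhere, $C_1$ is bounded in the directions transverse to the locus, except in degenerate directions where the linear constraints vanish.

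\textbf{The Gaussian case $r^5$.} The claimed exponent says the locus is effectively $1$-dimensional: the torus orbit is $1$-dimensional, leaving $5$ transverse directions. I would check from Minkowski's description that the linear constraints above already confine all $5$ transverse directions to $O(r)$. This is a finite linear-algebra check on explicit vectors, such as those coming from $\sqrt{2}/(3-\sqrt3)$ geometry. It gives $\mu\asymp r^5$: the upper bound from boundedness of $C_1$, the lower bound from $C_1$ having nonempty interior. One must also verify that the quadratic error terms are absorbed in the $O(r)$ box, which holds once $C_1$ is bounded.

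\textbf{The Eisenstein case $r^3\log(1/r)$.} This is the main obstacle. The exponent suggests a $2$-dimensional stratum: the locus has larger dimension, or the linear constraints leave a degenerate direction. The degenerate direction would be constrained only at second order, yielding a region like
\[
\{(a,b,c,s) : |a|,|b|,|c|\lesssim r,\ \ \text{quadratic form in } s \text{ bounded by } r\}
\]
whose volume integrates to $r^3\log(1/r)$. A typical source is an integral of the shape $\int_{r}^{1} r^{k}\,\frac{ds}{s}$, i.e.\ a family of hyperbolic-type constraints $xy\lesssim r$ in two transverse variables. Concretely, I expect the Eisenstein critical lattice to have more active vectors, for example the hexagonal configuration. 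In two transverse directions $(x,y)$ the constraint then reduces to something like $\min(x,y)\ge -r$ together with $x+y\le r$ up to a product term $xy$. This gives area $\asymp r^{2}\log(1/r)$ rather than $r^2$, because the region extends along the axes out to distance $O(1)$ with width $r/|x|$.

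\textbf{Plan for the Eisenstein case.} I would first compute the active vectors and the Hessian of the relevant norms explicitly at the critical lattice. I would then identify the two directions in which the first-order constraints degenerate, and establish matching upper and lower bounds for the area of the hyperbolic region $\{|xy|\lesssim r,\ |x|,|y|\le 1\}$. Multiplying by $r$ for the one remaining transverse direction gives $r\cdot r^2\log(1/r)=r^3\log(1/r)$. Finally, away from a fixed neighbourhood of the locus, $\Delta$ is bounded below by compactness together with the behaviour in the cusp, so those regions contribute nothing for small $r$.
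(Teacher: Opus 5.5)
\textbf{Gaussian case.} This half is sound, and it is the paper's argument in linearized form. By Remark~\ref{rmk:shvecdes}, every active vector of $h_0\mathbb{Z}[i]^2$ has one dominant coordinate. In the six constraints \eqref{equ:ineqgauss1}--\eqref{equ:ineqgauss2}, the coefficients of $g_{12}$ and of $g_{21}$ have phases $e^{(-\frac12+\frac{2k}{3})\pi i}$, $k=0,1,2$, spaced by $2\pi/3$. So for some $k$ the linear term in $g_{12}$ is $\le -\frac12|\omega||g_{12}|$, and likewise for $g_{21}$. This confines $|g_{11}|-1$, $g_{12}$ and $g_{21}$ to $O(r)$. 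That is the check you defer; the paper does it exactly, without linearizing, via Lemma~\ref{lemma: basic inequality}.

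\textbf{Eisenstein case: the geometry.} This half has a genuine gap, beginning with the critical locus. By Theorem~\ref{thm:crilocus},
$\mathfrak{C}_K=\mathcal{R}\{u(z)\mathbb{Z}[j]^2: z\in\mathcal{F}\}\cup\mathcal{R}\{u^-(z)\mathbb{Z}[j]^2: z\in\mathcal{F}\}$
has real dimension $3$. It is not a finite union of orbits of a compact group: the torus $\{u(z)\mathbb{Z}[j]^2\}$ is a compact orbit of a non-compact unipotent group. Its two components cross along $\mathcal{R}\mathbb{Z}[j]^2$, and this crossing is what produces the logarithm.

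Over $u(z)\mathbb{Z}[j]^2$, the transverse $g_{21}$-direction has width $\asymp\min\{1,r/|z|\}$, because $(0,1)\mapsto(z,1+g_{21}z)$ shrinks at rate $|z|$. Integrating $\min\{1,r/|z|\}^2$ over $\mathcal{F}$ gives $r^2\log(1/r)$, so your scaling $C_r=rC_1$ fails exactly where the log arises. The right model is $\{|g_{12}g_{21}|\ll r\}$ in the two \emph{complex} variables $g_{12},g_{21}$, which are tangent to the two components, times the annulus $|g_{11}|=1+O(r)$. With real $(x,y)$, as you write it, the hyperbolic region has area $\asymp r\log(1/r)$. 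So your count $r\cdot r^2\log(1/r)$ does not follow from your own model.

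\textbf{Eisenstein case: the missing upper bound.} You must show that every $g$ with $|g_{11}|,|g_{22}|$ near $1$, $|g_{12}|<\frac{\sqrt3}{3}+2\epsilon_0$ and $|g_{21}|<2\epsilon_0$ satisfying the $\mathcal{S}_K$-constraints has $|g_{12}g_{21}|\ll r$ and $|g_{11}|<1+O(r)$. A Hessian at $\mathbb{Z}[j]^2$ does not give this. For the vectors $(1,u)$, $u\in\mathbb{Z}[j]^\times$, both coordinates have modulus $\approx 1$. Writing $g_{11}=1+a$, the linearized constraint is therefore $\max\{\Re a+\Re(ug_{12}),\,-\Re a+\Re(\bar u g_{21})\}\ge -r$. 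Its solution set in $(g_{12},g_{21})$ is an unbounded union of cones: it contains both tangent planes and an open cone on which $g_{12}g_{21}$ points backwards.

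What cuts this set down is the sign of $\Re(g_{12}g_{21})$ in $|g_{11}g_{22}|=|1+g_{12}g_{21}|$. One also needs a \emph{single} unit for which both coordinates drop at once, uniformly over the whole neighbourhood. The paper splits on the direction of $g_{12}g_{21}$:
\begin{itemize}
\item If it points backwards, $|g_{11}g_{22}|\le 1-0.02|g_{12}g_{21}|$ together with $\min\{|g_{11}|,|g_{22}|\}\ge e^{-r}$ already suffices.
\item Otherwise the unit is chosen by the angle-selection Lemma~\ref{lem:ropro}, which uses that $T^k\theta_1+T^{-k}\theta_2$ is independent of $k$.
\end{itemize}

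\textbf{Eisenstein case: the missing lower bound.} You also need an explicit inner set of measure $\gg r^3\log(1/r)$. The paper's $\underline{K}_r$ restricts $\mathrm{Arg}(g_{12})$ and $\mathrm{Arg}(g_{21})$ so that three of the six $(1,u)$-constraints hold through the first coordinate and three through the second. Your plan to ``compute the Hessian and identify the degenerate directions'' supplies neither this set nor the upper bound.
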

Theorem \ref{thm: measure estimates} provides asymptotic measure estimates for the neighborhood of the critical locus determined by $\Delta$. This result is of independent interest. The fact that the asymptotic formulas vary is once again due to the different description of the critical locus in these two cases. 

\medskip

{Similar measure estimates also arise naturally in the study of the uniform approximation problem in the real setting. More precisely, one is led  to  estimate the measure of  neighborhoods of certain critical loci in the homogeneous space $\SL_d(\R)/\SL_d(\Z)$. }
For $d=1$, this was obtained in the works \cite{StrombergssonVenkatesh2005,KelmerYu2020} based on Siegel's transform. For any $d\geq 1$,
this was done in \cite{KleinbockStrombergssonYu2022}, where the strategy is to bound these neighborhoods both from above and below by appropriate measurable sets in $X$, whose measures can be readily estimated. Achieving this is intricate and demands an understanding of the explicit structure of these neighborhoods. 

{For the proof of measure estimates in Theorem \ref{thm: measure estimates}, we adapt the approach in \cite{KleinbockStrombergssonYu2022}. However, the complex setting introduces new difficulties in characterizing the structure of $\Delta^{-1}[0,r]$ for  small $r>0$. In the real setting, the measure estimate relies crucially on the simple fact that a non-zero real number has either a positive or a negative sign. This dichotomy is not available in the complex setting, where the possible directions of  a non-zero complex number are parameterized by the unit circle. This extra freedom of parameters leads to several complexities in our analysis. Moreover, since the critical loci differ in the cases of $\Q(\sqrt{-1})$ and $\Q(\sqrt{-3})$, the structure of $\Delta^{-1}[0,r]$ is anticipated to be rather different, suggesting additional challenges. Despite these difficulties, we succeed in obtaining reasonable upper and lower bounds for $\Delta^{-1}[0,r]$ in both the Gaussian and Eisenstein cases. This is based on different key observations specific to each corresponding case, and by using a refined form of the triangle inequality in a delicate way, see Lemma \ref{lemma: basic inequality}. } This is done in \Cref{sec: measure estimates}.

\medskip

{Theorem \ref{thm: measure estimates} provides sharp measure estimates for the target sets arising in our shrinking target problem. To solve this shrinking target problem, however, one also needs dynamical input. A now-standard approach in homogeneous dynamics is to use effective single and double equidistribution results for suitable expanding unipotent orbits in the corresponding homogeneous space.}

{
In our setting, namely the homogeneous space
$\SL_2(\C)/\SL_2(\cO_K)$,
such effective equidistribution results are well known. The single equidistribution theorem can be obtained either by Margulis' thickening argument together with effective mixing estimates \cite{KleinbockMargulis1996,EskinMcMullen1993}, or by spectral methods \cite{Sodergren2012,Edwards17}. In this paper we use the version proved by Edwards \cite[Theorem 1']{Edwards17}. For the double equidistribution theorem, we follow the strategy of Kleinbock-Shi-Weiss \cite{KleinbockShiWeiss2017}.}

{These two effective equidistribution theorems are used in two main ways: first, to relate the probabilities of the hitting events to the measures of the corresponding shrinking targets; and second, to establish the quasi-independence of these hitting events, which is needed for a suitable divergence Borel-Cantelli lemma. However, when applying these effective results, we encounter smoothing issues similar to those arising in the real setting; see \cite[Remark 8]{KleinbockStrombergssonYu2022}. These issues prevent us from obtaining satisfactory control over certain short-range mixing terms associated with the hitting events.}

{To overcome this difficulty, we follow the strategy of the recent work of Str\"ombergsson and Yu \cite{StrombergssonYu2026} and establish a disjointness statement for our shrinking targets in an appropriate range. In the case \(K=\Q(\sqrt{-3})\), the critical locus has a structure analogous to that in the real setting, and the required disjointness statement can therefore be proved in essentially the same way. In contrast, when \(K=\Q(\sqrt{-1})\), this argument breaks down because the critical locus exhibits a different geometric behavior. Nevertheless, we are still able to prove the desired disjointness statement by using the crucial observation that, modulo a compact torus, the critical locus in this case is contained in a union of two closed geodesic orbits; see Lemma \ref{rmk:cloges} below. We prove the desired disjointness statement for both cases in Proposition \ref{prop:disjoint}. {Using these disjointness arguments together with the two effective equidistribution theorems, we are able to establish the two zero-one laws stated in Theorem \ref{thm: zero-one laws}.}

\bigskip

\noindent{\bf Notation and Conventions.} 
Throughout the paper, $\norm{\cdot}$ denotes the supremum norm, either on $\C^2$ or on $\mathrm{M}_2(\C)$, depending on the context. For $r>0$, let $\B_{\C^2}(r)$ be the open $r$-ball centered at the origin in $\C^2$, and let $\B_G(r)$ be the open $r$-ball centered at the identity element in $G=\SL_2(\C)$, both with respect to $\norm{\cdot}$.
For two positive quantities $A$ and $B$, we write ``$A\ll B$" to denote $A\leq cB$ for some constant $c>0$, and we write
``$A\asymp B$" to denote $A\ll B\ll A$. We will also write ``$O(A)$" to denote any real number $E$ satisfying $|E|\leq cA$ for some constant $c>0$. We will sometimes use subscripts to indicate the dependence of the implicit constant (i.e., the constant $c$ above) on parameters.

\medskip

\noindent{\bf Structure of the paper.}  In Section \ref{sec: geometry of numbers}, we introduce preliminaries on geometry of numbers and, following the work of Chevallier and Minkowski, provide an explicit description for the critical locus in the Gaussian and Eisenstein cases. {We also prove Theorem \ref{thm:dirichlet} and discuss the relations between Dirichlet improvability and badly approximability in the Gaussian and Eisenstein cases.} In Section \ref{sec: measure estimates}, we prove the desired measure estimates in Theorem \ref{thm: measure estimates} for both the Gaussian and Eisenstein cases. In Section \ref{sec: effective equidistribution}, we recall the effective single equidistribution theorem (see Theorem \ref{thm: single equidistribution}), and give a self-contained proof of the effective double equidistribution theorem (Theorem \ref{thm:effdoubequi}). In Section \ref{sec: proof of zero-one laws}, we finish the proof of zero-one laws in Theorem \ref{thm: zero-one laws}. In particular, {we obtain the desired disjointness statements in Proposition \ref{prop:disjoint} which are then used to take care of the divergence cases of the zero-one laws.}

\medskip

\noindent{\bf Acknowledgements.} H.Z. would like to thank Yubin He for valuable discussions on Hurwitz continued fraction theory.

\bigskip

\section{Geometry of numbers for imaginary quadratic fields}\label{sec: geometry of numbers}

Let $K=\Q(\sqrt{-d})$ and let $\cO_K$  be its ring of integers, that is, $\cO_K=\Z[\sigma]$, where  $\sigma=\sqrt{-d}$ if $d\equiv 1, 2 \Mod{4}$ and $\sigma=\frac{1+\sqrt{-d}}{2}$ if $d\equiv 3\Mod{4}$.  Let $G=\SL_2(\C)$ and $\G_K=\SL_2(\cO_K)$. {For later purposes, we introduce the following group elements in $G$: 
Let 
\begin{align}\label{equ:gsdef}
g_s:=\begin{pmatrix}
        e^s & 0\\
         0 & e^{-s}
    \end{pmatrix},\qquad  (s\in \R)
\end{align}
and let 
\begin{align}\label{equ:uzpm}
    u(z):=\begin{pmatrix}
        1 & z\\
         0 & 1
    \end{pmatrix},\quad u(z)^-:=\begin{pmatrix}
        1 & 0\\
         z & 1
    \end{pmatrix},\qquad (z\in \C).
\end{align}}

The homogeneous space $X:=G/\G_K$ parameterizes the following space
\begin{align*}
\left\{g\cO_K^2: g\in G\right\},
\end{align*}
which is the space of {free, discrete, cocompact} $\cO_K$-submodules of $\C^2$ that admit an ordered basis in $\C^2$ with determinant $1$. {Using the identification between $\C^2$ and $\R^4$, elements in $X$ can be viewed as lattices in $\R^4$. More precisely, as a lattice in $\R^4$, the integral lattice $\cO_K^2$ has a basis 
$$\left\{(1,0,0,0)^t, (\Re(\sigma),\Im(\sigma), 0,0)^t, (0,0,1,0)^t, (0,0,\Re(\sigma),\Im(\sigma))^t\right\}.
$$ 
Since elements in $G$ preserves the covolume,  we have 
\begin{align}\label{equ:covol}
\mathrm{covol}(\Lambda)=\mathrm{covol}(\cO_K^2)=\Im(\sigma)^2=\frac{|D_K|}{4},\quad \forall\, \Lambda\in X.
\end{align}
Here $D_K=-4d$ if $d\equiv 1, 2\Mod{4}$ and $D_K=-d$ if $d\equiv 3\Mod{4}$ is the \textit{discriminant} of $K$.} Further note that $X$ is noncompact in view of a Mahler's compactness criterion in this setting \cite[Theorem~1.1]{KleinbockShiTomanov17}.

Let $\|\cdot\|$ be the supremum norm on $\C^2$. For any $r>0$, let
$$
\B_{\C^2}(r):=\{\bm{v}\in \C^2: \|\bm{v}\|<r\}.
$$ 
For any lattice $\Lambda\in X$, define 
\begin{align*}
\sys(\Lambda):=\min\{\|\bm{v}\|: \bm{v}\in \Lambda\smallsetminus\{\bm{0}\}\}. 
\end{align*}
The  \textit{critical radius} of the space $X$ with respect to the supremum norm is defined by 
\begin{align}\label{def:rklambda}
r_K:=\inf\{r>0: \Lambda\cap \B_{\C^2}(r)\neq \{\bm{0}\},\, \forall\, \Lambda\in X\}=\sup\{\sys(\Lambda): \Lambda\in X\}.
\end{align}
Thus, if $r> r_K$ then $B_{\C^2}(r)$ intersects every lattice $\Lambda \in X$ in a nonzero point.
On the other hand, if $r < r_K$ there must exist some $\Lambda \in X$ for which $\Lambda \cap B_{\C^2}(r) = \{\bm{0}\}$.
The corresponding \textit{critical locus} is defined as 
\begin{align}\label{def:criticallocus}
\fC_K:=\{\Lambda\in X: \Lambda\cap \B_{\C^2}(r_K)=\{\bm{0}\}\}. 
\end{align}
It is nonempty and compact by Mahler's criterion. We define the continuous function
\begin{align}\label{def:deltaalt}
\Delta: X\to [0,\infty),\quad \Lambda\mapsto 
\log\left(\tfrac{r_K}{\text{sys}(\Lambda)}\right).
\end{align}
{We will see below that $c_K=r_K^2$ (see \eqref{equ:costrel}), thus this definition is equivalent to the definition given in \eqref{def:delta}.} By definition we have $\Delta^{-1}\{0\}=\fC_K$. Note also that for any $r>0$,
\begin{align}\label{equ:svec}
\Delta^{-1}[0,r]=\{ \Lambda\in X: \sys(\Lambda)\geq r_Ke^{-r}\}.
\end{align}

\medskip

\begin{Lem}\label{lem:criticallocus}
The map $\Delta: X\to [0,\infty)$ is proper. In particular, for any $r\geq 0$, $\Delta^{-1}[0,r]$ is a {nonempty} compact subset of $X$. 
\end{Lem}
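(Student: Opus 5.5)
The plan is to reduce properness to Mahler's compactness criterion for $X$, as recorded in \cite[Theorem~1.1]{KleinbockShiTomanov17}, and then check non-emptiness separately.

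\textbf{Properness.} A subset $S\subset X$ is relatively compact if and only if $\inf_{\Lambda\in S}\sys(\Lambda)>0$. We will use this after viewing elements of $X$ as lattices in $\R^4$ of fixed covolume $|D_K|/4$ (see \eqref{equ:covol}). The supremum norm on $\C^2$ is comparable to the Euclidean norm on $\R^4$, so this is the usual criterion. Since $\Delta$ is continuous, preimages of closed sets are closed, so it suffices to show that $\Delta^{-1}[0,r]$ is relatively compact for every $r\ge 0$. Any compact subset of $[0,\infty)$ is contained in some $[0,r]$, so this gives properness.

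By \eqref{equ:svec}, every $\Lambda\in\Delta^{-1}[0,r]$ satisfies $\sys(\Lambda)\ge r_Ke^{-r}$. Mahler's criterion therefore gives relative compactness, provided $r_K>0$. The point $r_K>0$ is immediate, since $\sys(\cO_K^2)>0$ because $\cO_K^2$ is discrete. We should also record that $r_K<\infty$, so that $\Delta$ is real-valued. This follows from Minkowski's convex body theorem: the ball $\B_{\C^2}(r)$ is a convex symmetric body in $\R^4$ of volume $\asymp r^4$, so it contains a nonzero lattice point once $r^4\gg |D_K|$. In particular $\sys(\Lambda)\le C_K$ uniformly in $\Lambda$.

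Continuity of $\sys$, which underlies the continuity of $\Delta$, follows from the standard fact that $g\mapsto \sys(g\cO_K^2)$ is continuous. For $h$ near the identity, $\sys(hg\cO_K^2)$ lies between $\|h^{-1}\|^{-1}\sys(g\cO_K^2)$ and $\|h\|\sys(g\cO_K^2)$, up to the norm comparison constants.

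\textbf{Non-emptiness.} It suffices to show $\fC_K=\Delta^{-1}\{0\}\neq\emptyset$, that is, that the supremum defining $r_K$ is attained. Choose $\Lambda_n\in X$ with $\sys(\Lambda_n)\to r_K$. For large $n$ we have $\sys(\Lambda_n)\ge r_K/2$, so by Mahler's criterion the sequence has a subsequence converging in $X$ to some $\Lambda$. By continuity of $\sys$, $\sys(\Lambda)=r_K$, so $\Lambda\in\fC_K\subset\Delta^{-1}[0,r]$ for every $r\ge0$.

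\textbf{Main obstacle.} The main subtlety is making sure Mahler's criterion applies in the form we need for $G/\G_K$. A sequence in $X$ that avoids small vectors must converge in $X$ itself, not merely in the space of all unimodular lattices in $\R^4$. This is exactly the content of the cited result of Kleinbock--Shi--Tomanov. Alternatively, one can note that $X$ embeds as a closed subset of the space of lattices in $\R^4$ of covolume $|D_K|/4$, since $\SL_2(\C)$-orbits of $\cO_K^2$ are closed there ($\G_K$ being a lattice). The classical Mahler criterion then applies directly.
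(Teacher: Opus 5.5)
Your proof is correct and follows the paper's argument. Closedness comes from continuity of $\Delta$, precompactness from \eqref{equ:svec} together with the Kleinbock--Shi--Tomanov form of Mahler's criterion, and nonemptiness of $\mathfrak{C}_K$ from a convergent subsequence of lattices whose systoles tend to $r_K$ (the paper phrases this as $\Lambda_n\in\Delta^{-1}[0,r_n]$ with $r_n\to 0$). Your extra checks that $0<r_K<\infty$ and that $\sys$ is continuous are things the paper takes for granted; for the continuity step, take $\|h\|$ to be the operator norm for the supremum norm on $\mathbb{C}^2$, which tends to $1$ as $h$ tends to the identity, rather than allowing fixed comparison constants.
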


\begin{proof}
Since $\Delta$ is continuous, for any $r\geq 0$, the set $\Delta^{-1}[0,r]$ is a closed subset of $X$. Moreover, by a Mahler's compactness criterion in this setting \cite[Theorem~1.1]{KleinbockShiTomanov17} and the expression \eqref{equ:svec} we see that $\Delta^{-1}[0,r]$ is also a precompact subset of $X$. Thus $\Delta^{-1}[0,r]$ is a compact subset of $X$. 

To show nonemptiness, we only need to show $\frak{C}_K=\Delta^{-1}\{0\}$ is nonempty. Take a decreasing sequence $\{r_n\}_{n\in \N}$ of positive numbers  with $\lim_{n\to\infty}r_n=0$. For each $n\in \N$, take $\Lambda_n\in \Delta^{-1}[0, r_n]$. Then by compactness of $\Delta^{-1}[0, r_1]$, by passing to a subsequence we have $\lim_{n\to\infty}\Lambda_n=\Lambda$ for some $\Lambda\in \Delta^{-1}[0, r_1]$. Since $\Delta$ is continuous, we have $\Delta(\Lambda)=\lim_{n\to\infty}\Delta(\Lambda_n)\leq \lim_{n\to\infty}r_n=0$. This implies that $\Lambda\in \Delta^{-1}\{0\}$ and thus $\Delta^{-1}\{0\}$ is nonempty. 
\end{proof}

\medskip

\subsection{Critical locus for Gaussian and Eisenstein integers}\label{sec:criloc}
In this section we give the explicit description of the critical locus when $K=\Q(\sqrt{-1})$ and $K=\Q(\sqrt{-3})$. 
Let 
$$
\cR:=\left\{R_{\theta}:=\begin{pmatrix}
e^{i\theta} & 0\\
0 &e^{-i\theta}\end{pmatrix}: \theta\in [0, 2\pi)\right\},
$$
and let 
$$
w_0:=\begin{pmatrix} 0 & 1\\ -1 & 0\end{pmatrix}.
$$ 
Note that since $\|\cdot\|$ is invariant under the left multiplications of $R_{\theta}$ and $w_0$,  it is invariant under the (disconnected) subgroup 
$
\tilde{\cR}:=\la \cR, w_0\ra
$
generated by $\cR$ and $w_0$. Hence any $\Delta$-level set is also $\tilde{\cR}$-invariant. In particular, $\fC_K=\Delta^{-1}\{0\}$ is  $\tilde{\cR}$-invariant. 
\begin{Thm}[{ \cite[Chapter 6]{MinkowskiDA}}]\label{thm:crilocus}
Assume $K=\Q(\sqrt{-1})$, then we have
\begin{align}\label{equ:-1}
r_K=\sqrt{\tfrac{\sqrt{2}}{3-\sqrt{3}}}= 1.0561 \cdots\quad \text{and}\quad \mathfrak{C}_K=\tilde{\cR} h_0 \mathcal{O}_K^2, 
\end{align}
where 
\begin{align} \label{eq:h_0}
h_0:=r_K\begin{pmatrix}
1 &  \omega\\
-i\omega e^{-\frac{\pi}{12}i} & e^{-\frac{\pi}{12}i}\end{pmatrix}
\end{align}
with $\omega=\frac12+(1-\frac{\sqrt{3}}{2})i$. 

Assume $K=\Q(\sqrt{-3})$, then we have
\begin{align}\label{equ:-3}
r_K=1\quad \text{and}\quad \fC_K=\tilde{\cR} \left\{ u(z)\mathcal{O}_K^2: z\in \cF\right\} .
\end{align}
where $\cF\subset \C$ is a fundamental domain for $\C/\mathcal{O}_K$ and $u(z)$ is as given in \eqref{equ:uzpm}.
\end{Thm}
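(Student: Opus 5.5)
The plan is to treat the two fields separately. In each case I would first prove that every $\Lambda\in X$ has a nonzero vector of sup norm at most the claimed $r_K$, which gives $r_K\le$ claimed value. Then I would check that the listed lattices have systole exactly that value, which gives equality. Finally I would show that any lattice attaining the bound is one of the listed ones.

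\textbf{Eisenstein case ($r_K=1$).} The covolume is $3/4$ by \eqref{equ:covol}. The polydisc $\{|z_1|\le 1,|z_2|\le 1\}$ has volume $\pi^2$, so Minkowski's theorem alone gives only a weaker bound. Instead I would argue directly via reduction theory.
\begin{itemize}
\item Acting by $\tilde{\cR}$ and $\Gamma_K$, take a shortest vector $\bm v$ of $\Lambda$, and let $a:=\|\bm v\|$.
\item Since $\cO_K$ is a PID, $\bm v$ can be chosen primitive in the $\cO_K$-module sense. Complete it to a basis $\bm v,\bm w$ with $\det(\bm v,\bm w)=1$.
\item Write $\Lambda=g\,\cO_K^2$, where $g$ is upper triangular up to a rotation. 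This uses Iwasawa coordinates $g=k\,u(z)\,\mathrm{diag}(\alpha,\alpha^{-1})$.
\item Modify $\bm w$ by $\cO_K\bm v$. Because the Eisenstein lattice has covering radius $1/\sqrt3$, this makes the component of $\bm w$ along $\bm v$ small.
\item A direct estimate then shows that some vector among $\bm v,\bm w,\bm w\pm\bm v,\ldots$ has sup norm at most $1$.
\end{itemize}
Equality forces $|\alpha|=1$ and the unipotent structure, which yields exactly $\tilde{\cR}\{u(z)\cO_K^2\}$.

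That $u(z)\cO_K^2$ has systole $1$ is immediate: for $(p+zq,q)$, either $q\ne0$, so $|q|\ge1$, or $q=0$ and $|p|\ge1$.

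\textbf{Gaussian case.} The value and the critical lattice are Minkowski's classical result; Chevallier \cite{Chevallier2021} also computed $c_{\Q(\sqrt{-1})}$. I would cite Minkowski \cite[Chapter 6]{MinkowskiDA} for the extremal problem of the critical determinant of the body $|z_1|\le1,|z_2|\le1$ among $\Z[i]$-modules. Only two things would then remain to verify.
\begin{itemize}
\item That $h_0\cO_K^2$ has systole $r_K$. This is a finite check over small $(p,q)$, using the explicit $\omega$ and the phase $e^{-\pi i/12}$.
\item That the normalization $r_K^2=\sqrt2/(3-\sqrt3)$ matches determinant $1$.
\end{itemize}

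The uniqueness up to $\tilde{\cR}$ and $\Gamma_K$ is the main obstacle. Minkowski's argument shows that an extremal lattice must have many vectors on the boundary of the polydisc: at least three independent $\cO_K$-directions with $|z_1|=r_K$ or $|z_2|=r_K$. Solving those equalities pins down $h_0$.

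I would follow that route, using the $\tilde{\cR}$ symmetry to normalize a first boundary vector to $(r_K,\ast)$. A local perturbation argument would then show that if fewer constraints are tight, $\sys$ can be increased. Doing this rigorously in the complex case is delicate, so in practice I would rely on Minkowski's classification as stated.
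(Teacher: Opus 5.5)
\textbf{Eisenstein case.} Your lower bound is fine: every $u(z)\cO_K^2$ has systole exactly $1$. The two substantive claims, however, are that $r_K\le 1$ and that no other lattice attains equality. These are precisely Minkowski's theorem, and your sketch does not prove them.

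\begin{itemize}
\item The sup norm is invariant only under $\tilde{\cR}$, not under $\mathrm{SU}(2)$. So writing $g=k\,u(z)\,\mathrm{diag}(\alpha,\alpha^{-1})$ and then ignoring $k$ is not legitimate.
\item The reduction you describe does not reach $1$. Reducing $\bm w$ modulo $\cO_K\bm v$ (covering radius $1/\sqrt3$) and using $|\det(\bm v,\bm w)|=1$ gives
$\|\bm v\|^2\le\|\bm w\|_2^2\le\tfrac13\|\bm v\|_2^2+\|\bm v\|_2^{-2}$, where $\|\bm v\|^2\le\|\bm v\|_2^2\le 2\|\bm v\|^2$. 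This yields only $\sys(\Lambda)^4\le 3/2$. For comparison, Minkowski's convex body theorem gives $(12/\pi^2)^{1/4}\approx 1.05$.
\item The ``$\ldots$'' in ``$\bm v,\bm w,\bm w\pm\bm v,\ldots$'' hides the entire difficulty.
\item ``Equality forces $|\alpha|=1$ and the unipotent structure'' is unargued. You would have to show that, for a suitable basis, $k\in\tilde{\cR}$. For the second family $u^-(z)\cO_K^2=w_0u(-z)\cO_K^2$ one needs $k=w_0$.
\end{itemize}

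The paper makes no such attempt. It quotes Minkowski (minimal admissible covolume $\cV_K=3/4$, extremal modules $\tilde{\cR}\{u(z)\cO_K^2\}$). It then only needs $r_K=(|D_K|/4\cV_K)^{1/4}=1$ and $\det u(z)=1$.

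\textbf{Gaussian case.} Citing Minkowski is also what the paper does, but his theorem is not stated in the form you need. Verifying $\det h_0=1$ and $\sys(h_0\Z[i]^2)=r_K$ proves only $\tilde{\cR}h_0\Z[i]^2\subseteq\fC_K$.

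Minkowski classifies admissible modules $g\Z[i]^2$, $g\in\GL_2(\C)$, of minimal covolume $\cV_K=\bigl(\tfrac{3-\sqrt3}{\sqrt2}\bigr)^2$. The extremal set is the two-parameter family $\{\mathrm{diag}(e^{is},e^{it})M\Z[i]^2\}$ for an explicit $M\neq h_0$. On $X$, by contrast, only $\cR$ and $w_0$ act. The reverse inclusion needs three steps:
\begin{itemize}
\item Impose $\det\bigl(r_K\,\mathrm{diag}(e^{is},e^{it})M\bigr)\in\{\pm1,\pm i\}$. Since $\det M=r_K^{-2}e^{-\pi i/12}$, this leaves the $\cR$-orbits of $r_Ke^{(\pi/24+k\pi/4)i}M\Z[i]^2$ for $k=0,1,2,3$. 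These reduce to $k=0,1$ because $i\Z[i]^2=\Z[i]^2$.
\item Use $\mathrm{diag}(1,i)\,M\,\mathrm{diag}(1,-i)=w_0Mw_0^{-1}$ to see that the $k=1$ orbit is $w_0$ applied to the $k=0$ orbit. Hence $\fC_K$ is a single $\tilde{\cR}$-orbit.
\item Use $h_0=r_Ke^{\pi i/24}\,\mathrm{diag}(e^{\pi i/8},e^{-\pi i/8})\,M\left(\begin{smallmatrix}1&0\\-i&1\end{smallmatrix}\right)$ to identify that orbit with $\tilde{\cR}h_0\Z[i]^2$.
\end{itemize}
This translation is the actual content of the paper's proof, and it is absent from your plan. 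The boundary-vector/perturbation route you mention is unnecessary once Minkowski is quoted properly; as sketched, it is not a proof.
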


\begin{proof}
This has essentially been proved by Minkowski in \cite[Satz LXII and LXIV]{MinkowskiDA} 
    but requires clarification on notation and conventions. Let $K$ be an imaginary quadratic field. Minkowski considered the space of free cocompact discrete $\cO_K$-submodules of $\C^2$ which can be parameterized by the homogeneous space $\GL_2(\C)/\GL_2(\cO_K)$ via $g\GL_2(\cO_K)\mapsto g\cO_K^2$. Following Minkowski's notation, we say $\Lambda\in \GL_2(\C)/\GL_2(\cO_K)$ is \textit{admissible} if $\Lambda\cap \B_{\C^2}(1)=\{\bm{0}\}$. Define
    \begin{align*}
\cV_K:=\inf\{\mathrm{covol}(\Lambda): \text{$\Lambda\in \GL_2(\C)/\GL_2(\cO_K)$ is admissible}\}
        \end{align*}
and
\begin{align*}
\tilde{\frak{C}}_K:=\{\Lambda \in \GL_2(\C)/\GL_2(\cO_K): \text{$\Lambda$ is admissible and $\mathrm{covol}(\Lambda)=\cV_K$}\}.
\end{align*}
 Here the covolume is computed viewing $\Lambda\in \GL_2(\C)/\GL_2(\cO_K)$ as a lattice in $\R^4$ via the natural identification between $\C^2$ and $\R^4$.
 Let $\cO_K^{\times}$ be the set of units in $\cO_K$.  Using the observation that lattices in $X=\SL_2(\C)/\SL_2(\cO_K)$ are of covolume $|D_K|/4$ with $D_K$ the discriminant of $K$ {(see \eqref{equ:covol})}, and the fact that lattices $\Lambda=g\cO_K^2\in \GL_2(\C)/\GL_2(\cO_K)$  are representable as elements in $X$ if and only if $\det(g)\in \cO_K^{\times}$, we see that 
 these two objects are related to our critical radius and locus via the following relations:
 \begin{align*}
     r_K=\left(\frac{|D_K|}{4\cV_K}\right)^{1/4}
 \quad
 \text{and} 
 \quad
     \frak{C}_K=\left\{r_K\Lambda: \Lambda=g\cO_K^2\in \tilde{\frak{C}}_K,\, \det(g)\in \cO_K^{\times} \right\}.
 \end{align*}
 
 Now for the case when $K=\Q(\sqrt{-1})$, Minkowski computed that $\cV_K=\left(\frac{3-\sqrt{3}}{\sqrt{2}}\right)^2$ and 
 \begin{align}\label{eq:MinkowskiGLlocus}
     \tilde{\frak{C}}_K=   \left\lbrace \begin{pmatrix} e^{is} & 0 \\ 0 & e^{it} \end{pmatrix}
    M\mathcal{O}_K^2 : s, t \in \mathbb{R}  \right\rbrace,\qquad \text{with $M:=\begin{pmatrix} 1 & -i-j^2 \\ 1-ij^2 & 1 \end{pmatrix}$}.
 \end{align}
 Here $j=e^{\frac{2\pi}{3}i}=-\frac12+\frac{\sqrt{3}}{2}i$ as before.
 Note also that in this case $D_K=-4$ and $\cO_K^{\times}=\{\pm 1,\pm i\}$ and so $r_K=\cV_K^{-1/4}=\sqrt{\tfrac{\sqrt{2}}{3-\sqrt{3}}}$ as claimed in \eqref{equ:-1}. Moreover, define 
 $$
 g_{s,t}=r_K\begin{pmatrix} e^{it} & 0\\ 0 & e^{it} 
\end{pmatrix} \begin{pmatrix} e^{is} & 0\\ 0 & e^{-is} 
\end{pmatrix}M,\qquad (s,t\in \R).
 $$
Then we have 
 \begin{align*}
     \frak{C}_K=\left\{g_{s,t}\cO_K^2: s,t\in \R,\, \det(g_{s,t})\in \{\pm 1,\pm i\}\right\}.
 \end{align*}
 To further compute the set $\frak{C}_K$, note that by a direct computation we have 
 $
 \det (M)= r_K^{-2}e^{-\frac{\pi}{12}i}$
 and thus 
 $$
 \det(g_{s,t})=e^{2it-\frac{\pi }{12}i},\qquad (s,t\in \R).
 $$ 
 Thus in order to have $\det(g_{s,t})\in \{\pm 1,\pm i\}=\{e^{\frac{k\pi}{2}i}: k=0,1,2,3\}$, we need $2it-\frac{\pi}{12}i=\frac{k\pi}{2}i$ for some $k=0,1,2,3$, i.e. $t=\frac{\pi}{24}+\frac{k\pi}{4}$ for some $k=0,1,2,3$. We thus have 
\begin{align*}
\mathfrak{C}_K=\bigcup_{k=0}^3 \cR r_Ke^{(\frac{\pi}{24}+\frac{k\pi}{4})i} M\mathcal{O}_K^2=\bigcup_{k=0}^1 \cR r_Ke^{(\frac{\pi}{24}+\frac{k\pi}{4})i} M\mathcal{O}_K^2.
\end{align*}
Here for the second identity we used that $e^{\frac{\pi}{2} i}\cO_K^2=i\cO_K^2=\cO_K^2$.
Now note that 
\begin{align*}
    e^{\frac{\pi}{4}i}\begin{pmatrix} e^{-\frac{\pi}{4}i} & 0\\ 0 & e^{\frac{\pi}{4}i}\end{pmatrix}M\begin{pmatrix} 1& 0\\ 0 &-i\end{pmatrix}=\begin{pmatrix} 1 & 0\\ 0 & i\end{pmatrix}M\begin{pmatrix} 1& 0\\ 0 &-i\end{pmatrix}= w_0Mw_0^{-1}.
\end{align*}
From this we see that 
\begin{align*}
    \frak{C}_K=\tilde{\cR}r_Ke^{\frac{\pi}{24}i} M\mathcal{O}_K^2.
\end{align*}
Finally, recall $h_0$ is given as in \eqref{eq:h_0}. One verifies the following identity
\begin{align*}
h_0=r_Ke^{\frac{\pi}{24}i}\begin{pmatrix}e^{\frac{\pi}{8}i} & 0\\ 0 & e^{-\frac{\pi}{8}i}\end{pmatrix} M\begin{pmatrix} 1& 0 \\
-i & 1\end{pmatrix},
\end{align*}
from which we get
\begin{align*}
\frak{C}_K=\tilde{\cR} e^{\frac{\pi}{24}i}M\mathcal{O}_K^2=\tilde{\cR} h_0\cO_K^2
\end{align*}
as desired.

For the case when $K=\Q(\sqrt{-3})$, Minkowski computed that $\cV_K=\frac{3}{4}$ and 
\begin{align*}
    \tilde{\frak{C}}_K=\tilde{\cR}\{u(z)\cO_K^2: z\in \cF\}
\end{align*}
with $\cF\subset \C$ a fundamental domain for $\C/\cO_K$. Note that in this case $D_K=-3$ and hence $r_K=1$. Moreover, since $\det(u(z))=1\in \cO_K^{\times}$ for all $z\in \C$, we clearly have in this case $\frak{C}_K=\tilde{\frak{C}}_K$ as claimed in \eqref{equ:-3}. 
\end{proof}

For later purpose, it will be more convenient to describe the critical locus as orbits of the connected group $\cR$. 
 When $K=\Q(\sqrt{-1})$,
\begin{align}\label{equ:ckr-1}
\mathfrak{C}_K=\cR h_0 \mathcal{O}_K^2 \bigcup \cR h_1 \mathcal{O}_K^2,
\end{align}
with
\begin{align}\label{equ:h1}
    h_1  = w_0 h_0 w_0^{-1} = 
    r_K\begin{pmatrix} e^{-\frac{i\pi}{12}} & i\omega e^{-\frac{i\pi}{12}}\\  -\omega & 1 
\end{pmatrix}.
\end{align}
One can verify that the above union is in fact disjoint.
When $K=\Q(\sqrt{-3})$, 
\begin{align}\label{equ:ckr-3}
\mathfrak{C}_K=\cR\left\{ u(z)\mathcal{O}_K^2 : z\in \cF\right\} \cup \cR \left\{ u^-(z) \mathcal{O}_K^2 : z\in \cF\right\},
\end{align}
where $u^-(z)=\begin{pmatrix} 1 & 0\\ z & 1\end{pmatrix}$ is as in \eqref{equ:uzpm}.

We have the following remarkable property showing that every lattice in $\frak{C}_{\Q(\sqrt{-1})}$  lies on some periodic $\{g_s\}$-orbit.
\begin{Lem}\label{rmk:cloges}
    Let $K=\Q(\sqrt{-1})$. {Then for any $\Lambda\in \frak{C}_K$ we have 
    \begin{align}\label{equ:cloges}
        g_{\tau}\Lambda=\Lambda,\quad \text{with $\tau=\log(2+\sqrt{3})$}.
    \end{align}}
\end{Lem}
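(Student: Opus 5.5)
The plan is to reduce \eqref{equ:cloges} to a single matrix identity and then verify that identity by direct computation.

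\textbf{Step 1: reduction to one lattice.} By \eqref{equ:ckr-1} every $\Lambda\in\frak C_K$ has the form $R_\theta h_0\cO_K^2$ or $R_\theta h_1\cO_K^2$, with $h_1=w_0h_0w_0^{-1}$ as in \eqref{equ:h1}. We use two elementary facts about $g_\tau$:
\begin{itemize}
\item $g_\tau$ commutes with every $R_\theta$, since both are diagonal;
\item $w_0^{-1}g_\tau w_0=g_{-\tau}$.
\end{itemize}
Moreover $h_1\cO_K^2=w_0h_0w_0^{-1}\cO_K^2=w_0h_0\cO_K^2$, because $w_0\in\Gamma_K$. Hence
$$g_\tau R_\theta h_1\cO_K^2=R_\theta w_0\, g_{-\tau}h_0\cO_K^2 .$$
So it suffices to show that $g_{\pm\tau}h_0\cO_K^2=h_0\cO_K^2$. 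Since $g_{-\tau}=g_\tau^{-1}$, this reduces to the single claim
$$h_0^{-1}g_\tau h_0\in\Gamma_K=\SL_2(\Z[i]).$$

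\textbf{Step 2: simplify the conjugating matrix.} Rather than work with $h_0$ directly, we use the factorization from the proof of Theorem \ref{thm:crilocus}:
$$h_0=r_Ke^{\frac{\pi}{24}i}\,D\,M\,u^-(-i),\qquad D=\mathrm{diag}\!\left(e^{\frac{\pi}{8}i},e^{-\frac{\pi}{8}i}\right).$$
\begin{itemize}
\item The scalar $r_Ke^{\frac{\pi}{24}i}$ cancels under conjugation.
\item $D$ commutes with $g_\tau$.
\item $u^-(-i)\in\Gamma_K$.
\end{itemize}
Therefore the claim is equivalent to
$$M^{-1}g_\tau M\in\SL_2(\Z[i]),\qquad M=\begin{pmatrix}1&-i-j^2\\1-ij^2&1\end{pmatrix},\quad j=e^{\frac{2\pi}{3}i}.$$
The determinant of $M^{-1}g_\tau M$ is automatically $1$, so only integrality of the entries needs checking.

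\textbf{Step 3: the computation.} Write $M=\begin{pmatrix}1&\beta\\\gamma&1\end{pmatrix}$ with $\beta=-i-j^2$ and $\gamma=1-ij^2$, and set $\lambda=e^\tau=2+\sqrt3$, so that $\lambda^{-1}=2-\sqrt3$. Then
$$M^{-1}g_\tau M=\frac{1}{1-\beta\gamma}\begin{pmatrix}\lambda-\beta\gamma\lambda^{-1}&\beta(\lambda-\lambda^{-1})\\-\gamma(\lambda-\lambda^{-1})&\lambda^{-1}-\beta\gamma\lambda\end{pmatrix}.$$
Using $j^2=-\tfrac12-\tfrac{\sqrt3}{2}i$ and $\lambda-\lambda^{-1}=2\sqrt3$, each entry is an element of $\Q(i,\sqrt3)$. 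The task is to show that every $\sqrt3$-component cancels and that the remaining rational parts are integers in $\Z[i]$.

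Two consistency checks guide the algebra:
\begin{itemize}
\item the trace must equal $\lambda+\lambda^{-1}=4$;
\item both off-diagonal entries must be Gaussian integers.
\end{itemize}
The latter amounts to showing that $2\sqrt3\,\beta/(1-\beta\gamma)$ and $2\sqrt3\,\gamma/(1-\beta\gamma)$ lie in $\Z[i]$.

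\textbf{Main obstacle.} The only real work is this algebraic simplification in $\Q(i,\sqrt3)$. I would first compute $1-\beta\gamma=\det M=r_K^{-2}e^{-\frac{\pi}{12}i}$, which the paper already records. Its inverse is then $r_K^2e^{\frac{\pi}{12}i}$, with $r_K^2=\sqrt2/(3-\sqrt3)$ and $e^{\frac{\pi}{12}i}=\tfrac{\sqrt6+\sqrt2}{4}+\tfrac{\sqrt6-\sqrt2}{4}i$. Rationalizing the resulting expressions should exhibit the cancellation explicitly; the $\sqrt2$ factors coming from $r_K^2$ and from $e^{\frac{\pi}{12}i}$ should combine to give rational multiples of $1$ and $\sqrt3$.

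If an entry turns out not to be integral, that would indicate $\tau$ should be replaced by a multiple of itself. The statement as given, however, suggests that $\lambda=2+\sqrt3$ is precisely the fundamental unit adapted to $\Q(\sqrt3)$ appearing in $M$, so I expect the check to succeed.
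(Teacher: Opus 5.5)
Your Steps 1 and 2 are correct and follow the paper's route. The paper likewise reduces to $h_0\cO_K^2$ by commuting $g_\tau$ with $\cR$. It treats $h_1\cO_K^2$ through $h_1=w_0h_0w_0^{-1}$, $w_0^{-1}g_\tau w_0=g_{-\tau}$ and $w_0\cO_K^2=\cO_K^2$. Your further passage to $M$ via $h_0=r_Ke^{\frac{\pi}{24}i}DMu^-(-i)$ is also valid.

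The gap is Step 3: you never actually compute $M^{-1}g_\tau M$. You only say you expect integrality, and you even leave open that $\tau$ might have to be replaced by a multiple. The lemma is precisely the statement that this conjugate lies in $\SL_2(\Z[i])$, so the proposal stops where the proof begins. The paper's proof consists of the explicit identity $g_\tau h_0=h_0\begin{pmatrix}4-i&2\\2i&i\end{pmatrix}$ with the right-hand matrix in $\Gamma_K$.

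The missing computation is short once you notice the structure of $M$. With $\beta=-i-j^2=\tfrac12+(\tfrac{\sqrt3}{2}-1)i=\overline{\omega}$, one has $\gamma=1-ij^2=i\beta$. Moreover $\det M=1-i\beta^2=\sqrt3\,\beta$, which is the recorded value $r_K^{-2}e^{-\frac{\pi}{12}i}$.

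Hence in your formula the off-diagonal entries are $2\sqrt3\beta/(\sqrt3\beta)=2$ and $-2\sqrt3\, i\beta/(\sqrt3\beta)=-2i$. Using $\beta\gamma=1-\sqrt3\beta$ and $\lambda-\lambda^{-1}=2\sqrt3$, the diagonal entries are $\lambda^{-1}+2/\beta$ and $\lambda-2/\beta$. Since $|\beta|^2=2-\sqrt3=\lambda^{-1}$, we get $2/\beta=2\lambda\overline{\beta}=\lambda+i$, so
\[
M^{-1}g_\tau M=\begin{pmatrix}4+i&2\\-2i&-i\end{pmatrix}\in\SL_2(\Z[i]).
\]
Conjugating back by $u^-(-i)$ gives exactly the paper's matrix $\begin{pmatrix}4-i&2\\2i&i\end{pmatrix}$. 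With this inserted, your argument is complete and is the same as the paper's.
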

\begin{proof}
In view of \eqref{equ:ckr-1}, since $\{g_s\}_{s\in \R}$ commutes with $\cR$, it suffices to check \eqref{equ:cloges} for $\Lambda=h_0\cO_K^2$ and for $\Lambda=h_1\cO_K^2$.
Let $\tau=\log(2+\sqrt{3})$. When $\Lambda=h_0\cO_K^2$, note that we have the following matrix identity.
    \begin{align*}
\begin{pmatrix} 2+\sqrt{3} & \\
& 2-\sqrt{3}\end{pmatrix} h_0=h_0\begin{pmatrix}
4-i & 2\\
2i & i\end{pmatrix}.
\end{align*}
Since $\begin{pmatrix}
4-i & 2\\
2i & i\end{pmatrix}\in \G_K$, this implies  that $g_{\tau}h_0\cO_K^2=h_0\cO_K^2$. When $\Lambda=h_1\cO_K^2$, using the relations $h_1=w_0h_0w_0^{-1}$, $w_0^{-1}g_sw_0=g_{-s}$ and $w_0\cO_K^2=\cO_K^2$, we get
\begin{align}\label{equ:h1qperiod}
    g_{-\tau}h_1\cO_K^2=g_{-\tau}w_0h_0w_0^{-1}\cO_K^2=w_0g_{\tau}h_0\cO_K^2=w_0h_0\cO_K^2=h_1\cO_K^2.  
\end{align}
Multiplying above both sides by $g_{\tau}$ we get $g_{\tau} h_1\cO_K^2=h_1\cO_K^2$ as desired. 
\end{proof}

When $K=\Q(\sqrt{-3})$, {we have $\cO_K=\Z[j]$ and} we will take 
\begin{align}\label{equ:fdeis}
\cF:=\left\{z\in \C: |z|< \inf_{v\in \Z[j]\smallsetminus\{0\}}|z-v|\right\}
\end{align}
to be the fundamental domain (up to a null set) for $\C/\Z[j]$. {Note that $\cF$ is a fundamental domain since it is a \textit{Dirichlet region} for $\Z[j]$ viewed as a discrete subgroup of $\C$.} More explicitly, $\cF$ is the regular hexagon centered at $0$ with vertices $\pm \frac{\sqrt{3}}{3}i$, $\pm \frac12\pm \frac{\sqrt{3}}{6}i$. We mention that it satisfies the following simple but crucial property that 
\begin{align}\label{equ:keyprop}
|z|< \frac{\sqrt{3}}{3}\quad  \text{for all $z\in \cF$}.
\end{align}

\medskip

\subsection{General bounds on $r_K$}
For a general imaginary quadratic field $K$, we have the following lower and upper bounds on $r_K$. 
\begin{Prop}\label{prop:bdonrk}
Let $d\neq 1,3$ be a positive square-free integer and let $K=\Q(\sqrt{-d})$ be the corresponding imaginary quadratic field. Let $D_K$ be the discriminant of $K$.  Let $b_d:=\frac12+\frac{\sqrt{d}}{2}i$ if $d\equiv 1, 2\Mod{4}$ and $b_d:=\frac12+\frac{d-1}{4\sqrt{d}}i$ if $d\equiv 3\Mod{4}$. Let 
$$
\kappa_d:=\inf\{|x|: x\in \cO_K\smallsetminus\{\pm 1, 0\}\}. 
$$ 
 We then have 
 \begin{align}\label{equ:uppbdrk}
\max\left\{\sqrt{\tfrac{|D_K|}{4+2|D_K|^{1/2}}}, \sqrt{\kappa_d \min\{|b_d|,1\}}\right\}\leq r_K\leq \sqrt{\frac{2}{\pi}}|D_K|^{1/4}.
\end{align}

\end{Prop}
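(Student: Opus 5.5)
Recall that $r_K=\sup_{\Lambda\in X}\sys(\Lambda)$. The upper bound will come from Minkowski's convex body theorem applied to the sup-norm ball. The two lower bounds will come from exhibiting explicit lattices in $X$ whose systole we can bound from below.

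\emph{Upper bound.} View $\Lambda\in X$ as a lattice in $\R^4$ of covolume $|D_K|/4$, by \eqref{equ:covol}. The ball $\B_{\C^2}(r)$ is the product of two discs of radius $r$, so its volume is $\pi^2r^4$. If $\pi^2 r^4>2^4\cdot |D_K|/4=4|D_K|$, Minkowski's theorem gives a nonzero lattice vector in $\B_{\C^2}(r)$. Hence $\sys(\Lambda)\le r$ for every $r>(4|D_K|/\pi^2)^{1/4}=\sqrt{2/\pi}\,|D_K|^{1/4}$, and taking the supremum over $\Lambda$ gives the upper bound in \eqref{equ:uppbdrk}.

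\emph{Second lower bound.} I would test the lattice $\Lambda=g\cO_K^2$ with
\[
g=\begin{pmatrix} a & ab\\ 0 & a^{-1}\end{pmatrix},\qquad b=b_d,\quad a=\sqrt{\kappa_d}\in\R_{>0}.
\]
For $(p,q)\in\cO_K^2\smallsetminus\{\bm 0\}$ the image is $(a(p+bq),\,a^{-1}q)$. There are three cases.
\begin{itemize}
\item If $q=0$, then $|ap|\ge a$.
\item If $|q|\ge\kappa_d$, i.e.\ $q\ne 0,\pm1$, then the second coordinate has modulus at least $\kappa_d/a=\sqrt{\kappa_d}$.
\item If $q=\pm1$, the norm is at least $a|p\pm b|\ge a\cdot\mathrm{dist}(b,\cO_K)$.
\end{itemize}
Here $b_d$ is chosen as a point far from $\cO_K$: it is the center of the rectangle spanned by $0,1,\sqrt{-d},1+\sqrt{-d}$, or a circumcenter-type point of the triangle in the $d\equiv 3$ case. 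The key step is to verify that $\mathrm{dist}(b_d,\cO_K)=|b_d|$ (the nearest lattice points are $0$ and $1$), so this quantity is at least $\min\{|b_d|,1\}$. Since $\kappa_d\ge1$, the three cases together give $\sys(\Lambda)\ge\sqrt{\kappa_d}\min\{|b_d|,1\}^{?}$. The exponent bookkeeping needs care, and I would adjust $a$ to balance the cases. Choosing $a=\sqrt{\kappa_d/ \min\{|b_d|,1\}}$-type scalings, or the natural choice that makes the first coordinate bound $a\min\{|b_d|,1\}$ equal to the second bound $\kappa_d/a$, gives $a^2=\kappa_d/\min\{|b_d|,1\}$. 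The common value is then $\sqrt{\kappa_d\min\{|b_d|,1\}}$, matching the claim. The case $q=0$ gives $a\ge$ this value as well.

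\emph{First lower bound.} This is the main obstacle. I would use a different construction, most plausibly a transversal/averaging argument. Consider the family $\Lambda_z=\mathrm{diag}(a,a^{-1})u(z)\cO_K^2$ with $z$ ranging over a fundamental domain of $\C/\cO_K$. Count the $q$ with $|q|<ar$, and for each such $q$ the $z$ for which $|qz+p|<r/a$ for some $p$. The measure of this bad set of $z$ is at most roughly $\sum_{0<|q|<ar}\pi r^2a^{-2}$, times a normalization by $|D_K|^{1/2}/2$ and a lattice-point count in the disc of radius $ar$. Using a crude lattice point bound for $\cO_K$ in a disc (area term plus a perimeter term $\ll |D_K|^{1/2}\cdot ar$), and showing the total measure is less than $\mathrm{covol}(\cO_K)=|D_K|^{1/2}/2$, produces some $z$ with $\sys\ge r$. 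Optimizing yields $r^2=|D_K|/(4+2|D_K|^{1/2})$. I expect getting exactly the constants $4+2|D_K|^{1/2}$ from the lattice-point count to be the delicate part. If the averaging does not reproduce them, I would instead choose $a$ so that the only small $q$ are units, and bound the distance from a well-chosen $z$ to $\cO_K$ via the covering radius of $\cO_K$.
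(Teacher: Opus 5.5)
Your upper bound and second lower bound are correct and match the paper's proof. Your final scaling $a^2=\kappa_d/\min\{|b_d|,1\}$ is the paper's $a_d$, and the paper likewise only asserts $\mathrm{dist}(b_d,\mathcal{O}_K)=|b_d|$.

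\textbf{The gap: the first lower bound.} You give no argument for $r_K^2\ge |D_K|/(4+2|D_K|^{1/2})$, and neither route you sketch can reach it. Your union bound over $z\in\mathbb{C}/\mathcal{O}_K$ loses a constant factor. Once $|D_K|$ is large, at the target $r^2\approx\frac12|D_K|^{1/2}$ the summed measure of the bad sets exceeds $1$ for every choice of $a$. With the area count for the $q$'s it is about $\frac12\cdot\frac{\pi(ar)^2}{\mathrm{covol}(\mathcal{O}_K)}\cdot\frac{\pi(r/a)^2}{\mathrm{covol}(\mathcal{O}_K)}=\frac{2\pi^2r^4}{|D_K|}\approx\frac{\pi^2}{2}$. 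So averaging only gives $r_K\gg|D_K|^{1/4}$ with a worse constant, and your claim that optimizing yields exactly $|D_K|/(4+2|D_K|^{1/2})$ is unfounded. Your fallback is bounded. Since $2\in\mathcal{O}_K$ is a non-unit with $|2|=2$, forcing every $q$ with $0<|q|<ar$ to be a unit requires $ar\le 2$, while $q=0$ requires $a\ge r$. Hence $r\le\sqrt2$. This is just your second construction again and can never grow like $|D_K|^{1/4}$.

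\textbf{The missing idea.} Work in the regime where the small $q$ are rational integers rather than units. Put $N:=\frac12|D_K|^{1/2}$. Every nonreal element of $\mathcal{O}_K$ has imaginary part of modulus at least $N$, so every $x\in\mathcal{O}_K$ with $|x|<N$ lies in $\mathbb{Z}$. The paper takes $\lambda=N/(1+N)$, $a=\sqrt{1+N}$ and the purely imaginary shear $z=i\lambda$. That is, it uses the lattice $g\mathcal{O}_K^2$ with $g=\left(\begin{smallmatrix} a & a\lambda i\\ 0 & a^{-1}\end{smallmatrix}\right)$, so that $(x,y)\mapsto(a(x+i\lambda y),a^{-1}y)$.
\begin{itemize}
\item If $|y|\ge N$, the second coordinate is at least $N/\sqrt{1+N}$.
\item If $|y|<N$, then $y\in\mathbb{Z}$, and there are two subcases:
\begin{itemize}
\item for $|x|\ge N$ one gets $|x+i\lambda y|>N(1-\lambda)=\lambda$;
\item for $|x|<N$ both $x,y$ are integers, so $|x+i\lambda y|=\sqrt{x^2+\lambda^2y^2}\ge\lambda$.
\end{itemize}
\end{itemize}
Hence $\sys(g\mathcal{O}_K^2)\ge a\lambda=N/\sqrt{1+N}$, and $N^2/(1+N)=|D_K|/(4+2|D_K|^{1/2})$.

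This $z=i\lambda$ is a very special point: $\mathrm{dist}(qz,\mathcal{O}_K)\ge\lambda$ for all $q\in\mathbb{Z}$ with $0<|q|<N$. A typical $z$ does not have this property, which is why a bound obtained by averaging over $z$ cannot reach this constant.
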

\begin{remark}
Note that by our assumption that $d\neq 1,3$, we have $\cO_K^{\times}=\{\pm 1\}$. Thus $\kappa_d$ is the smallest absolute value of nonzero non-unit element in $\cO_K$. More precisely, we have $\kappa_{2}=\kappa_{7}=\sqrt{2}$ with the minimal value obtained by $\sqrt{2}i$ and $\frac{1+\sqrt{7}i}{2}$ respectively, $\kappa_{11}=\sqrt{3}$ with the minimal value obtained by $\frac{1+\sqrt{11}i}{2}$ and for all other cases $\kappa_d=2$ with the minimal value obtained by $2$. 
\end{remark}
\begin{remark}
Let $f_d:=\max\{\sqrt{\tfrac{|D_K|}{4+2|D_K|^{1/2}}}, \sqrt{\kappa_d \min\{|b_d|,1\}}\}$ be the above lower bound. {(See Table \ref{tab:hd_values} for the first $10$ values of $f_d$.)} Note that $|b_d|<1$ only when $d=2, 7, 11$, with $b_{2}=\frac{\sqrt{3}}{2}\approx 0.866$, $b_{7}=\frac{2}{\sqrt{7}}\approx 0.7559$ and $b_{11}=\frac{3}{\sqrt{11}}\approx 0.9045$. Thus we have 
$$
r_K\geq \sqrt{\kappa_d \min\{|b_d|,1\}}\geq \sqrt{\sqrt{2}|b_{7}|}=\frac{2^{3/4}}{7^{1/4}}= 1.03395\cdots>1.
$$
This, combining with Theorem \ref{thm:crilocus},  implies that except for the Eisenstein integers case, the standard integral lattice $\cO_K^2$ (or any shearing of it by upper or lower triangular unipotent matrices in $G$) never lands in the critical locus. 

On the other hand, recall $D_K=-d$ if $d\equiv 3\Mod{4}$ and $D_K=-4d$ if $d\equiv 1, 2\Mod{4}$. Since $\sqrt{\kappa_d \min\{|b_d|,1\}}$ is bounded, and $\sqrt{{\tfrac{|D_K|}{4+2|D_K|^{1/2}}}}\sim \frac{\sqrt{2}}{2}|D_K|^{1/4}\to\infty$ as $d\to\infty$, we have $f_d=\sqrt{{\tfrac{|D_K|}{4+2|D_K|^{1/2}}}}$ for all $d$ sufficiently large. Indeed, by a direct inspection this identity holds  for all $d\geq 31$. We  then have the following asymptotic estimate on $r_K$:
\begin{align*}
0.7071\cdots=\frac{\sqrt{2}}{2}\leq \liminf_{d\to\infty}\frac{r_K}{|D_K|^{1/4}}\leq \limsup_{d\to\infty}\frac{r_K}{|D_K|^{1/4}}\leq \sqrt{\frac{2}{\pi}}=0.7978\cdots.
\end{align*}

\begin{table}[ht]
\centering
\begin{tabular}{c|c|c|c}
$d$ & $f_d$ (exact) & $f_d$ (approx) & Maximising term \\
\hline
2 & $\sqrt{\frac{\sqrt{6}}{2}}$ & 1.1067 & second \\
5 & $\sqrt{2}$ & 1.4142 & second \\
6 & $\sqrt{2}$ & 1.4142 & second \\
7 & $\frac{2^{3/4}}{7^{1/4}}$ & 1.0339 & second \\
10 & $\frac{\sqrt{10}}{\sqrt{1+\sqrt{10}}}$ & 1.550 & first \\
11 & $\frac{3^{3/4}}{11^{1/4}}$ & 1.2516 & second \\
13 & $\frac{\sqrt{13}}{\sqrt{1+\sqrt{13}}}$ & 1.680 & first \\
14 & $\frac{\sqrt{14}}{\sqrt{1+\sqrt{14}}}$ & 1.718 & first \\
15 & $\sqrt{2}$ & 1.4142 & second \\
17 & $\frac{\sqrt{17}}{\sqrt{1+\sqrt{17}}}$ & 1.822 & first
\end{tabular}
\caption{Values of $f_d$ for selected $d$. ``first'' refers to $\sqrt{{\tfrac{|D_K|}{4+2|D_K|^{1/2}}}}$, ``second'' refers to $\sqrt{\kappa_d\min\{|b_d|,1\}}$.}
\label{tab:hd_values}
\end{table}

\end{remark}

\begin{proof}[Proof of Proposition \ref{prop:bdonrk}]
{Recall from \eqref{equ:covol} that for any $\Lambda=g\cO_K^2\in X$ with $g\in G$ we have $\mathrm{covol}(\Lambda)=|D_K|/4$. } By direct computation we also have $\vol(\B_{\C^2}(r))=\pi^2r^4$. By the Minkowski's convex body theorem, $\Lambda\cap \B_{\C^2}(r)\neq \{\bm{0}\}$ as long as 
$$
\vol(\B_{\C^2}(r))>2^4\mathrm{covol}(\Lambda)=4|D_K|\quad \Leftrightarrow \quad r>\sqrt{\frac{2}{\pi}}|D_K|^{1/4}.
$$
This shows that $r_K\leq \sqrt{\frac{2}{\pi}}|D_K|^{1/4}$ as desired. 

Next, we prove the lower bound $r_K\geq\sqrt{ \frac{|D_K|}{4+2|D_K|^{1/2}}}$. Let $\delta_d=1$ if $d\equiv 1, 2\Mod{4}$ and let $\delta_d=\frac12$ if $d\equiv 3\Mod{4}$. Let $\lambda_d=\frac{\delta_d\sqrt{d}}{1+\delta_d\sqrt{d}}$. Note that $\delta_d\sqrt{d}=\frac{1}{2}|D_K|^{1/2}$, thus it suffices to show $r_K\geq\sqrt{\lambda_d\delta_d}d^{1/4}$. For this, it suffices to find a lattice $\Lambda\in X$  such that $\sys(\Lambda)\geq \sqrt{\lambda_d\delta_d}d^{1/4}$. 
We take $\Lambda_1=\mathrm{g}_1\cO_K^2$ with
$$
\mathrm{g}_1=\left(\begin{smallmatrix}
\sqrt{\delta_d/\lambda_d}d^{1/4} & \sqrt{\delta_d\lambda_d} d^{1/4} i\\ 0 & \sqrt{\lambda_d/\delta_d}d^{-1/4}\end{smallmatrix}\right).
$$ 
We first prove the following claim:  We have 
\begin{align}\label{equ:lobd}
\inf_{(x,y)\in \cO_K^2\smallsetminus\{\bm{0}\}, |y|<\delta_d\sqrt{d}}|x+i\lambda_dy|\geq \lambda_d.
\end{align}
The proof of this claim is based on the following simple observation:
\begin{align*}
\forall\, x\in \cO_K\ :\  |x|<\delta_d\sqrt{d}\quad \Rightarrow\quad x\in \cO_K\cap \R= \Z.
\end{align*}
This observation is true since otherwise $x$ would have a non-trivial imaginary part which has absolute value at least $\delta_d\sqrt{d}$. By this observation, we see that the $y$ in \eqref{equ:lobd} must be an integer. 
We now prove \eqref{equ:lobd}.  If $|x|\geq \delta_d\sqrt{d}$, then we have for any $y\in \cO_K$ with $|y|<\delta_d\sqrt{d}$, $|x+i\lambda_dy|\geq |x|-\lambda_d|y|\geq \delta_d\sqrt{d}(1-\lambda_d)=\lambda_d$ as desired. Next, if $|x|< \delta_d\sqrt{d}$, then again by the observation we also have $x\in \Z$. Thus for any $(x,y)\in \cO_K^2\smallsetminus\{\bm{0}\}$ with $\max\{|x|,|y|\}<\delta_d\sqrt{d}$ we have $(x,y)$ is a nonzero integer vector, and thus
\begin{align*}
|x+i\lambda_dy|=\sqrt{x^2+\lambda_d^2y^2} \geq \lambda_d.
\end{align*}
This proves \eqref{equ:lobd}. Now we show that $\|\mathrm{g}_1\bm{v}\|\geq \delta_dd^{1/4}$ for any nonzero $\bm{v}\in \cO_K^2$. Take any nonzero $\bm{v}=\begin{pmatrix} x \\ y\end{pmatrix}\in \cO_K^2$,
note that 
\begin{align*}
\mathrm{g}_1\begin{pmatrix} x \\ y\end{pmatrix}=\begin{pmatrix}
\sqrt{\delta_d/\lambda_d}d^{1/4}(x+i\lambda_d y) \\ \sqrt{\lambda_d/\delta_d}d^{-1/4}y\end{pmatrix}. 
\end{align*}
If $|y|\geq \delta_d\sqrt{d}$, then we have $\|\mathrm{g}_1\begin{pmatrix} x \\ y\end{pmatrix}\|\geq \sqrt{\lambda_d/\delta_d}d^{-1/4}|y|\geq \sqrt{\lambda_d\delta_d} d^{1/4}$. If $|y|<\delta_d\sqrt{d}$, then by \eqref{equ:lobd} we have $\|\mathrm{g}_1\begin{pmatrix} x \\ y\end{pmatrix}\|\geq \sqrt{\delta_d/\lambda_d}d^{1/4}|x+i\lambda_dy|\geq \sqrt{\lambda_d\delta_d}d^{1/4}$. This finishes the proof of the lower bound $r_K\geq \sqrt{\lambda_d\delta_d}d^{1/4}$. 

Next, we prove the second lower bound $r_K\geq \sqrt{\kappa_d \min\{|b_d|,1\}}$, where $b_d$ and  $\kappa_d$  are as in this proposition. Let $a_d=\sqrt{\frac{\kappa_d}{\min\{|b_d|,1\}}}$. We take $\Lambda_2:=\mathrm{g}_2\cO_K^2$ with
$$
\mathrm{g}_2=\begin{pmatrix} 
a_d & a_db_d\\
0 & a_d^{-1}\end{pmatrix}.
$$
It suffices to show that $\sys(\Lambda_2)\geq \sqrt{\kappa_d \min\{|b_d|,1\}}$. 
We first claim that 
\begin{align*}
\inf_{x\in \cO_K}|x+b_d|=\inf_{x\in \cO_K}|x-b_d|= |b_d|=\left\{\begin{array}{ll} 
	\frac{\sqrt{d+1}}{2}  & d\equiv 2,3\Mod{4},\\
\frac{d+1}{4\sqrt{d}} &  d\equiv 1\Mod{4}.
\end{array}\right.
\end{align*}
The above first equality is clear since $-\cO_K=\cO_K$. For the second equality, when $d\equiv 1, 2\Mod{4}$ so that $\cO_K=\Z[\sqrt{d}i]$ and $b_d=\frac12+\frac{\sqrt{d}}{2}i$, we have the infimum is attained by taking $x\in \{0,1, \sqrt{d}i, 1+\sqrt{d}i\}$, and for any other choice of $x\in \cO_K$, one verifies that $|x-b_d|>|b_d|$. When $d\equiv 3\Mod{4}$ so that $\cO_K=\Z[\frac{1+\sqrt{d}i}{2}]$ and $b_d=\frac12+\frac{d-1}{4\sqrt{d}}i$, the infimum is attained by taking $x\in \{0, 1, \frac{1+\sqrt{d}i}{2}\}$ and for all other choice of $x\in \cO_K$, we have $|x-b_d|>|b_d|$. 

Now take any nonzero $\bm{v}=\begin{pmatrix} x \\ y\end{pmatrix}\in \cO_K^2$,
note that 
\begin{align*}
\mathrm{g}_2\begin{pmatrix} x \\ y\end{pmatrix}&=\begin{pmatrix}
a_d(x+b_d y) \\ a_d^{-1}y\end{pmatrix}. 
\end{align*}
If $y=0$, we have $\|\mathrm{g}_2\bm{v}\|\geq a_d\geq a_d\min\{|b_d|,1\}=\sqrt{\kappa_d \min\{|b_d|,1\}}$. If $y\in \cO_K^{\times}=\{\pm 1\}$ we have
$\|\mathrm{g}_2\bm{v}\|\geq a_d|x\pm b_d|\geq a_d|b_d|\geq a_d\geq a_d\min\{|b_d|,1\}=\sqrt{\kappa_d \min\{|b_d|,1\}}$. If $y\in \cO_K\smallsetminus\{0,\pm1\}$, then we have $|y|\geq \kappa_d$ and thus $\|\mathrm{g}_2\bm{v}\|\geq a_d^{-1}\kappa_d=\sqrt{\kappa_d \min\{|b_d|,1\}}$. This finishes the proof of the lower bound $\sys(\Lambda_2)\geq \sqrt{\kappa_d \min\{|b_d|,1\}}$, and hence also $r_K\geq \sqrt{\kappa_d \min\{|b_d|,1\}}$.
\end{proof}

\subsection{Proof of Theorem \ref*{thm:dirichlet}}\label{sec:thm11}
In this section we prove Theorem \ref{thm:dirichlet}, the Dirichlet-type theorem in our setting. We first introduce some notation. Let $K$ be an imaginary quadratic field. Let $X=G/\G_K=\SL_2(\C)/\SL_2(\cO_K)$ be the homogeneous space as before. For any $s\in \R$ and $z\in \C$ recall $g_s$ and $u(z)$ are given in \eqref{equ:gsdef} and \eqref{equ:uzpm} respectively. For any $z\in \C$, let 
\begin{align}\label{equ:lambdazlattice}
     \Lambda_z:=u(z)\cO_K^2=\begin{pmatrix} 1 & z\\ 0 & 1\end{pmatrix}\cO_K^2\in X. 
\end{align}
Denote also by $\mu_K$ the probability $G$-invariant measure on $X$. Locally,  $\mu_K$ agrees with a Haar measure of $G$. Moreover, it is well-known that the diagonal flow $\{g_s\}_{s\in \R}$ acts ergodically on the probability space $(X,\mu_K)$.
\begin{proof}[Proof of Theorem \ref{thm:dirichlet}]
Recall that the critical radius is defined in \eqref{def:rklambda}.
We will prove Theorem \ref{thm:dirichlet} by showing that 
\begin{align}\label{equ:costrel}
c_K=r_K^2. 
\end{align}

We first show for any $z\in \C$, the system of inequalities 
\begin{align}\label{equ:dithm}
|qz-p|\leq \frac{r_K^2}{t}, \quad 0<|q|\leq t, 
\end{align}
is solvable in $(p,q)\in \cO_K^2$ for all $t>r^2_K $. Note that in view of the definition of $r_K$, for any $\Lambda\in X$ and for any $r>r_K$,  we have $\Lambda\cap \B_{\C^2}(r)\neq \{\bm{0}\}$. Taking $r\to r_K^+$ and using the discreteness of $\Lambda$ we see that $\Lambda\cap \overline{\B_{\C^2}(r_K)}\neq \{\bm{0}\}$. 
Now take any $z\in \C$, and for any $t> r^2_K$, we let $s=\log(t/r_K)$. Then $g_s\Lambda_z\cap \overline{\B_{\C^2}(r_K)}\neq \{\bm{0}\}$, where $\Lambda_z$ is as in \eqref{equ:lambdazlattice}. This implies that there exists $(p,q)\in \cO_K^2\smallsetminus \{\bm{0}\}$ such that
$$
\max\{e^{s}|qz-p|, e^{-s}|q|\}\leq r_K.
$$
Plugging in $s=\log(t/r_K)$ and using the fact that 
$
\inf\{|p|: p\in \cO_K\smallsetminus\{0\}\}\geq 1>\frac{r_K^2}{t},
$ 
we see that the above system of inequalities is equivalent to \eqref{equ:dithm}. {Finally, we show that for any $z\in \C$, the solvability of \eqref{equ:dithm} in $(p,q)\in \cO_K^2$ for all $t>r_K^2$ implies the  solvability of \eqref{equ:dirichlet} in $(p,q)\in \cO_K^2$ for all $t>r_K^2$. Indeed, take any $z\in \C$ and $t>r_K^2$. Take $t'>t$  so that the set $\{z\in \cO_K: t<|z|<t'\}$ is empty. Note that such $t'$ always exists since the set $\{|z|: z\in \cO_K\}$ is a discrete subset of $[0,\infty)$. Now apply \eqref{equ:dithm} for $t'$ we can find a pair $(p,q)\in \cO_K^2$ satisfying $|qz-p|\leq r_K^2/t'$ and $0<|q|\leq t'$. We then have $|qz-p|\leq r_K^2/t'<r_K^2/t$. Moreover, since the set $\{z\in \cO_K: t<|z|<t'\}$ is empty, for any $q\in \cO_K$,  $0<|q|\leq t'$ is equivalent to $0<|q|\leq t$. We have thus shown that for any $t>r_K^2$, one can find a pair $(p,q)\in \cO_K^2$ satisfying \eqref{equ:dirichlet}.  }

\medskip

Next, fix $0<c<r^2_K$. Let $\psi(t)=\frac{c}{t}$.  We want to show $\DI_K(\psi)$ is of zero Lebesgue measure. By definition \eqref{def:deltaalt} of $\Delta$, we have 
$$
z\in \mathbf{DI}_K(\psi)\quad \Leftrightarrow\quad \Delta(g_s\Lambda_z)>r_0:=\frac12\log\left(\frac{r_K^2}{c}\right),\quad  \text{for all $s\gg 1$ sufficiently large}.
$$
We now prove $\mathbf{DI}_K(\psi)$ is of zero Lebesgue measure.  Suppose not, then $\mathrm{Leb}(\mathbf{DI}_K(\psi))>0$. 
Note that we can find sufficiently small open neighborhoods  $U_0$ of $0$ and $U_1$ of $1$ in $\C$ satisfying the following property.
\begin{align}\label{equ:condiffcom}
\forall\, z_0\in U_0, z_1\in U_1, s>0, \Lambda\in X\,: \, \left|\Delta\left(g_s\left(\begin{smallmatrix} 1 & 0\\ z_0 & 1\end{smallmatrix}\right)\left(\begin{smallmatrix} z_1 & 0\\ 0 & z_1^{-1}\end{smallmatrix}\right)\Lambda\right)-\Delta(g_s\Lambda)\right|< \frac12 r_0. 
\end{align}

Consider the following sets
\begin{align*}
C:=\left\{\begin{pmatrix} 1 & 0\\ z_0 & 1\end{pmatrix}\begin{pmatrix} z_1 & 0\\ 0 & z_1^{-1}\end{pmatrix}u(z): z_0\in U_0, z_1\in U_1, z\in \mathbf{DI}_K(\psi)\right\}\subset G, \quad \text{and}\quad \cC:=\pi(C)\subset X.
\end{align*} 
Here $\pi: G\to X$ is the natural projection map from $G$ to $X$.
Now note that under the local coordinates 
$$
g=\begin{pmatrix}
    1 & 0\\ z_0 & 1
\end{pmatrix} \begin{pmatrix}
    z_1 & 0\\ 0 & z_1^{-1}
\end{pmatrix}\begin{pmatrix}
    1 & z_2\\ 0 & 1
\end{pmatrix}\in G,\qquad  (z_0, z_2\in \C, z_1\in \C \smallsetminus \{0\})
$$
the Haar measure (up to scalars) of $G$ is given by 
$$
\dd m_G(g)=|z_1|^2\dd z_0\dd z_1\dd z_2,
$$
where $\dd z$ is the Lebesgue measure on $\C$. In view of this Haar measure description and Fubini's theorem, since $\mathrm{Leb}(\mathbf{DI}_K(\psi))>0$ and $U_0, U_1$ are open sets in $\C^2$, we see that $m_G(C)>0$. Up to further shrinking $U_0$ and $U_1$, we may assume $\pi|_C$ is injective and thus we also have $\mu_K(\cC)>0$. On the other hand, note that for any $\Lambda\in \cC$, we have $\Delta(g_s\Lambda)>\frac12 r_0$ for all $s\gg 1$ sufficiently large. That is, for any $\Lambda\in \cC$, $g_s\Lambda \notin \Delta^{-1}[0, \frac12 r_0]$ for all  $s\gg 1$ sufficiently large. Since $\cC, \Delta^{-1}[0, \frac12 r_0]\subset X$ are both of positive $\mu_K$-measure, this violates the ergodicity of the flow $\{g_s\}$ on $(X,\mu_K)$.  We have thus got a contradiction, proving that $\mathrm{Leb}(\mathbf{DI}_K(\psi))=0$.

Finally, in view of the relation \eqref{equ:costrel} we immediately get the following bound on $c_K$ using \eqref{equ:uppbdrk}: 
\begin{align}\label{equ:bdckmp}
\max\left\{\tfrac{|D_K|}{4+2|D_K|^{1/2}}, \kappa_d\min\{|b_d|,1\}\right\}\leq c_K\leq \frac{2}{\pi}|D_K|^{1/2}.
\end{align}
The bound \eqref{equ:bdck} is then an immediate consequence of \eqref{equ:bdckmp}. 
\end{proof}

\medskip

\subsection{DI vs BA}
Let $\psi_1(t)=\frac{1}{t}$. Recall 
$$\DI_K=\bigcup_{c\in(0,c_K)} \DI_K(c\psi_1)
$$
is the set of Dirichlet improvable complex numbers with respect to $K$. In view of Theorem \ref{thm:dirichlet}, $\DI_K$ is of zero Lebesgue measure. Moreover, as shown in the above proof, we have the following dynamical interpretation of Dirichlet improvable numbers:
\begin{align}\label{equ:didyint}
    z\in \DI_K\quad \Leftrightarrow\quad \exists\, r>0\, \text{ such that }\, g_s\Lambda_z \in \Delta^{-1}(r, \infty),\quad \forall\, s\gg 1.
\end{align}
In other words, in view of the Mahler's compactness criterion, $z\in \DI_K$ if and only if the positive orbit $\{g_s\Lambda_z: s>0\}$ eventually avoids a compact neighborhood of $\frak{C}_K$.
On the other hand,  the set of badly approximable complex numbers with respect to $K$, denoted by $\mathbf{BA}_K$, is defined to be the set of complex numbers $z\in \C/\cO_K$ for which
\begin{align}\label{equ:badapp}
    \liminf_{\substack{|q| \to \infty\\ q \in \mathcal{O}_K}} |q| \min_{p \in \mathcal{O}_K} |qz - p| > 0.
\end{align}
Using similar analysis as in the  above proof, we see that 
\begin{align}\label{eq: BA chararacterization}
    z\in \mathbf{BA}_K\quad \Leftrightarrow\quad \exists\, R>0\ \text{such that}\ g_s\Lambda_z\in \Delta^{-1}[0, R],\quad \forall\, s>0. 
\end{align}
It then follows from a similar ergodicity argument  that the set $\mathbf{BA}_K$ is of zero Lebesgue measure. 
For the two special cases under our consideration when $K = \mathbb{Q}(\sqrt{-1})$ or $K= \mathbb{Q}(\sqrt{-3})$, this set is known to have full Hausdorff dimension; see \cite[Theorem 1]{EK10}. In fact, the authors there show this set to be winning in the sense of Schmidt. As a byproduct of our analysis, especially the structure theorems provided for critical loci for the Gaussian and Eisenstein integers provided in Theorem \ref{thm:crilocus}, we have the following markedly different relations between the two sets $\DI_K$ and $\mathbf{BA}_K$ in these two cases. The contrasting situations should be viewed as a justification for studying the two notions of bad approximability and Dirichlet improvability independently.
\begin{Prop}
    If $K = \mathbb{Q}(\sqrt{-3})$, then $\mathbf{BA}_K \subset \mathbf{DI}_K$. In particular, the latter set is of full Hausdorff dimension. When $K = \mathbb{Q}(\sqrt{-1})$, we have that $\mathbf{BA}_K\smallsetminus \mathbf{DI}_K$ has full Hausdorff dimension.
\end{Prop}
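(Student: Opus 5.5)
The plan is to work entirely through the dynamical characterizations \eqref{equ:didyint} and \eqref{eq: BA chararacterization}, together with the explicit critical loci of Theorem \ref{thm:crilocus}.

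\textbf{The Eisenstein case.} Let $K=\Q(\sqrt{-3})$ and take $z\in \mathbf{BA}_K$. By \eqref{eq: BA chararacterization} there is $R>0$ with $g_s\Lambda_z\in \Delta^{-1}[0,R]$ for all $s>0$, so the forward orbit stays in a fixed compact set $Q$. We argue by contradiction and suppose $z\notin\DI_K$. By \eqref{equ:didyint}, for every $r>0$ there are arbitrarily large $s$ with $\Delta(g_s\Lambda_z)\le r$. By compactness of $Q$ and properness of $\Delta$ (Lemma \ref{lem:criticallocus}), we get $s_n\to\infty$ with $g_{s_n}\Lambda_z\to\Lambda_\infty\in\fC_K$. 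By \eqref{equ:ckr-3}, $\Lambda_\infty=R_\theta u(w)\cO_K^2$ or $R_\theta u^-(w)\cO_K^2$.

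\textbf{The key step.} In the first case, $g_{-t}$ contracts $u(w)$: $g_{-t}R_\theta u(w)\cO_K^2=R_\theta u(e^{-2t}w)g_{-t}\cO_K^2$. Since $g_{-t}\cO_K^2$ contains $(e^{-t},0)^t$, we get $\Delta(g_{-t}\Lambda_\infty)\ge t+O(1)$. Now fix a large $t$. By continuity, $g_{s_n-t}\Lambda_z$ is close to $g_{-t}\Lambda_\infty$ for $n$ large, hence has $\Delta>R$. Since $s_n-t>0$ for $n$ large, this contradicts $z\in\mathbf{BA}_K$. In the $u^-$ case we use $g_{+t}$ instead: $g_t\Lambda_\infty$ has a short vector $(0,e^{-t})^t$. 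Then $g_{s_n+t}\Lambda_z$ is close to $g_t\Lambda_\infty$, which gives the same contradiction, and this direction is also forward in time. Hence $\mathbf{BA}_K\subset\DI_K$, and full dimension follows from \cite[Theorem 1]{EK10}. In words: every critical lattice is divergent in one time direction, so bounded orbits cannot accumulate on $\fC_K$.

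\textbf{The Gaussian case.} Let $K=\Q(\sqrt{-1})$. By Lemma \ref{rmk:cloges}, every point of $\fC_K$ lies on a compact $\{g_s\}$-orbit. Let $Y:=\{g_s\}_{s\in[0,\tau]}\,\cR h_0\cO_K^2$, which is a compact $\{g_s\}$-invariant set meeting $\fC_K$ (in fact $\fC_K\subset Y\cup w_0Y$). We want $z$ whose orbit $g_s\Lambda_z$ is bounded yet comes arbitrarily close to $h_0\cO_K^2$ infinitely often. We would use the winning or Schmidt-game machinery. The set of $z$ with bounded forward orbit that accumulates on a given compact invariant set of positive codimension should still have full dimension. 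Concretely, I would aim to show that
\[
E_\epsilon:=\{z: \{g_s\Lambda_z\}_{s>0} \text{ bounded and } \liminf_{s\to\infty}\Delta(g_s\Lambda_z)=0\}
\]
has full dimension. One construction is to take $z=z_0+\sum_k w_k$, where $z_0$ has periodic-like approximations shadowing the closed geodesic through $h_0\cO_K^2$. Equivalently, use the unstable leaf $u(\cdot)h_0\cO_K^2$: the points $z\in\C$ with $\Lambda_z\in u(\C)g_{-S}h_0\cO_K^2$-type leaves, namely the $z$ with $u(z)\cO_K^2=u(x)g_{-S}\gamma$-related lattices, are dense. The intended mechanism is then a Cantor-set construction. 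It alternates long stretches of the BA-winning game, which keep the orbit bounded and carry most of the dimension, with short prescribed stretches forcing the orbit to come within $1/k$ of $\Lambda_0=h_0\cO_K^2$. The forcing stretches come from choosing $z$ in a small ball, of size comparable to $e^{-2s_k}$, around a point whose orbit hits the stable leaf of $\Lambda_0$. If these insertions are sparse (the lengths of the free stretches grow super fast), the dimension loss vanishes.

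\textbf{Expected obstacle.} The main difficulty is making this construction rigorous. It needs a quantitative statement: every ball in $\C$ contains a sub-ball of comparable log-scale whose points all pass near $\Lambda_0$ at a controlled time, with bounded excursion meanwhile. I would get this from density of the $u(\C)$-orbit projection and compactness of $Y$, or alternatively cite a result of Kleinbock--Weiss type on full dimension of bounded orbits accumulating on a prescribed compact orbit. The Eisenstein inclusion argument is routine by comparison.
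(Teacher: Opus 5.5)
Your Eisenstein argument is correct and is the paper's argument in slightly different packaging. The paper first notes that any limit point $\Lambda$ of a bounded forward orbit has a bounded two-sided orbit $\{g_s\Lambda\}_{s\in\mathbb{R}}$, while every critical lattice $R_\theta u(w)\mathcal{O}_K^2$ or $R_\theta u^-(w)\mathcal{O}_K^2$ diverges under $g_{-t}$ or $g_{t}$ respectively. You run the same contradiction directly with $g_{s_n\mp t}\Lambda_z$.

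The Gaussian half, however, has a genuine gap. By \eqref{equ:didyint} and \eqref{eq: BA chararacterization}, the set $\{z:\{g_s\Lambda_z\}_{s>0}\ \text{bounded},\ \liminf_{s\to\infty}\Delta(g_s\Lambda_z)=0\}$ is literally $\mathbf{BA}_K\smallsetminus\mathbf{DI}_K$. So reducing to its full dimension only restates the claim. What follows in your proposal is a heuristic, and three things are left unproved:
\begin{itemize}
\item the Cantor construction is never specified;
\item the quantitative lemma you flag as the main obstacle is not established;
\item there is no dimension count showing that sparse forced visits near $h_0\mathcal{O}_K^2$ cost no dimension while excursions stay bounded.
\end{itemize}
You also cannot get the accumulation condition from the winning property of $\mathbf{BA}_K$. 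Schmidt games handle avoidance-type conditions, and accumulation near a prescribed compact orbit is not of that type. Your hybrid ``game plus forcing'' scheme would therefore have to be built and analysed from scratch. That is exactly where the dimension content of the statement lies, and the proposal leaves it open; the appeal to a ``Kleinbock--Weiss type'' result is not a precise reference.

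The paper fills this step with \cite[Theorem 2.5]{Einsiedler-Kleinbock-Rao}. Let $C=\{g_sr: s\in\mathbb{R},\ r\in\mathcal{R}\}$ be the centralizer of the flow. The theorem says: for any $\Lambda\in X$ whose two-sided orbit $\{g_s\Lambda\}_{s\in\mathbb{R}}$ is precompact, there is a full-dimension set of $z\in\mathbf{BA}_K$ for which $g_{s_k}\Lambda_z$ converges to a point of $C\Lambda$ along some $s_k\to\infty$. By Lemma \ref{rmk:cloges}, every $\Lambda\in\mathfrak{C}_K$ satisfies $g_\tau\Lambda=\Lambda$, so its orbit is compact and the theorem applies. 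Since $C$ is commutative, a limit $g_\sigma r\Lambda$ becomes $r\Lambda\in\mathcal{R}\mathfrak{C}_K=\mathfrak{C}_K$ after replacing $s_k$ by $s_k-\sigma$. Hence $\liminf_{s\to\infty}\Delta(g_s\Lambda_z)=0$ for these $z$, i.e.\ they lie in $\mathbf{BA}_K\smallsetminus\mathbf{DI}_K$.

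Your structural input, the periodic orbit through $h_0\mathcal{O}_K^2$, is the right one. To complete the proof you need this theorem, or a fully executed construction proving the same statement, in place of the sketched Cantor argument.
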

\begin{proof}
    We see from \eqref{eq: BA chararacterization} and from the Mahler's compactness criterion, $z$ is badly approximable if and only if $\{g_s \Lambda_z : s>0\}$ is precompact in $X$.
    When $K = \mathbb{Q}(\sqrt{-3})$ every lattice  $\Lambda \in \mathfrak{C}_K$ has the property that the trajectory $\{g_s \Lambda: s\in \R\}$ is unbounded in either the positive or negative direction. It then follows from \cite[Lemma 1.1]{Einsiedler-Kleinbock-Rao} that $\mathbf{BA}_K$ is contained in $\mathbf{DI}_K$. For completeness, we give a direct proof here.
    Let $z \in \mathbf{BA}_K$. Suppose we have an unbounded  sequence of positive times $(t_k)_{k \in \N}$ and $\Lambda \in X$ with
    \begin{equation}\label{eq: limit lattice}
        \lim_{k \to \infty} g_{t_k}\Lambda_z = \Lambda.
    \end{equation}
    The continuity of the action implies that, for any $s \in \mathbb{R}$,
    \begin{equation*}
        \lim_{k \to \infty} g_{t_k +s}\Lambda_z = g_s \Lambda.
    \end{equation*}
    Since the sequence $(t_k+s)_{k \in \N}$ is eventually positive, we see that the orbit $\{g_s\Lambda : s \in \R\}$ is also precompact. In particular $\Lambda$ cannot lie in $\mathfrak{C}_K$. The compactness of $\mathfrak{C}_K$ then implies that there exists some $s_0 > 0$ such that
    \begin{equation*}
        \overline{\{g_s\Lambda_z : s > s_0\}} \cap \mathfrak{C}_K = \emptyset. 
    \end{equation*}
Thus $z \in \mathbf{DI}_K$. 
    
When $K = \mathbb{Q}(\sqrt{-1})$, the situation changes in light of  Lemma \ref{rmk:cloges}. We see that this time $\mathfrak{C}_K$ is two periodic $g_s$-orbits multiplied by the compact diagonal group $\mathcal{R}$. 
    The recent result \cite[Theorem 2.5]{Einsiedler-Kleinbock-Rao} studies precompact orbits and their limit points and is relevant to this setting. For convenience, let 
    $$
    C:=\left\{g_sr: s \in \R, r \in \mathcal{R}\right\} \subset G
    $$ denote the centralizer of the flow $\{g_s\}_{s\in \R}$. The cited theorem states that given any $\Lambda \in X$ with precompact orbit $\{g_s\Lambda \in X: s \in \mathbb{R}\}$ there is a full Hausdorff dimension set of points $z \in \mathbf{BA}_K$ with the property that there exists a sequence of unbounded positive times $(s_k)_{k \in \N}$ with
    \begin{align}\label{equ:consthm}
        \lim_{k \to \infty} g_{s_k} \Lambda_z \in C\Lambda.
    \end{align}
    We now specialize to $\Lambda \in \mathfrak{C}_K$ and observe that $C$ is commutative. Thus there is a full Hausdorff dimension set of $z \in \mathbf{BA}_K$ with the property that there exists an unbounded sequence of positive times $(t_k)_{k \in \N}$ with 
    \begin{equation*}
        \lim_{k \to \infty} g_{t_k}\Lambda_z \in \mathcal{R}\mathfrak{C}_K = \mathfrak{C}_K.
    \end{equation*}
    Thus, each such $z \in \mathbf{BA}_K$ cannot lie in $\mathbf{DI}_K$ and \cite[Theorem 2.5]{Einsiedler-Kleinbock-Rao} constructs a full dimension set of points in $\mathbf{BA}_K \smallsetminus \mathbf{DI}_K$.
\end{proof}

\bigskip

\section{Measure computation for Gaussian and Eisenstein integers}\label{sec: measure estimates}

Let $X=G/\G_K$ with $G=\SL_2(\C)$ and $\G_K=\SL_2(\cO_K)$ as before. Let $\mu_K$ be the $G$-invariant probability measure on $X$. 
Note that under the coordinates
\begin{align*}
g=\begin{pmatrix}
g_{11} & g_{12}\\
g_{21} & g_{22}
\end{pmatrix}\in G,
\end{align*}
the following measure
\begin{align}\label{equ:Haar}
\mathrm{d}m_G(g)=\frac{\mathrm{d}g_{11}\mathrm{d}g_{12}\mathrm{d} g_{21}}{|g_{11}|^2}
\end{align}
gives a Haar measure of $G$, where $\mathrm{d}z$ denotes the usual Lebesgue measure on $\C (\cong \R^2)$. Thus we have
\begin{align}\label{equ:nhamea}
    \mathrm{d}\mu_K(g)=\frac{1}{m_G(\cF_X)}\mathrm{d}m_G(g),
\end{align}
where $\cF_X\subset G$ is a fundamental domain for $X=G/\G_K$.

In this section, we bound  $\mu_K(\Delta^{-1}[0,r])$ for all  sufficiently small $r>0$ in both the Gaussian and Eisenstein cases, hence finish the proof of Theorem \ref{thm: measure estimates}. This estimate will be based on the explicit description of the critical locus for these two cases as given in Theorem \ref{thm:crilocus}.

We first collect some general results which will be useful for our analysis. 
Throughout for any $z_1,z_2\in \C\smallsetminus \{0\}$, we denote by $\ang(z_1,z_2)\in [0,\pi] $ the angle between them. If one of $z_1$ or $z_2$ is zero, then we regard $\ang(z_1,z_2)$ to be any value in $[0,\pi]$, and any constraints on $\ang(z_1,z_2)$ is treated to be null. First we prove an elementary inequality which will be a key ingredient for our analysis.

\begin{Lem}\label{lemma: basic inequality}
For any $a>b\geq 0$ and $\theta\in [0,2\pi]$, we have 
\begin{align}\label{equ:basineq}
 |a+be^{i\theta}| \leq a+\frac{2ab\cos\theta+b^2}{2a}.
 \end{align}
In particular, for any $\theta_0\in ( \frac{\pi}{2}, \pi)$ and $z_1\in \C\smallsetminus\{0\},z_2\in \C$ satisfying
\[\ang(z_1,z_2)\in [\theta_0, \pi] \, \text{ and } \, \frac{|z_2|}{|z_1|}<\frac{|\cos(\theta_0)|}{2},\]
we have
\begin{align}\label{equ:normineq}
|z_1+z_2|\leq |z_1|+\frac{\cos(\theta_0)}{2}|z_2|.
\end{align}

\end{Lem}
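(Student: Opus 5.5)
The plan is to expand the modulus and apply the elementary bound $\sqrt{1+x}\le 1+\tfrac{x}{2}$, valid for all $x\ge -1$ (it is concavity of the square root, or simply $(1+x/2)^2=1+x+x^2/4\ge 1+x$). First I write
\begin{align*}
|a+be^{i\theta}|^2=a^2+2ab\cos\theta+b^2=a^2\Bigl(1+\tfrac{2ab\cos\theta+b^2}{a^2}\Bigr).
\end{align*}
Set $x:=\frac{2ab\cos\theta+b^2}{a^2}$. Because the left-hand side is a square, $1+x\ge 0$, so $x\ge -1$. Taking square roots and applying the bound gives $|a+be^{i\theta}|\le a\bigl(1+\tfrac{x}{2}\bigr)=a+\frac{2ab\cos\theta+b^2}{2a}$, which is \eqref{equ:basineq}. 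There is no sign issue, since the right-hand side $a(1+x/2)$ dominates $a\sqrt{1+x}\ge 0$ automatically. The hypothesis $a>b$ is not even needed here beyond $a>0$.

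For \eqref{equ:normineq}, I rotate so that $z_1=a>0$ is real and write $z_2=be^{i\theta}$ with $b=|z_2|$. Then $\ang(z_1,z_2)\in[\theta_0,\pi]$ means $\cos\theta\le\cos\theta_0<0$. The case $z_2=0$ is trivial, so I assume $b>0$. The ratio hypothesis gives $b<a|\cos\theta_0|/2<a$, so \eqref{equ:basineq} applies and yields
\begin{align*}
|z_1+z_2|\le a+b\cos\theta+\frac{b^2}{2a}\le a+b\cos\theta_0+\frac{b}{2}\cdot\frac{b}{a}.
\end{align*}
Using $\frac{b}{a}<\frac{|\cos\theta_0|}{2}=-\frac{\cos\theta_0}{2}$, the last term is at most $-\frac{\cos\theta_0}{4}b$. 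The total is therefore at most $a+\tfrac34\cos(\theta_0)b\le a+\tfrac12\cos(\theta_0)b$, where the final step uses $\cos\theta_0<0$. This is \eqref{equ:normineq}.

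The only mildly delicate points are checking $x\ge-1$ so that the square-root inequality applies, and keeping track of the negative sign of $\cos\theta_0$ in the last comparison. I don't expect a genuine obstacle.
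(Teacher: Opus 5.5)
Your proof is correct and follows the paper's argument. Your bound $\sqrt{1+x}\le 1+\tfrac{x}{2}$ is the paper's inequality $\sqrt{A+B}\le\sqrt{A}+\tfrac{B}{2\sqrt{A}}$ with $A=a^2$, and your second part uses the same substitution $a=|z_1|$, $b=|z_2|$, $\cos\theta\le\cos\theta_0$; the paper's version of that step only needs $b/a<|\cos\theta_0|$ to reach the constant $\tfrac12$, whereas your stronger hypothesis gives $\tfrac34$, which you then correctly weaken.
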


\begin{proof}
Note that
\[ |a+be^{i\theta}|=\sqrt{ a^2+2ab\cos\theta+b^2 }\leq a+\frac{2ab\cos\theta+b^2}{2a}, \]
where the last inequality follows from the inequality 
\[  \sqrt{A+B}\leq \sqrt{A}+\frac{B}{2\sqrt{A}}, \]
for any $A,B\in \R$ satisfying $A>\max\{0, -B\}$. 
For the second assertion in the Lemma, fix $\theta_0\in (\frac{\pi}{2}, \pi)$.  For any nonzero $z_1,z_2\in \C$, set $\theta:=\ang(z_1,z_2)$, $a:=|z_1|$ and $b:=|z_2|$, then we have 
\begin{align*}
|z_1+z_2|=|a+be^{i\theta}|.
\end{align*}
Thus if $\theta\in [\theta_0,\pi]$ (so that $\cos\theta\leq\cos\theta_0<0$) and $\frac{b}{a}<|\cos \theta_0|$, then applying \eqref{equ:basineq} we get
\begin{align*}
|z_1+z_2|\leq a+b\cos\theta+\frac{b^2}{2a}<a+\frac{\cos\theta_0}{2}b.
\end{align*}
This finishes the proof of \eqref{equ:normineq}. 
\end{proof}

Next, we show that the compact sets $\Delta^{-1}[0,r]$ are all contained in some $\e$-neighborhood of the critical locus, cf. \cite[Lemma 4.5]{KleinbockStrombergssonYu2022}.
\begin{Lem}\label{lem:comarg}
For any $\e>0$, there exists some $r_{\e}>0$ such that 
\begin{align}\label{equ:eapprox}
\Delta^{-1}[0,r]\subset \B_{G}(\e)\Delta^{-1}\{0\},\quad \forall\, 0<r<r_{\e},
\end{align}
where 
$$
\B_{G}(\e):=\{g\in G: \|g-\Id\|<\e\},
$$
with $\|\cdot\|$ the supremum norm on $G$ (viewed as a subset of $\mathrm{M}_{2}(\C)\cong \C^4$). 
\end{Lem}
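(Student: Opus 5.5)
This is a compactness argument by contradiction, in the spirit of \cite[Lemma 4.5]{KleinbockStrombergssonYu2022}. Suppose the statement fails for some $\e>0$. Then there is a sequence $r_n\to 0^+$ and lattices $\Lambda_n\in\Delta^{-1}[0,r_n]$ with $\Lambda_n\notin \B_G(\e)\Delta^{-1}\{0\}$ for every $n$. We may assume $r_n\le r_1$ for all $n$, so every $\Lambda_n$ lies in $\Delta^{-1}[0,r_1]$. By Lemma \ref{lem:criticallocus} this set is compact, so after passing to a subsequence $\Lambda_n\to\Lambda$ for some $\Lambda\in X$. Continuity of $\Delta$ then gives $\Delta(\Lambda)=\lim\Delta(\Lambda_n)\le\lim r_n=0$, so $\Lambda\in\Delta^{-1}\{0\}=\fC_K$.

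It remains to contradict the assumption on $\Lambda_n$. The key point is that the set $\B_G(\e)\Lambda$ is an open neighborhood of $\Lambda$ in $X$. Indeed, $\B_G(\e)$ is an open neighborhood of $\Id$ in $G$, and if $\Lambda=g\G_K$ then $\B_G(\e)\Lambda=\pi(\B_G(\e)g)$. The projection $\pi:G\to X$ is an open map, being the quotient map by a group action.

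Since $\Lambda_n\to\Lambda$, we have $\Lambda_n\in\B_G(\e)\Lambda\subset \B_G(\e)\Delta^{-1}\{0\}$ for all large $n$, which is the desired contradiction. Concretely, convergence in $X$ means we can write $\Lambda_n=h_n\Lambda$ with $h_n\to\Id$ in $G$, so $\|h_n-\Id\|<\e$ eventually.

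There is no real obstacle here. The only point needing care is that convergence in the quotient topology of $X=G/\G_K$ can be realized by left translates $h_n\Lambda$ with $h_n\to \Id$, and this follows directly from the openness of $\pi$.
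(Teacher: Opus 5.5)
Your proof is correct and is essentially the paper's compactness-by-contradiction argument. The only cosmetic difference is that the paper checks $\Lambda\in\Delta^{-1}\{0\}$ by hand, lifting a putative short vector $g_0\bm{m}$ to $g_n\bm{m}$, and it chooses lifts $g_n\to g_0$ explicitly; you instead invoke continuity of $\Delta$ and openness of $\pi$, and both routes are valid.
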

\begin{proof}
We prove the lemma by contradiction. Suppose not, then there exist some $\e>0$ and a decreasing sequence $\{\eta_n\}_{n\in \N}$ with $\eta_n\to 0$ such that, for each $n\in \N$, there exists some  $\Lambda_n\in \Delta^{-1}[0, \eta_n]$ satisfying $\Lambda_n\not\in \B_{G}(\e)\Delta^{-1}\{0\}$. Since $\Lambda_n\in \Delta^{-1}[0, \eta_n]\subset \Delta^{-1}[0, \eta_1]$ for all $n\in \N$ and since $\Delta^{-1}[0, \eta_1]$ is compact by Lemma \ref{lem:criticallocus}, after passing to a subsequence, we may assume $\Lambda_n\to \Lambda_0$ for some $\Lambda_0=g_0\cO_K^2\in X$. We may then choose $\{g_n\}\subset G$ such that $\Lambda_n=g_n\cO_K^2$ and $g_n\to g_0$ as $n\to\infty$. We now claim that $\Lambda_0\in \Delta^{-1}\{0\}$. Note that we can then get a contradiction since otherwise we would have $\Lambda_n=g_n\cO^2_K\in \B_{G}(\e)\Lambda_0\subset \B_{G}(\e)\Delta^{-1}\{0\}$ for all sufficiently large $n$, contradicting the assumption that $\Lambda_n\not\in \B_{G}(\e)\Delta^{-1}\{0\}$ for all $n\in \N$. 

We now prove the claim. Suppose not, that is, $\Lambda_0\not\in \Delta^{-1}\{0\}$, this means that there exists some nonzero $\bm{v}=g_0\bm{m}$ with $\bm{m}\in \cO_K^2$ such that $\|\bm{v}\|<r_K$. Set $\delta:=\frac12(r_K-\|\bm{v}\|)>0$. Since $g_n\to g_0$, setting $\bm{v}_n:=g_n\bm{m}\in \Lambda_n\smallsetminus\{\bm{0}\}$, we have $\bm{v}_n\to \bm{v}$. In particular, $\|\bm{v}_n\|\to \|\bm{v}\|$. Thus we have $\|\bm{v}_n\|<r_K-\delta$ for all $n$ sufficiently large. This implies that $\Delta(\Lambda_n)\geq \log(\frac{r_K}{r_K-\delta})$ for all $n$ sufficiently large, contradicting the fact that $\Delta(\Lambda_n)\leq \eta_n\to 0$ as $n\to\infty$.
\end{proof}

\medskip

\subsection{Measure estimates of $\mu_K(\Delta^{-1}[0,r])$ for $K=\Q(\sqrt{-1})$. }

Recall that the critical locus of this case is given in \eqref{equ:-1} with $h_0\in G$ as in \eqref{eq:h_0}. {Note also that in this case $\cO_K=\Z[i]$ and $\Z[i]^{\times}=\{\pm 1, \pm i\}$.} The main goal of this section is to prove the following upper and lower bound on $\Delta^{-1}[0,r]$ for all sufficiently small $r>0$. 

\begin{Prop}\label{prop:ulbd-1}
Assume $K=\Q(\sqrt{-1})$. There exists $\e_0>0$ such that 
\begin{align}\label{equ:incluupplow}
\underline{K}_r\subset \Delta^{-1}[0,r]\subset \overline{K}^+_r\cup \overline{K}^-_r,\qquad \forall\, 0<r<r_{\e_0}.
\end{align}
Here $r_{\e_0}$ is as in Lemma \ref{lem:comarg} and 
\begin{align}\label{equ:lowbdkr}
\underline{K}_r:=\left\{ gh_0\mathcal{O}_K^2: g\in \B_{G}(\e_0),\, |g_{11}| \in (1-r/2, 1+r/2), \, |g_{12}|, |g_{21}|\in (\tfrac{r}{2(2+\sqrt{3})},\tfrac{r}{2})\right\},
\end{align}
and
\begin{align}\label{equ:uppbdkr}
\overline{K}^{+}_r:=\overline{\cK}_rh_0\mathcal{O}_K^2\quad \text{and}\quad \overline{K}^{-}_r:=w_0\overline{\cK}_rw_0^{-1}h_1\mathcal{O}_K^2,
\end{align}
with
\begin{align}\label{eq: bar cKr}
\overline{\cK}_r:=\left\{g\in \B_{G}(\e_0)\cR: |g_{11}| \in (1-2r, 1+2r),\, \max\{|g_{12}|, |g_{21}|\}<24r\right\}.
\end{align}
\end{Prop}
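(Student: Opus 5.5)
The plan is to reduce both inclusions to explicit inequalities on the entries of a perturbation $g\in\B_G(\e_0)$, by tracking the finitely many \emph{minimal vectors} of the critical lattice $h_0\cO_K^2$.

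\textbf{Step 1: minimal vectors.} First list, up to multiplication by $\Z[i]^\times=\{\pm1,\pm i\}$, all $\bm m\in\Z[i]^2$ with $\|h_0\bm m\|=r_K$. The columns are $h_0\bm e_1=r_K(1,-i\omega e^{-\pi i/12})^t$ and $h_0\bm e_2=r_K(\omega,e^{-\pi i/12})^t$. Since $|\omega|^2=2-\sqrt3<1$, these are minimal vectors with a single dominant coordinate.

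From Minkowski's description (Satz LXII) one expects a few more minimal vectors $h_0\bm m_j$, for instance $\bm m=(1,\pm i)$ and $(1,\pm1)$. These will be found by a finite search, since $\|h_0\bm m\|\le r_K$ forces $\bm m$ to be bounded. For each minimal vector I record which coordinate, or both, has modulus $r_K$, together with the phases of the ratios between its two coordinates. All other nonzero vectors satisfy $\|h_0\bm m\|\ge r_K(1+\eta)$ for some fixed $\eta>0$. Choosing $\e_0$ small then guarantees that for $g\in\B_G(\e_0)$ only the minimal vectors can have $\|gh_0\bm m\|<r_K e^{-r}$, because $r<r_{\e_0}$ is small.

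\textbf{Step 2: the lower inclusion $\underline K_r\subset\Delta^{-1}[0,r]$.} By \eqref{equ:svec} I must show $\|gh_0\bm m_j\|\ge r_Ke^{-r}$ for each minimal $\bm m_j$. The vector $gh_0\bm m_j$ has coordinates $g_{11}x_j+g_{12}y_j$ and $g_{21}x_j+g_{22}y_j$, where $(x_j,y_j)=r_K^{-1}h_0\bm m_j$.
\begin{itemize}
\item For $h_0\bm e_1$ the triangle inequality gives $|g_{11}+g_{12}y_1|\ge 1-\tfrac r2-\tfrac r2|\omega|$. This is $\ge e^{-r}$ for small $r$, because $\tfrac12(1+|\omega|)<1$.
\item For $h_0\bm e_2$ the same bound holds, using $g_{22}=(1+g_{12}g_{21})/g_{11}$, so $|g_{22}|\ge (1-r^2/4)/(1+r/2)$.
\item For minimal vectors with both coordinates of modulus $r_K$, a perturbation could decrease both coordinates, but not simultaneously, because $\det g=1$.
\end{itemize}
Here I expect the lower bounds $|g_{12}|,|g_{21}|>r/(2(2+\sqrt3))$ to be used only later for the measure computation, or via the $g_\tau$-periodicity of Lemma~\ref{rmk:cloges} to normalize. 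If they are genuinely needed here, I will get them from the phase analysis of Step 3.

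\textbf{Step 3: the upper inclusion.} By Lemma~\ref{lem:comarg}, any $\Lambda\in\Delta^{-1}[0,r]$ can be written as $gch_0\cO_K^2$ with $g\in\B_G(\e_0)$ and $c\in\tilde\cR$. If $c\in\cR$, then $gc\in\B_G(\e_0)\cR$. If $c\in w_0\cR$, then conjugating by $w_0$ and using \eqref{equ:h1} puts $\Lambda$ in the form $w_0(\cdot)w_0^{-1}h_1\cO_K^2$. So it suffices to show that $g\in\B_G(\e_0)\cR$ with $\sys(gh_0\cO_K^2)\ge r_Ke^{-r}$ forces
\[
|g_{11}|\in(1-2r,1+2r)\quad\text{and}\quad |g_{12}|,|g_{21}|<24r.
\]

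From $h_0\bm e_1$ we get $|g_{11}+g_{12}y_1|\ge e^{-r}$. The key tool is Lemma~\ref{lemma: basic inequality}: fix $\theta_0$, say $2\pi/3$. If the angle between $g_{11}$ and $g_{12}y_j$ is at least $\theta_0$ for some minimal vector $j$ whose first coordinate is dominant, then $e^{-r}\le|g_{11}|-\tfrac14|g_{12}||y_j|$. Hence $|g_{11}|\ge 1-r+c|g_{12}|$ for some $c>0$. The analogous argument on the second coordinate gives $|g_{22}|\ge 1-r+c|g_{21}|$. Combined with $|g_{11}g_{22}|\le 1+|g_{12}g_{21}|$, this yields $|g_{12}|,|g_{21}|\ll r$ and $|g_{11}|=1+O(r)$. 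The explicit constants $2$ and $24$ then come from tracking $c$.

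\textbf{Main obstacle.} The hard step is ensuring that for \emph{every} phase of $g_{12}/g_{11}$ some minimal vector produces the obtuse-angle configuration. Multiplying by units rotates both coordinates together, so it does not change the relative phase. I therefore need genuinely distinct minimal vectors whose coordinate ratios $y_j$ have phases spreading around the circle, with gaps less than $2(\pi-\theta_0)$.

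If the first-coordinate-dominant minimal vectors do not suffice, I would next use the minimal vectors with both coordinates of modulus $r_K$. I would apply Lemma~\ref{lemma: basic inequality} to both coordinates at once and close the argument with $\det g=1$. Verifying this angular covering from the explicit entries of $h_0$ ($\omega$, $e^{-\pi i/12}$) is the computation I expect to be most delicate.
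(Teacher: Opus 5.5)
Your proposal is correct and is essentially the paper's argument. The finite search yields exactly the six classes of Lemma~\ref{lem:shvec-1}: three with dominant first coordinate and three with dominant second coordinate. Their relative phases $-\pi/2,\pi/6,5\pi/6$ (Remark~\ref{rmk:shvecdes}) are spaced exactly $2\pi/3$ apart, so the covering you flag as the main obstacle holds with $\theta_0=2\pi/3$ and closed arcs. No minimal vector has both coordinates of modulus $r_K$, so your fallback is never needed.

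The only deviation is the closing step. The paper substitutes $g_{11}=(1+g_{12}g_{21})/g_{22}$ into the first-coordinate inequalities to get $e^{-r}<|g_{22}|<e^{r}$ directly, at the cost of the extra angle estimate \eqref{equ:inequinte1}. Your product bound $(e^{-r}+\tfrac{|\omega|}{4}|g_{12}|)(e^{-r}+\tfrac{|\omega|}{4}|g_{21}|)\le 1+|g_{12}g_{21}|$ works equally well and gives $|g_{12}|+|g_{21}|\le(8/|\omega|+O(\e_0+r))\,r<24r$.
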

\begin{remark}
Indeed the relation $\underline{K}_r\subset \Delta^{-1}[0,r]$ still holds even if we disregard the condition $\min\{|g_{12}|, |g_{21}|\}>r/2(2+\sqrt{3})$ in the definition of  $\underline{K}_r$; see the proof of Proposition \ref{prop:ulbd-1}. These further restrictions are needed for a later key disjointness statement; see Proposition \ref{prop:disjoint} below.
\end{remark}

 As a direct corollary of Proposition \ref{prop:ulbd-1}, {we can give the proof of the Gaussian case in Theorem \ref{thm: measure estimates}}. 
 
\begin{proof}[Proof of Theorem \ref{thm: measure estimates}, Gaussian case]
Let $\e_0$ and $r_{\e_0}$ be as in Proposition \ref{prop:ulbd-1}. Then by \eqref{equ:incluupplow} we have 
\begin{align*}
\mu_K(\underline{K}_r)\leq \mu_K(\Delta^{-1}[0,r])\leq \mu_K(\overline{K}^+_r\cup \overline{K}^-_r),\qquad \forall\, 0<r<r_{\e_0}. 
\end{align*}
Now using the Haar measure descriptions \eqref{equ:Haar} and \eqref{equ:nhamea} we have  by a direct computation that
\begin{align*}
\mu_K(\underline{K}_r)\asymp_{\e_0,K} r^5\asymp_{\e_0,K}\mu_K(\overline{K}^{\pm}_r),\quad \forall\, 0<r<r_{\e_0}.
\end{align*}
In view of the above two estimates, the Gaussian case estimate in \eqref{equ:meestboth} follows immediately. 
\end{proof}

The remainder of this subsection is devoted to proving Proposition \ref{prop:ulbd-1}. First we describe the set of shortest vectors in lattices from the critical locus. Since by Theorem \ref{thm:crilocus}, $\mathfrak{C}_K=\tilde{\cR} h_0\mathcal{O}_K^2$, it suffices to understand the set of shortest vectors in $h_0\mathcal{O}_K^2$.
\begin{Lem}\label{lem:shvec-1}
Assume $K=\Q(\sqrt{-1})$. Let 
\begin{align}\label{equ:sk-1}
\cS_K:=
\left\{\begin{pmatrix} 1 \\ 0\end{pmatrix}, \begin{pmatrix} 0\\  1 \end{pmatrix},  \begin{pmatrix}  1 \\ -1 \end{pmatrix}, \begin{pmatrix} 1 \\ i\end{pmatrix}, \begin{pmatrix}
    1 \\ -1+i
\end{pmatrix}, \begin{pmatrix}
    1+i\\
    -1
\end{pmatrix} \right\}.
\end{align}
The set of all nonzero shortest vectors (with respect to the supremum norm) in $h_0 \mathcal{O}_K^2$ is $h_0\Z[i]^{\times}\cS_K$, which contains $24$ vectors of length $r_K$. 

\end{Lem}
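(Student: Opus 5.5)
The plan is a direct computation: reduce to a finite search over Gaussian integer pairs, then check the survivors by hand.

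\textbf{Reduction to an explicit inequality.} For $\bm{v}=(x,y)^t\in\Z[i]^2$, \eqref{eq:h_0} gives
\begin{align*}
h_0\begin{pmatrix} x\\ y\end{pmatrix}=r_K\begin{pmatrix} x+\omega y\\ e^{-\frac{\pi}{12}i}(y-i\omega x)\end{pmatrix}.
\end{align*}
Hence
\begin{align*}
\|h_0\bm{v}\|=r_K\max\{|x+\omega y|,\,|y-i\omega x|\}.
\end{align*}
By Theorem \ref{thm:crilocus}, $h_0\cO_K^2\in\fC_K$, so $\sys(h_0\cO_K^2)=r_K$. Equivalently, $\max\{|x+\omega y|,|y-i\omega x|\}\geq 1$ for every nonzero $(x,y)\in\Z[i]^2$. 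The statement therefore reduces to determining the set
\begin{align*}
E:=\left\{(x,y)\in\Z[i]^2\smallsetminus\{\bm{0}\}:\ |x+\omega y|\leq 1,\ |y-i\omega x|\leq 1\right\},
\end{align*}
which is exactly the set of $\bm{v}$ with $\|h_0\bm{v}\|=r_K$.

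\textbf{Step 1: $E$ is finite and explicitly bounded.} Put $A:=\begin{pmatrix}1&\omega\\-i\omega&1\end{pmatrix}$, so $\det A=1+i\omega^2\neq 0$. Write $(a,b)^t=A(x,y)^t$ with $|a|,|b|\leq 1$ and invert:
\begin{align*}
x=\frac{a-\omega b}{1+i\omega^2},\qquad y=\frac{b+i\omega a}{1+i\omega^2}.
\end{align*}
This gives $|x|,|y|\leq (1+|\omega|)/|1+i\omega^2|$. Note $|\omega|^2=2-\sqrt3$, and compute $|1+i\omega^2|$ numerically. The resulting bound is a small constant, so only a short list of Gaussian integers of small modulus is possible for $x$ and $y$.

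\textbf{Step 2: unit symmetry and enumeration.} Multiplying $(x,y)$ by a unit $u\in\Z[i]^\times$ multiplies both coordinates of $h_0\bm{v}$ by $u$, which has modulus $1$. So $E$ is $\Z[i]^\times$-invariant, and we may normalize, e.g.\ take the first nonzero coordinate to be $1$ or $1+i$ up to units. We then evaluate $|x+\omega y|^2$ and $|y-i\omega x|^2$ exactly in $\Q(\sqrt3)$ over the finite list, using $\omega=\tfrac12+(1-\tfrac{\sqrt3}{2})i$. For instance, $(1,0)$ gives the values $1$ and $|\omega|^2=2-\sqrt3<1$, so it lies in $E$. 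The six representatives in $\cS_K$ should be exactly the survivors (each with maximum equal to $1$), and every other candidate should have maximum $>1$.

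\textbf{Step 3: counting.} The six unit orbits $\Z[i]^\times\bm{s}$, $\bm{s}\in\cS_K$, are pairwise distinct and each has size $4$, since no nonzero vector is fixed by a nontrivial unit. Distinctness can be read off from the normalization used in Step 2. This gives $|E|=24$.

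\textbf{Main obstacle.} The hard part is the bookkeeping in Step 2: keeping the candidate list small and verifying the borderline cases exactly rather than numerically, since several vectors should sit precisely on norm $1$. To control this, I would first sharpen Step 1 by using the two constraints separately. Given $y$, the condition $|x+\omega y|\leq1$ forces $x$ to be a Gaussian integer within distance $1$ of $-\omega y$, which leaves at most about four choices of $x$ per $y$. I would then check the second inequality with exact arithmetic in $\Q(\sqrt3)$.
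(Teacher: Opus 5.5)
Your proposal is correct and follows essentially the same route as the paper. Your bound $(1+|\omega|)/|1+i\omega^2|$ is exactly the paper's bound $\|\bm v\|\le (1+|\omega|)r_K^2<1.7$, since $\det h_0=1$ gives $|1+i\omega^2|=r_K^{-2}$. From there both arguments reduce to a finite direct computation of norms: the paper prunes the candidate list by primitivity, you prune by unit normalization. Both then count $6\cdot 4=24$ shortest vectors using the fact that unit orbits are free.
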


\begin{remark}\label{rmk:shvecdes}
Note that $\omega=|\omega|e^{\frac{\pi i}{12}}$ with $|\omega|=\frac{\sqrt{2}(\sqrt{3}-1)}{2}= 0.517638\cdots$. Moreover, we have by direct computation that $\omega-i=e^{-\frac{\pi i}{3}}$ and $\omega-(1+i)=e^{-\frac{2\pi i}{3}}$.  From these identities, we have the following more explicit descriptions of these shortest vectors using polar coordinates:  
$$
h_0\begin{pmatrix} 1 \\ 0\end{pmatrix}=r_K\begin{pmatrix} 1 \\ |\omega|e^{-\frac{\pi i}{2}}\end{pmatrix},\quad h_0\begin{pmatrix} 1 \\ i\end{pmatrix}=r_Ke^{\frac{\pi i}{6}}\begin{pmatrix} 1 \\ |\omega|e^{\frac{\pi i}{6}} \end{pmatrix}, \quad  h_0\begin{pmatrix} 1+i \\ -1\end{pmatrix}=r_Ke^{-\frac{\pi i}{3}}\begin{pmatrix} 1 \\  |\omega|e^{\frac{5\pi i}{6}} \end{pmatrix},
$$ 
$$
 h_0\begin{pmatrix} 1 \\ -1+i\end{pmatrix}=r_Ke^{-\frac{3\pi i}{4}}\begin{pmatrix} |\omega|e^{-\frac{\pi i}{2}} \\ 1\end{pmatrix}, h_0\begin{pmatrix} 0 \\ 1\end{pmatrix}=r_Ke^{-\frac{\pi i}{12}}\begin{pmatrix} |\omega|e^{\frac{\pi i}{6}} \\ 1\end{pmatrix}, \quad h_0\begin{pmatrix} 1 \\ -1\end{pmatrix}=r_Ke^{\frac{13\pi i}{12}}\begin{pmatrix} |\omega|e^{\frac{5\pi i}{6}} \\ 1\end{pmatrix}.
$$ 
\end{remark}

\begin{proof}[Proof of Lemma \ref{lem:shvec-1}]
Let $\tilde{\cS}\subset \mathcal{O}_K^2$ be such that $h_0\tilde{\cS}$ consists of all shortest nonzero vectors in $h_0\mathcal{O}_K^2$,  we would like to show $\tilde{\cS}=\Z[i]^{\times}\cS_K$. Clearly, 
$$
\tilde{\cS}\subset (\mathcal{O}_K^2)_{\rm pr}:=\{\bm{v}\in \mathcal{O}_K^2: (\bm{v})=\mathcal{O}_K\}.
$$ 
Here $(\bm{v})$ denotes the ideal generated by the two entries of $\bm{v}$. 
We also have $\|h_0\bm{v}\|\leq r_K$ for any $\bm{v}\in \tilde{\cS}$ since $h_0\Z[i]^2$ contains a vector of length $r_K$.
Now recall we have used $\|\cdot\|$ for the supremum norm both on $G\subset \mathrm{M}_{2}(\C)$ and on $\C^2$. Note that $\|g\|=\|g^{-1}\|$ for any $g\in G$ and $\|h^{-1}_0\bm{v}\|\leq r_K(1+|\omega|)\|\bm{v}\|$ for any $\bm{v}\in \C^2$. Hence we have for any $\bm{v}\in \tilde{\cS}$,
\begin{align*}
\|\bm{v}\|=\|h_0^{-1}h_0\bm{v}\|\leq r_K(1+|\omega|)\|h_0\bm{v}\|\leq (1+|\omega|)r^2_K<1.7.
\end{align*}
This shows that 
\begin{align*}
\tilde{\cS}&\subset  \{\bm{v}\in \Z[i]^2_{\rm pr}: \|\bm{v}\|< 1.7\}\\
&=\Z[i]^{\times}\left\{\begin{pmatrix} 1 \\ 0\end{pmatrix}, \begin{pmatrix} 0\\  1 \end{pmatrix},  \begin{pmatrix}  1 \\ \pm 1 \end{pmatrix}, \begin{pmatrix} 1 \\ \pm i\end{pmatrix}, \begin{pmatrix}
    1 \\ \pm (1-i)
\end{pmatrix}, \begin{pmatrix}
    1\\
    \pm(1+i)
\end{pmatrix},\begin{pmatrix}
    1+i\\
    \pm 1,\pm i
\end{pmatrix}
 \right\}.
\end{align*}
Finally a direct computation of the norms of the above vectors shows that $\tilde\cS=\Z[i]^{\times}\cS_K$ as desired.
\end{proof}
Combining Lemma \ref{lem:comarg} and Lemma \ref{lem:shvec-1} we have the following more explicit description of the sets $\Delta^{-1}[0,r]$ for all sufficiently small $r>0$.
\begin{Lem}\label{lem:moremandes-1}
Let $K=\Q(\sqrt{-1})$. There exists some $\e_0>0$ such that for any $0<r<r_{\e_0}$ with $r_{\e_0}$ as in Lemma \ref{lem:comarg}, we have
\begin{align}\label{equ:nebexpdes-1}
\Delta^{-1}[0,r]&=\cK_r h_0\mathcal{O}_K^2\cup w_0\cK_rw_0^{-1} h_1\mathcal{O}_K^2,
\end{align}
where 
\begin{align}\label{equ:calk_r}
\cK_r:=\left\{g\in \B_{G}(2\e_0)\cR: \|gh_0\bm{v}\|\geq r_Ke^{-r},\ \forall\, \bm{v}\in \cS_K\right\},
\end{align}
with $\cS_K$ as in \eqref{equ:sk-1}.
\end{Lem}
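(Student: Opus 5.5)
The plan is to prove the two inclusions of \eqref{equ:nebexpdes-1} separately, starting from Lemma \ref{lem:comarg}. That lemma gives $\Delta^{-1}[0,r]\subset \B_G(\e_0)\fC_K$ for $r<r_{\e_0}$. By \eqref{equ:ckr-1} we have $\fC_K=\cR h_0\cO_K^2\cup \cR h_1\cO_K^2$, and $h_1=w_0h_0w_0^{-1}$.

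\textbf{Inclusion ``$\subset$''.} Write $\Lambda\in\Delta^{-1}[0,r]$ as either $g r_\theta h_0\cO_K^2$ or $g r_\theta h_1\cO_K^2$, with $g\in\B_G(\e_0)$ and $r_\theta\in\cR$.
\begin{itemize}
\item In the first case set $g'=gr_\theta\in\B_G(\e_0)\cR\subset \B_G(2\e_0)\cR$. Every nonzero vector of $\Lambda$ has norm $\ge r_Ke^{-r}$, in particular the vectors $g'h_0\bm v$ with $\bm v\in\cS_K$. Hence $g'\in\cK_r$.
\item In the second case write $gr_\theta h_1=w_0(w_0^{-1}gw_0)(w_0^{-1}r_\theta w_0)h_0w_0^{-1}$. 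Here $w_0^{-1}r_\theta w_0=r_{-\theta}\in\cR$, and $w_0^{-1}\B_G(\e_0)w_0=\B_G(\e_0)$ because conjugation by $w_0$ only permutes and negates entries, so it preserves the sup norm. Since $w_0^{-1}\cO_K^2=\cO_K^2$, we get $\Lambda=w_0g''h_0\cO_K^2$ with $g''\in\B_G(\e_0)\cR$.
\item The norm is $w_0$-invariant, so the lattice $g''h_0\cO_K^2=w_0^{-1}\Lambda$ has the same systole as $\Lambda$. Thus $g''\in\cK_r$, and $\Lambda=w_0g''w_0^{-1}h_1\cO_K^2$, which lies in the second piece of \eqref{equ:nebexpdes-1}.
\end{itemize}

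\textbf{Inclusion ``$\supset$'' (the main step).} Let $g\in\cK_r$. We must show $\sys(gh_0\cO_K^2)\ge r_Ke^{-r}$, knowing only that $\|gh_0\bm v\|\ge r_Ke^{-r}$ for $\bm v\in\cS_K$.
\begin{itemize}
\item Units are harmless: $\|gh_0(u\bm v)\|=|u|\,\|gh_0\bm v\|$ with $|u|=1$. So the condition holds for all of $\Z[i]^\times\cS_K$, which by Lemma \ref{lem:shvec-1} is the full set of $24$ shortest vectors of $h_0\cO_K^2$.
\item By discreteness there is $\delta>0$ such that every nonzero $\bm w\in\cO_K^2\smallsetminus\Z[i]^\times\cS_K$ satisfies $\|h_0\bm w\|\ge r_K(1+\delta)$.
\item Write $g=g_1r_\theta$ with $g_1\in\B_G(2\e_0)$. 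Since $\|r_\theta\bm x\|=\|\bm x\|$,
\[
\|g\bm x\|\ge(1-4\e_0)\|r_\theta\bm x\|=(1-4\e_0)\|\bm x\|
\]
(an entrywise estimate, using that $g_1-\Id$ has sup norm below $2\e_0$). Hence every such $\bm w$ gives $\|gh_0\bm w\|\ge(1-4\e_0)(1+\delta)r_K$.
\item Choose $\e_0$ so small that $(1-4\e_0)(1+\delta)\ge1$. Then $\|gh_0\bm w\|\ge r_K\ge r_Ke^{-r}$. So $gh_0\cO_K^2\in\Delta^{-1}[0,r]$.
\item The second piece follows by applying $w_0$: the lattice $w_0gw_0^{-1}h_1\cO_K^2$ equals $w_0gh_0\cO_K^2$, and $w_0$ preserves norms.
\end{itemize}

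The one delicate point is fixing $\e_0$ uniformly. It depends only on $\delta$, which is a property of the single lattice $h_0\cO_K^2$. Once $\e_0$ is fixed, take $r_{\e_0}$ from Lemma \ref{lem:comarg}; the factor $2\e_0$ in $\cK_r$ only changes the constant.
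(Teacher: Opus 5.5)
Your proof is correct and follows essentially the same route as the paper. You localize via Lemma \ref{lem:comarg} and \eqref{equ:ckr-1}, use discreteness of $h_0\cO_K^2$ together with a uniform perturbation bound on $\B_G(2\e_0)\cR$ to reduce the systole condition to the vectors in $\Z[i]^{\times}\cS_K$, and conjugate by $w_0$ to handle the $h_1$ component. The differences are cosmetic: you treat the two inclusions separately where the paper packages them into one equivalence claim, and you make the perturbation estimate explicit through the factor $1-4\e_0$ where the paper only asserts that a suitable $\e_0$ exists.
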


\begin{proof}
Fix $\e_0>0$ to be determined. Let $r_{\e_0}>0$ be as in Lemma \ref{lem:comarg} and fix $0<r<r_{\e_0}$. Take any $\Lambda\in \Delta^{-1}[0,r]$. By Lemma \ref{lem:comarg} and \eqref{equ:ckr-1}, we may assume $\Lambda=gh_i\mathcal{O}_K^2$ for some $g\in \B_{G}(\e_0)\cR$ and $i=0,1$. 

If $\Lambda=gh_0\Z[i]^2$, then by definition $\Lambda\in \Delta^{-1}[0,r]$ if and only if 
\begin{align}\label{equ:necondeq}
\|gh_0\bm{v}\|\geq r_Ke^{-r},\quad \forall\, \bm{v}\in \Z[i]^2\smallsetminus\{\bm{0}\}.
\end{align}
We now claim that there exists some $\e_0>0$ such that for $g\in \B_{G}(\e_0)\cR$, \eqref{equ:necondeq} is equivalent to 
\begin{align}\label{equ:necondeqfin}
\|gh_0\bm{v}\|\geq r_Ke^{-r},\quad \forall\, \bm{v}\in \cS_K. 
\end{align}
This claim clearly implies $\Lambda\in \cK_r h_0\Z[i]^2$. To prove the claim, we only need to show \eqref{equ:necondeqfin} implies \eqref{equ:necondeq}. Note that by Theorem \ref{thm:crilocus} and Lemma \ref{lem:shvec-1}, $\|h_0\bm{v}\|=r_K$ if and only if $\bm{v}\in \Z[i]^{\times}\cS_K$. The discreteness of $\Z[i]^2\subset \C^2$ then implies that there exists some absolute $r_K'>r_K$ such that $\|h_0\bm{v}\|>r_K'$ for any nonzero $\bm{v}\in \Z[i]^2\smallsetminus (\Z[i]^{\times}\cS_K)$. Now we can take $\e_0>0$ sufficiently small such that $\|g'\bm{w}\|>\frac{r_K+r_K'}{2}>r_K$ for all $g'\in \B_{G}(2\e_0)\cR$ and $\bm{w}\in \C^2$ with $\|\bm{w}\|>r_K'$. Fix such $\e_0$. Then we have for $g\in \B_{G}(2\e_0)\cR$, 
\begin{align*}
\|gh_0\bm{v}\|>r_K>r_Ke^{-r},\quad\text{for any  nonzero $\bm{v}\in \Z[i]^2\smallsetminus (\Z[i]^{\times}\cS_K)$}.
\end{align*}
This shows that \eqref{equ:necondeqfin} implies \eqref{equ:necondeq}, whence the claim.

If $\Lambda=gh_1\Z[i]^2$, then we have $w^{-1}_0\Lambda=g_{w_0} h_0\Z[i]^2\in \Delta^{-1}[0,r]$ with $$
g_{w_0}:=w^{-1}_0gw_0\in w_0^{-1}\B_{G}(2\e_0)\cR w_0= \B_{G}(2\e_0)\cR.
$$ 
Thus by the previous discussion we have that $w_0^{-1}\Lambda \in \Delta^{-1}[0,r]$ if and only if $g_{w_0}\in \cK_r$, i.e. $g\in w_0\cK_rw_0^{-1}$. This shows that $\Lambda\in w_0\cK_rw_0^{-1} h_1\Z[i]^2$, and finishes the proof of the lemma.  
\end{proof}

\begin{remark}\label{rmk:expinequ}
In view of Lemma \ref{lem:moremandes-1}, 
for any $g=\begin{pmatrix}
g_{11} & g_{12}\\
g_{21} & g_{22}
\end{pmatrix}\in \B_{G}(\e)\cR$, we need to analyze the following inequalities
\begin{align*}
 \|gh_0\bm{v}\|\geq r_Ke^{-r}\quad \forall\, \bm{v}\in \cS_K.
\end{align*}
Using Remark \ref{rmk:shvecdes} we see that the above inequalities are equivalent to the following $6$ inequalities:
\begin{align}\label{equ:ineqgauss1}
\left|g_{11}+|\omega|e^{(-\frac{1}{2}+\frac{2k_1}{3})\pi i}g_{12}\right|\geq e^{-r}, \quad \forall\, k_1=0,1,2,
\end{align}
and
\begin{align}\label{equ:ineqgauss2}
 \left|g_{22}+|\omega|e^{(-\frac{1}{2}+\frac{2k_2}{3})\pi i}g_{21}\right|\geq e^{-r},\quad \forall\,  k_2=0,1,2.
\end{align}
\end{remark}

With all these preparations, we now give the proof of Proposition \ref{prop:ulbd-1}. 
\begin{proof}[Proof of Proposition \ref{prop:ulbd-1}.]
Fix some $\epsilon_0$ as given by Lemma \ref{lem:moremandes-1} and let $0<r<r_{\e_0}$ where $r_{\epsilon_0}$ is as in Lemma \ref{lem:comarg}. For any $g\in \B_{G}(\e_0)\cR$, by the above analysis we see that $gh_0\Z[i]^2\in \Delta^{-1}[0,r]$ if  and only if $g$ satisfies \eqref{equ:ineqgauss1} and \eqref{equ:ineqgauss2}. 

We first prove the inclusion $\underline{K}_r\subset \Delta^{-1}[0,r]$ in \eqref{equ:incluupplow}. In view of the definition of $\underline{K}_r$, it suffices to show  $\Lambda\in \Delta^{-1}[0,r]$ for any $\Lambda=gh_0\Z[i]^2$ with $g\in \B_{G}(\e_0)$ satisfying 
$$
1-r/2<|g_{11}|<1+r/2\quad  \text{and} \quad \max\{|g_{12}|, |g_{21}|\}<r/2.
$$ 
For this, since $\B_{G}(\e_0)\subset \B_{G}(\e_0)\cR$, in view of Lemma \ref{lem:moremandes-1} and Remark \ref{rmk:expinequ}, it suffices  to show $g$ satisfies \eqref{equ:ineqgauss1} and \eqref{equ:ineqgauss2}.  For \eqref{equ:ineqgauss1}, note that for any $k_1\in \{0,1,2\}$ we have
\begin{align*}
\left|g_{11}+|\omega|e^{(-\frac{1}{2}+\frac{2k_1}{3})\pi i}g_{12}\right|\geq |g_{11}|-|\omega||g_{12}|>1-\frac{r}{2}-0.3r>e^{-r},
\end{align*}
provided that $r>0$ is sufficiently small. For \eqref{equ:ineqgauss2}, note that 
$$
1-0.6r<\frac{1-(r/2)^2}{1+r/2}<|g_{22}|=\frac{|1+g_{12}g_{21}|}{|g_{11}|}< \frac{1+(r/2)^2}{1-r/2}<1+0.6r.
$$
Hence we can similarly estimate that for any $k_2\in \{0,1,2\}$, 
$$
\left|g_{22}+|\omega|e^{(-\frac{1}{2}+\frac{2k_2}{3})\pi i}g_{21}\right|\geq |g_{22}|-|\omega||g_{21}|>1-0.6r-0.3r>e^{-r},
$$
provided that $r>0$ is sufficiently small. This finishes the verification of \eqref{equ:ineqgauss1} and \eqref{equ:ineqgauss2}, whence $\underline{K}_r\subset \Delta^{-1}[0,r]$.

\medskip

Next, we show $ \Delta^{-1}[0,r]\subset \overline{K}^+_r\cup \overline{K}^-_r$ as in \eqref{equ:incluupplow}. In view of Lemma \ref{lem:moremandes-1}, it suffices to show that 
\begin{align*}
\cK_r h_0\Z[i]^2\subset \overline{K}_r^+=\overline{\cK}_rh_0\Z[i]^2\quad \text{and}\quad w_0\cK_rw_0^{-1} h_1\Z[i]^2\subset \overline{K}_r^-=w_0\overline{\cK}_rw_0^{-1}h_1\Z[i]^2.
\end{align*}
Hence, it suffices to show 
$
\cK_r\subset \overline{\cK}_r.
$
For this,  first note that
\begin{align*}
\B_{G}(\e_0)\cR\subset \tilde{\B}_{G}(\e_0):=\{g\in G: |g_{11}|, |g_{22}|\in (1-\e_0, 1+\e_0),\, \max\{|g_{12}|, |g_{21}|\}<\e_0\}. 
\end{align*}
Then by the definition of $\cK_r$ (see (\ref{equ:calk_r})) and Remark \ref{rmk:expinequ}, we only need to prove that for any  $g\in \tilde{\B}_G(\e_0)$  satisfying \eqref{equ:ineqgauss1} and \eqref{equ:ineqgauss2}, $g$ also satisfies
\begin{align}\label{equ:desestg'}
|g_{11}|\in (1-2r, 1+2r)\quad\text{and}\quad \max\{|g_{12}|, |g_{21}|\}< 24 r.
\end{align}
First note that we can find $k_2\in \{0,1,2\}$ such that 
$$
\ang(g_{22}, |\omega|e^{(-\frac{1}{2}+\frac{2k_2}{3})\pi i}g_{21})\in [\tfrac{2\pi}{3}, \pi]. 
$$
Then by \eqref{equ:ineqgauss2} and \eqref{equ:normineq} (noting that $|g_{21}|<\e_0$ and $|g_{22}|>1-\e_0$) we have for such $k_2$,
\begin{align*}
e^{-r}\leq \left|g_{22}+|\omega|e^{(-\frac{1}{2}+\frac{2k_2}{3})\pi i}{g_{21}}\right|\leq |g_{22}|-\frac{|\omega|}{4}|g_{21}|<|g_{22}|-\frac{1}{8}|g_{21}|.
\end{align*}
This further implies that 
\begin{align}\label{equ:useine1}
|g_{22}|> e^{-r}\quad \text{and}\quad |g_{21}|<8(|g_{22}|-e^{-r}). 
\end{align}
Next, we use inequalities \eqref{equ:ineqgauss1}. 
Rewriting $g_{11}=\frac{1+g_{12}g_{21}}{g_{22}}$ we see that the inequalities in \eqref{equ:ineqgauss1} become
\begin{align}\label{equ:ineqgauss1'}
\left|\tfrac{1}{g_{22}}+\left(\tfrac{g_{21}}{g_{22}}+|\omega|e^{(-\frac{1}{2}+\frac{2k_1}{3})\pi i}\right)g_{12}\right|\geq e^{-r},\quad k_1=0,1,2.
\end{align}
Note that $\frac{|g_{21}|}{|g_{22}|}<\frac{\e_0}{1-\e_0}$. By reducing $\e_0$ if necessary, we have 
\begin{align}\label{equ:inequinte1}
\ang(z, \tfrac{g_{21}}{g_{22}}+z)<\tfrac{\pi}{360},\qquad \forall\, z\in \C \ \text{with}\ |z|=|\omega|.
\end{align}
Now as before we can find $k_1\in \{0,1,2\}$ such that 
$$
\ang(\tfrac{1}{g_{22}}, |\omega|e^{(-\frac{1}{2}+\frac{2k_1}{3})\pi i}g_{12}) \in [\tfrac{2\pi}{3}, \pi].
$$ 
This, together with \eqref{equ:inequinte1}, implies that for such $k_1$, 
\begin{align}\label{equ:keyinequ2}
\ang(\tfrac{1}{g_{22}}, \left(\tfrac{g_{21}}{g_{22}}+|\omega|e^{(-\frac{1}{2}+\frac{2k_1}{3})\pi i}\right)g_{12}) \in [\tfrac{239\pi}{360}, \pi].
\end{align}
Now applying \eqref{equ:ineqgauss1'} and \eqref{equ:normineq} for $z_1=\frac{1}{g_{22}}$ and $z_2=\left(\tfrac{g_{21}}{g_{22}}+|\omega|e^{(-\frac{1}{2}+\frac{2k_1}{3})\pi i}\right)g_{12}$ with $k_1\in \{0,1,2\}$ such that \eqref{equ:keyinequ2} holds we get
\begin{align*}
e^{-r}\leq \left|\tfrac{1}{g_{22}}+\left(\tfrac{g_{21}}{g_{22}}+|\omega|e^{(-\frac{1}{2}+\frac{2k_1}{3})\pi i}\right)g_{12}\right|<\frac{1}{|g_{22}|}-\frac{1}{8}|g_{12}|.
\end{align*}
Here for the last inequality we used the estimate that 
$$
\frac{|\cos(\frac{239\pi}{360})|}{2}\left|\tfrac{g_{21}}{g_{22}}+|\omega|e^{(-\frac{1}{2}+\frac{2k_1}{3})\pi i}\right|>0.245(|\omega|-2\e_0)>0.245\times 0.517>\frac{1}{8},
$$
provided that $\e_0<0.003$. (Recall that $|\omega|=\frac{\sqrt{2}(\sqrt{3}-1)}{2}=0.517638\cdots$.) We thus get
\begin{align}\label{equ:useine2}
\frac{1}{|g_{22}|}> e^{-r}\quad \text{and}\quad |{g_{12}}|<8(\frac{1}{|g_{22}|}-e^{-r}). 
\end{align} 
Combining \eqref{equ:useine1} and \eqref{equ:useine2} we get
\begin{align*}
1-2r<e^{-r}<|g_{22}|<e^r<1+2r,
\end{align*}
and
\begin{align}\label{equ:gijest}
\max\{|g_{12}|, |g_{21}|\}< 8(e^r-e^{-r})<24 r. 
\end{align}
Finally using the above two estimates we get
\begin{align}\label{equ:g11est}
1-2r<(1-(24r)^2)e^{-r}<|g_{11}|=\frac{|1+g_{12}g_{21}|}{|g_{22}|}<(1+(24r)^2)e^r<1+2r. 
\end{align}
We have thus proved \eqref{equ:desestg'}, hence \eqref{equ:incluupplow} and the proposition. 
\end{proof}

\medskip

\subsection{Measure estimates of $\mu_K(\Delta^{-1}[0,r])$ for $K=\Q(\sqrt{-3})$.}
Recall the critical locus in this case is given in \eqref{equ:-3}. {Note also that in this case $\cO_K=\Z[j]$ with $j=e^{\frac{2\pi}{3}i}$ as before and $\Z[j]^{\times}=\{\pm j^k: k=0,1,2\}$.} The main goal here is to give the following upper and lower bound for $\Delta^{-1}[0,r]$. 
\begin{Prop}\label{prop:ulbd-3}
Assume $K=\Q(\sqrt{-3})$. There exists some $\e_0>0$ such that 
\begin{align}\label{equ:incluupplow-3}
\underline{K}_r\subset \Delta^{-1}[0,r]\subset \overline{K}^{+}_r\cup \overline{K}^{-}_r,\qquad \forall\, 0<r<r_{\e_0},
\end{align}
where for any $0<r<r_{\e_0}$,
\begin{align}\label{equ:lowbdkr-3}
\underline{K}_r:=\left\{ g\Z[j]^2: \begin{array}{l} g\in \B_{G}(\e_0),\, |g_{11}|\in (1-\tfrac{r}{4}, 1+\tfrac{r}{4}),\, 0<|g_{12}g_{21}|< \tfrac{r}{4},\\ r<|g_{21}|<\sqrt{r},\, {\mathrm{Arg}(-g_{21}),\mathrm{Arg}(\overline{g_{12}})\in (\tfrac{25\pi}{72}, \tfrac{35\pi}{72})}\end{array}\right\},
\end{align}
and
\begin{align}\label{equ:uppbdkr-3}
\overline{K}^{+}_r:=\overline{\cK}_r\Z[j]^2\quad \text{and}\quad \overline{K}^{-}_r:=w_0\overline{\cK}_rw_0^{-1}\Z[j]^2.
\end{align}
with
\begin{align}
\overline{\cK}_r:=\left\{ g\in \tilde{\B}_G(\e_0): |g_{11}| \in (e^{-r}, 1+6002r),  |g_{12}g_{21}|< 6002 r\right\}.
\end{align}
Here
\begin{align}\label{equ:tildebgedef}
\tilde{\B}_G(\e_0):=\left\{g
\in G: |g_{11}|, |g_{22}|\in (1-2\e_0, 1+2\e_0), |g_{21}|<2\e_0, |g_{12}|<\tfrac{\sqrt{3}}{3}+2\e_0\right\},
\end{align}
{and for any nonzero $z\in \C$, $\mathrm{Arg}(z)\in (-\pi, \pi]$ is the \textit{principal argument} of $z$. }
\end{Prop}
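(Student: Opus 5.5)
The plan follows the Gaussian case (Proposition \ref{prop:ulbd-1}). First I reduce the condition $\Lambda\in\Delta^{-1}[0,r]$ to finitely many explicit inequalities on the entries of a matrix $g$ near the critical locus. Then I extract the upper and lower bounds from these inequalities, using Lemma \ref{lemma: basic inequality} on the hard side.

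\textbf{Reduction.} By Lemma \ref{lem:comarg} and \eqref{equ:ckr-3}, for $r<r_{\e_0}$ every $\Lambda\in\Delta^{-1}[0,r]$ is of the form $g'\rho\, u(z)\Z[j]^2$ or $g'\rho\, u^-(z)\Z[j]^2$, with $g'\in\B_G(\e_0)$, $\rho\in\cR$ and $z\in\cF$. In the first case I write $\Lambda=g\Z[j]^2$ with $g=g'\rho u(z)$. Using $|z|<\tfrac{\sqrt3}{3}$ from \eqref{equ:keyprop}, this $g$ lies in $\tilde{\B}_G(\e_0)$ as defined in \eqref{equ:tildebgedef}. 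The second case is the $w_0$-conjugate of the first, since $w_0u(z)w_0^{-1}=u^-(-z)$ and $\cF=-\cF$; it therefore produces $\overline{K}^-_r$ by symmetry.

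Next I list the shortest vectors of $u(z)\Z[j]^2$. These are $\Z[j]^\times(1,0)^t$ and $\Z[j]^\times(0,1)^t$, together with, for special $z$ on $\partial\cF$, possibly a few vectors $(x,y)$ with $x,y$ units. By discreteness, as in Lemma \ref{lem:moremandes-1}, I get: for $g\in\tilde{\B}_G(\e_0)$ with $\e_0$ small, $g\Z[j]^2\in\Delta^{-1}[0,r]$ if and only if $\|g\bm v\|\ge e^{-r}$ for all $\bm v$ in a fixed finite set $\cS\subset\Z[j]^2$. The set $\cS$ consists of $(u,0)$ and $(x,u)$ with $u\in\Z[j]^\times$ and $|x|\le 2$. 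A compactness argument shows that vectors outside $\cS$ have $\|g\bm v\|$ bounded away above $1$ uniformly on $\tilde{\B}_G(\e_0)$.

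\textbf{Upper bound $\Delta^{-1}[0,r]\subset\overline{K}^+_r\cup\overline{K}^-_r$.} I apply the inequalities to the vectors in $\cS$ in turn.
\begin{itemize}
\item The vector $(1,0)$ gives $\max\{|g_{11}|,|g_{21}|\}\ge e^{-r}$. Since $|g_{21}|<2\e_0$, this forces $|g_{11}|>e^{-r}$.
\item The vector $(0,1)$ gives $\max\{|g_{12}|,|g_{22}|\}\ge e^{-r}$. Since $|g_{12}|<0.6$, this forces $|g_{22}|>e^{-r}$.
\item The main obstacle is bounding $|g_{12}g_{21}|$. We have $g_{11}g_{22}=1+g_{12}g_{21}$, which only gives $|1+g_{12}g_{21}|\ge e^{-2r}$, and this does not yet control $|g_{12}g_{21}|$.
\end{itemize}
To handle the last point, I use the vectors $(x,u)$ with $x,u$ units. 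The second coordinate is $g_{21}x+g_{22}u$. Since the six units are spaced by $\pi/3$, I can choose $x$ so that $\ang(g_{22}u,g_{21}x)\in[\tfrac{5\pi}{6},\pi]$. Then \eqref{equ:normineq} gives $|g_{21}x+g_{22}u|\le|g_{22}|-c|g_{21}|$.

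The difficulty is that the first coordinate $g_{11}x+g_{12}u$ must then also be shown to have modulus $<e^{-r}$, which needs $|x+g_{12}u/g_{11}|<1$. Here the hexagon geometry of $\cF$ enters. I would first try pairing the choice of $x$ with a choice of $u$, using the three admissible rotations of each, so that both constraints hold simultaneously. The fallback is a case split according to $\mathrm{Arg}(g_{12}g_{21})$:
\begin{itemize}
\item When $g_{12}g_{21}$ is not close to the negative real axis, $|1+g_{12}g_{21}|\ge e^{-2r}$ combined with $|g_{12}g_{21}|$ small already forces $|g_{12}g_{21}|\ll r$, unless the product is nearly real positive, in which case it only bounds $|g_{11}g_{22}|$ from above.
\item The remaining sector, near the negative real axis, is handled by the vectors $(x,u)$ via Lemma \ref{lemma: basic inequality}.
\end{itemize}
Either way I obtain $|g_{12}g_{21}|\ll r$, and then $|g_{11}|=|1+g_{12}g_{21}|/|g_{22}|<1+O(r)$. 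Tracking the constants (as in \eqref{equ:gijest}) gives the explicit $6002$.

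\textbf{Lower bound $\underline{K}_r\subset\Delta^{-1}[0,r]$.} This is a direct verification of the inequalities for each $\bm v\in\cS$.
\begin{itemize}
\item For $(u,0)$: $|g_{11}|>1-r/4>e^{-r}$.
\item For $(x,u)$ with $x=0$: $|g_{22}|=|1+g_{12}g_{21}|/|g_{11}|\ge(1-r/4)/(1+r/4)>e^{-r}$.
\item For $(x,u)$ with $x\ne0$: the norm is large because $|g_{12}|$ is small. Indeed $|g_{12}|<\tfrac{r}{4}\cdot\tfrac1r=\tfrac14$, using $|g_{21}|>r$ and $|g_{12}g_{21}|<r/4$. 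So the first coordinate has modulus at least $|g_{11}|-|g_{12}|$, which is close to $1$. Where that is not enough, the argument windows $\mathrm{Arg}(-g_{21}),\mathrm{Arg}(\overline{g_{12}})\in(\tfrac{25\pi}{72},\tfrac{35\pi}{72})$ pin down the directions of $g_{21}$ and $g_{12}$ relative to the sixth roots of unity. This lets me check that no unit combination makes both coordinates of $g\bm v$ shorter than $e^{-r}$.
\end{itemize}
The extra conditions $r<|g_{21}|<\sqrt r$ are not needed for this inclusion; they are kept, as in the Gaussian case, for the later disjointness statement and for the $r^3\log(1/r)$ volume count.
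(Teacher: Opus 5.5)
Your reduction to a finite set of vectors and your lower-bound check match the paper: the argument windows give $\ang(g_{11},g_{12}e^{k\pi i/3})<\pi/2$ for $k=0,1,2$ and $\ang(g_{22},g_{21}e^{-k\pi i/3})<\pi/2$ for $k=3,4,5$. However, the upper bound has a genuine gap exactly at the step you call the main obstacle, and your fallback case split is the wrong way round. Write $g_{12}g_{21}=r_1e^{iS}$. The determinant gives only $1+2r_1\cos S+r_1^2=|g_{11}g_{22}|^2\ge e^{-4r}$. This forces $r_1\ll r$ precisely when $\cos S$ is negative and bounded away from $0$, i.e.\ when the product lies near the \emph{negative} real axis; that is the paper's easy Case~1 ($\ang(g_{12}g_{21},1)>\tfrac{37\pi}{72}$). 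When $\cos S\ge 0$ (e.g.\ $g_{12}g_{21}\in i\R_{>0}$ or $\R_{>0}$), the determinant says nothing about $r_1$, and that sector is where all the work lies. (Also, vectors $(x,y)^t$ with $x,y$ units are shortest vectors of $u(z)\Z[j]^2$ whenever $|x+yz|\le 1$, e.g.\ all $36$ of them for $z=0$, not only for special $z\in\partial\cF$. Your $\cS$ contains them, so the reduction survives, but these are exactly the vectors that couple the two coordinates.)

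On that sector your primary plan does not go through.
\begin{itemize}
\item Since $\|g(\lambda\bm v)\|=\|g\bm v\|$ for units $\lambda$, only $w=u/x$ matters. So there is a single six-valued parameter, not independent rotations of $x$ and $u$.
\item Write $g_{11}=a_1e^{i\alpha_1}$, $g_{22}=a_2e^{i\alpha_2}$, $g_{12}=b_1e^{i\theta_1}$, $g_{21}=b_2e^{i\theta_2}$. The two relevant angles are $\theta_1'=\theta_1+\arg w-\alpha_1$ and $\theta_2'=\theta_2-\arg w-\alpha_2$, and they sum to $S-(\alpha_1+\alpha_2)\approx S$ for every $w$.
\item If you choose $w$ so that $\theta_2'\in[\tfrac{5\pi}{6},\tfrac{7\pi}{6}]$, then $\theta_1'\in[S-\tfrac{7\pi}{6},S-\tfrac{5\pi}{6}]$. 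For $S=\tfrac{\pi}{2}$ this angle can have size $\tfrac{\pi}{3}$, so the first coordinate may grow.
\item The required shrinkage is also quantitative. From $a_1a_2\le 1+r_1\cos S+r_1^2/2$ and $a_1\ge e^{-r}$, $a_2$ can exceed $1$ by about $r_1\cos S+2r$. Since $b_1$ can be as large as $\tfrac{\sqrt3}{3}$, the gain $b_2\cos\theta_2'$ must beat $b_1b_2\cos S$, which needs roughly $\cos\theta_2'<-\tfrac{\sqrt3}{3}\cos S$ with some margin.
\item Likewise the first coordinate only shrinks if $\cos\theta_1'<-b_1/(2a_1)\approx-0.29$; an obtuse angle is not enough.
\end{itemize}

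The missing ingredient is a simultaneous-choice statement like the paper's Lemma~\ref{lem:ropro}. Suppose $\ang(e^{iS},1)\le\tfrac{37\pi}{72}$ and $\ang(e^{i(\alpha_1+\alpha_2)},1)<\tfrac{\pi}{72}$. Then some $0\le k\le 5$ gives both $\cos(T^k\theta_1-\alpha_1)<-0.3$ and $\cos(T^{-k}\theta_2-\alpha_2)<-0.6\cos S-0.03$. This is an arc-length count on $\R/2\pi\Z$, with constants calibrated against $|g_{12}|<\tfrac{\sqrt3}{3}+2\e_0$. Feeding it into \eqref{equ:basineq} bounds both coordinates of $g(1,e^{k\pi i/3})^t$ by $1+2r-0.0005\,r_1$, and hence $r_1<6000r$. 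Without such an argument you have not established $|g_{12}g_{21}|\ll r$, and so not the inclusion $\Delta^{-1}[0,r]\subset\overline K^+_r\cup\overline K^-_r$.
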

\begin{remark}
 {Note that for $z=re^{i\theta}\neq 0$, $-z=re^{i(\theta+\pi)}$ and $\overline{z}=re^{-i\theta}$. From this one verifies that the  angular conditions  
    $$
    \mathrm{Arg}(-g_{21}), \mathrm{Arg}(\overline{g_{12}})\in (\tfrac{25\pi}{72}, \tfrac{35\pi}{72})
    $$ are equivalent, in terms of the principal arguments of $g_{21}$ and $g_{12}$, to \begin{align}\label{equ:anglerest}
        \mathrm{Arg}(g_{21})\in
\left(-\tfrac{47\pi}{72},-\tfrac{37\pi}{72}\right)
\quad\text{and}\quad 
\mathrm{Arg}(g_{12})\in
\left(-\tfrac{35\pi}{72},-\tfrac{25\pi}{72}\right)
    \end{align}}
\end{remark}
\begin{remark}
Similar as in the Gaussian case, the relation $\underline{K}_r\subset \Delta^{-1}[0,r]$ still holds if we disregard the condition $r<|g_{21}|<\sqrt{r}$. This condition is needed for the disjointness statement which is the key new ingredient to treat the divergent case; see Proposition \ref{prop:disjoint} below. We also note that the set $\tilde{\B}_G(\e_0)$ is defined so that 
\begin{align}\label{equ:tildebgedefrela}
\B_{G}(\e_0)\cR\{u(z): z\in \cF\}\subset \tilde{\B}_G(\e_0)\quad \text{and}\quad \B_{G}(\e_0)\cR\{u^-_z: z\in \cF\}\subset w_0\tilde{\B}_G(\e_0)w_0^{-1}.
\end{align}
Here the second relation follows from the first relation together with the facts that $w_0\B_{G}(\e_0)\cR w_0^{-1}=\B_{G}(\e_0)\cR$. 
\end{remark}
As before, we can use Proposition \ref{prop:ulbd-3} to give the {proof of the measure estimates of the Eisenstein case in Theorem \ref{thm: measure estimates}.}

\begin{proof}[Proof of Theorem \ref{thm: measure estimates}, Eisenstein case]
Similar as in the  measure estimates  of the Gaussian case, in view of the relation \eqref{equ:incluupplow-3} and the definitions of $\underline{K}_r$ and $\overline{K}_r^{\pm}$, the Eisenstein measure estimates in \eqref{equ:meestboth} follow from the following measure computations: 
\begin{align*}
\mu_K(\underline{K}_r)\asymp_{\e_0,K} r^3\log\left(\frac{1}{r}\right)\asymp_{\e_0, K} \mu_K(\overline{K}^{\pm}_r),\quad \forall\, 0<r<r_{\e_0}.
\end{align*}
{Indeed, let $\pi: G\to X$ be the natural projection map. Up to further reducing $\e_0$, we may assume for each $0<r<r_{\e_0}$, $\pi|_{\underline{\cK}_r}$ is injective, where
\begin{align*}
\underline{\cK}_r:=\left\{ g\in \B_{G}(\e_0): \begin{array}{l}  |g_{11}|\in (1-\tfrac{r}{4}, 1+\tfrac{r}{4}),\, 0<|g_{12}g_{21}|< \tfrac{r}{4},\\ r<|g_{21}|<\sqrt{r},\, \mathrm{Arg}(-g_{21}),\mathrm{Arg}(\overline{g_{12}})\in (\tfrac{25\pi}{72}, \tfrac{35\pi}{72})\end{array}\right\}.
\end{align*}
Thus for all $0<r\ll_{\e_0} 1$, using \eqref{equ:Haar} and \eqref{equ:nhamea} (and noting that $|g_{11}|\in (\frac12, \frac32)$ for all $g\in \underline{\cK}_r$) we have
\begin{align*}
\mu_K(\underline{K}_r)&\asymp_K m_G(\underline{\cK}_r)\asymp \vol_{\C}(B_{1})\vol_{\C^2}(B_2),
\end{align*}
where 
$$
B_1:=\left\{z\in \C: 1-\tfrac{r}{4}<|z|<1+\tfrac{r}{4}\right\}
$$ 
and 
\begin{align*}
B_2:=\left\{(z_1, z_2)\in \B_{\C^2}(\e_0)^2: 0<|z_1z_2|<\tfrac{r}{4},\, r<|z_1|<\sqrt{r},\, \mathrm{Arg}(-z_1),\mathrm{Arg}(\overline{z_2})\in (\tfrac{25\pi}{72}, \tfrac{35\pi}{72}) \right\}.
    \end{align*}
   Clearly we have $\vol_{\C}(B_1)\asymp r$. For $\vol_{\C^2}(B_2)$, using polar coordinates $z_1=r_1e^{i\theta_1}$ and $z_2=r_2e^{i\theta_2}$ we have for all $0<r\ll_{\e_0}\ll 1$,
    \begin{align*}
\vol_{\C^2}(B_2)&\asymp \int_{\{(r_1,r_2)\in (0,\e_0)^2\,:\, r_1r_2<\tfrac{r}{4},\, r<r_1<\sqrt{r} \}} r_1 r_2\, \dd r_1\dd r_2
=\int_r^{\sqrt{r}}\left(\int_0^{\min\{\e_0, r/(4r_1)\}}r_2\,\dd r_2\right) r_1\,\dd r_1\\
&\asymp_{\e_0}\int_r^{\sqrt{r}}\min\{1, r/r_1\}^2 r_1\, \dd r_1
=r^2\int_r^{\sqrt{r}}\,\frac{\dd r_1}{r_1}\asymp r^2\log\left(\frac{1}{r}\right).
        \end{align*}
        Combining the above two estimates we get 
        \begin{align*}
\mu_K(\underline{K}_r)\asymp_{K} \vol_{\C}(B_{1})\vol_{\C^2}(B_2)\asymp_{\e_0} r^3\log\left(\frac{1}{r}\right),\qquad (\forall\, 0<r\ll_{\e_0} 1),
            \end{align*}
            as desired. 
        The other claimed estimates that $\mu_K(\overline{K}^{\pm}_r)\asymp_{\e_0} r^3\log(\frac{1}{r})$ follow from similar but simpler computation.}
\end{proof}
\medskip

\subsubsection{Proof of Proposition \ref{prop:ulbd-3}}
The remaining of this subsection is devoted to proving Proposition \ref{prop:ulbd-3}. The proof strategy is similar to that of Proposition \ref{prop:ulbd-1}, {but the analysis needed here is different and much more delicate.} As before, we first list all the shortest vectors in lattices from the critical locus. 
\begin{Lem}\label{lem:shvec-3}
Assume $K=\Q(\sqrt{-3})$. Let 
\begin{align}\label{equ:sk-3}
\cS_K:=
\left\{\begin{pmatrix} 1 \\ 0\end{pmatrix}, \begin{pmatrix} 0\\  1 \end{pmatrix},   \begin{pmatrix}
    1\\
    z
\end{pmatrix}: z\in \Z[j]^{\times} \right\}.
\end{align}
Let $\cF$ be the fixed fundamental domain for $\C/\Z[j]$ as in \eqref{equ:fdeis}. For any $\Lambda=h\Z[j]^2$ with $h=\begin{pmatrix}
        1 & z\\
         0 & 1
    \end{pmatrix}$ or $h=\begin{pmatrix}
        1 & 0\\
         z & 1
    \end{pmatrix}$ for some $z\in \cF$, the set of all nonzero shortest vectors (with respect to the supremum norm) in $\Lambda$ is  contained in $h\Z[j]^{\times}\cS_K$.
\end{Lem}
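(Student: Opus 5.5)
Since $u(z)$ and $u^-(z)$ are conjugate under $w_0$ (namely $w_0u(z)w_0^{-1}=u^-(-z)$), and $w_0$ preserves both $\|\cdot\|$ and $\Z[j]^2$, I will first treat $h=u(z)$ with $z\in\cF$ and then transfer the result. Theorem \ref{thm:crilocus} gives $r_K=1$ and $u(z)\Z[j]^2\in\fC_K$. Hence every nonzero vector of $\Lambda$ has sup-norm at least $1$, and the shortest vectors are exactly the vectors of norm $1$. The vector $u(z)(1,0)^t=(1,0)^t$ already has norm $1$. So the task is to determine all $(p,q)\in\Z[j]^2\smallsetminus\{\bm 0\}$ with
\begin{align*}
\|u(z)(p,q)^t\|=\max\{|p+zq|,|q|\}\leq 1 .
\end{align*}

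I will split according to $q$, using that the nonzero elements of $\Z[j]$ have absolute value $1$ or at least $\sqrt3$, and that those of absolute value $1$ are exactly the six units.

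\emph{Case $q=0$.} Then $|p|\le1$ with $p\neq0$, so $p\in\Z[j]^{\times}$. The vector is $(p,0)^t=h\,p\,(1,0)^t\in h\Z[j]^{\times}\cS_K$.

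\emph{Case $q\ne0$.} Then $|q|\le1$ forces $q\in\Z[j]^{\times}$. Write $p=qp'$ with $p'\in\Z[j]$; the condition becomes $|p'+z|\le1$. If $p'=0$, the vector is $q\,u(z)(0,1)^t$. If $p'$ is a unit $\zeta$, the vector is $q\zeta\,u(z)(1,\zeta^{-1})^t$, and $\zeta^{-1}\in\Z[j]^{\times}$, so $(1,\zeta^{-1})^t\in\cS_K$. It remains to exclude $|p'|\ge\sqrt3$. By \eqref{equ:keyprop}, $|z|<\frac{\sqrt3}{3}$, so
\begin{align*}
|p'+z|\geq \sqrt3-\tfrac{\sqrt3}{3}=\tfrac{2\sqrt3}{3}>1 ,
\end{align*}
and this subcase is impossible. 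Hence every shortest vector lies in $h\Z[j]^{\times}\cS_K$.

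For $h=u^-(z)$, the argument is symmetric with the roles of the two coordinates swapped. Directly, the vector is $(p,zp+q)^t$. The case $p=0$ forces $q$ to be a unit. If $p\ne0$, then $p$ is a unit; writing $q=pq'$, the condition $|q'+z|\le1$ gives $q'\in\{0\}\cup\Z[j]^{\times}$ by the same estimate. The resulting vectors are $p\,u^-(z)(1,0)^t$ and $p\,u^-(z)(1,q')^t$, and these lie in $h\Z[j]^{\times}\cS_K$.

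The only point needing care is the gap between $1$ and $\sqrt3$ in the absolute values of elements of $\Z[j]$, together with \eqref{equ:keyprop}. I expect no real obstacle here. The argument does not need to decide whether $|p'+z|$ equals $1$ exactly, since the lemma only asserts containment.
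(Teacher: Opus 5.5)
Your proof is correct and follows essentially the paper's argument: you split on whether the second coordinate is $0$ or a unit, reduce to $|p'+z|\le 1$, and exclude non-unit $p'$ via $|p'+z|\ge \sqrt3-\tfrac{\sqrt3}{3}>1$, treating the $u^-(z)$ case symmetrically. A minor remark: the appeal to Theorem~\ref{thm:crilocus} is not needed, since $h(1,0)^t$ has norm exactly $1$ for both choices of $h$, so every shortest vector already has norm at most $1$.
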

\begin{proof}
First note that $|z|\leq \frac{\sqrt{3}}{3}$ for any  $z\in \cF$. Moreover, all the nonzero  non-unit element of $\Z[j]$ have absolute value at least $\sqrt{3}$. From this we see that
\begin{align}\label{equ:keobse}
\forall\, z\in \cF,\ \forall\, z'\in \Z[j]\ \ :\  \ |z+z'|\leq 1 \Rightarrow  z'\in \Z[j]^{\times}\cup\{0\}.
\end{align}
This is true since otherwise we would have $|z+z'|\geq |z'|-|z|\geq \sqrt{3}-\frac{\sqrt{3}}{3}=\frac{2\sqrt{3}}{3}>1$. 

We now prove this lemma. We only prove the case when $h=\begin{pmatrix}
        1 & z\\
         0 & 1
    \end{pmatrix}$; the other case can be treated similarly. By Theorem \ref{thm:crilocus} we know that the shortest nonzero vectors in $\Lambda$ are all of length $1$. We thus need to solve the inequalities
    \begin{align*}
    \left\|\begin{pmatrix}
    1 &z\\
    0 & 1\end{pmatrix}\begin{pmatrix}
    a \\ b\end{pmatrix}\right\|=\left\|\begin{pmatrix}
    a+bz \\ b\end{pmatrix}\right\|\leq 1\quad \Leftrightarrow\quad  \max\{|a+bz|, |b|\}\leq 1,\qquad \forall\, (a,b)\in \Z[j]^2\smallsetminus\{\bm{0}\}. 
    \end{align*}
   The inequality $|b|\leq 1$ shows that  $b$ is either $0$ or a unit in $\Z[j]$. If $b=0$, then $a\neq 0$ and the inequality $|a|=|a+bz|\leq 1$ forces that $a\in \Z[j]^{\times}$. If $b\in \Z[j]^{\times}$, then we have  $|ab^{-1}+z|=|a+bz|\leq 1$. Since $z\in \cF$, by \eqref{equ:keobse} we have $ab^{-1}\in \Z[j]^{\times}\cup \{0\}$, or equivalently, $a\in \Z[j]^{\times}\cup \{0\}$. This shows that all the shortest nonzero vectors of $\Lambda$ are contained in the set $h\Z[j]^{\times}\cS_K$, finishing the proof.
\end{proof}

Using similar arguments as in the proof of Lemma \ref{lem:moremandes-1}, but with Lemma \ref{lem:shvec-3} in place of Lemma \ref{lem:shvec-1}, we have the following more explicit description of the set $\Delta^{-1}[0,r]$ in this setting. 
\begin{Lem}\label{lem:moremandes-3}
Let $K=\Q(\sqrt{-3})$. There exists some $\e_0>0$ such that for any $0<r<r_{\e_0}$ with $r_{\e_0}$ as in Lemma \ref{lem:comarg}, we have
\begin{align}\label{equ:nebexpdes-3}
\Delta^{-1}[0,r]=&\left\{gu(z)\Z[j]^2: g\in \B_{G}(\e_0)\cR, z\in \cF, \|gu(z)\bm{v}\|\geq e^{-r},\ \forall\, \bm{v}\in \cS_K\right\}\\ \nonumber
&\bigcup \left\{gu(z)^-\Z[j]^2: g\in \B_{G}(\e_0)\cR, z\in \cF, \|gu(z)^-\bm{v}\|\geq e^{-r},\ \forall\, \bm{v}\in \cS_K\right\},
\end{align}
where $\cS_K$ is as in \eqref{equ:sk-3}.
\end{Lem}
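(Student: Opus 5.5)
The plan is to mirror the proof of Lemma \ref{lem:moremandes-1}, with Lemma \ref{lem:shvec-3} as the input on shortest vectors. The main new point is uniformity: the critical locus is no longer finitely many $\cR$-orbits but a family parametrized by $z\in\cF$. Fix $\e_0>0$, to be shrunk later, and let $0<r<r_{\e_0}$. By Lemma \ref{lem:comarg} and \eqref{equ:ckr-3}, every $\Lambda\in\Delta^{-1}[0,r]$ can be written as $\Lambda=g\,u(z)\Z[j]^2$ or $\Lambda=g\,u(z)^-\Z[j]^2$ with $g\in\B_G(\e_0)\cR$ and $z\in\cF$. Here we use that $\B_G(\e_0)\tilde\cR=\B_G(\e_0)\cR\cup\B_G(\e_0)\cR w_0$ and that $w_0u(z)\Z[j]^2=u(-z)^-\Z[j]^2$, and that $-\cF=\cF$ up to null sets (really, up to boundary identifications mod $\Z[j]$). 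For any such representation, $\Lambda\in\Delta^{-1}[0,r]$ holds iff $\|gh\bm v\|\ge e^{-r}$ for all nonzero $\bm v\in\Z[j]^2$, where $h=u(z)$ or $u(z)^-$ and $r_K=1$. Conversely, any lattice in either set on the right-hand side of \eqref{equ:nebexpdes-3} lies in $\Delta^{-1}[0,r]$ once we show the finite test set suffices. So the whole lemma reduces to a single claim.

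\textbf{Claim.} For $\e_0$ small, every $g\in\B_G(\e_0)\cR$, every $z\in\overline{\cF}$ and every nonzero $\bm v\in\Z[j]^2\smallsetminus\Z[j]^{\times}\cS_K$ satisfy $\|g h\bm v\|>1$.

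Granting the Claim, the condition over all of $\Z[j]^2\smallsetminus\{\bm 0\}$ reduces to the condition over $\Z[j]^\times\cS_K$. Since units have modulus $1$ and $g h(\epsilon\bm v)=\epsilon\, g h\bm v$, this is the same as the condition over $\cS_K$, which gives \eqref{equ:nebexpdes-3}.

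The Claim is the main obstacle, and I would prove it by a uniform gap estimate. By \eqref{equ:keprop}, $\overline{\cF}$ lies in the closed disc of radius $\sqrt3/3$. For $\bm v=(a,b)$ consider $u(z)\bm v=(a+bz,\,b)$.
\begin{itemize}
\item If $b\notin\Z[j]^\times\cup\{0\}$, then $|b|\ge\sqrt3$.
\item If $b=0$ and $a$ is not a unit, then $|a|\ge\sqrt3$.
\item If $b$ is a unit, then $|a+bz|=|ab^{-1}+z|$. When $ab^{-1}\notin\Z[j]^\times\cup\{0\}$, the argument behind \eqref{equ:keobse} gives $|ab^{-1}+z|\ge\sqrt3-\sqrt3/3=2\sqrt3/3$.
\end{itemize}
In all three cases $\|u(z)\bm v\|\ge 2\sqrt3/3>1$, uniformly in $z\in\overline{\cF}$.

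The vectors left over are those with $b\in\Z[j]^\times$ and $a\in\Z[j]^\times\cup\{0\}$, i.e.\ with $ab^{-1}\in\Z[j]^\times\cup\{0\}$. These are precisely the elements of $\Z[j]^\times\cS_K$, so they are excluded from the Claim. The case $u(z)^-$ follows by the same argument with the roles of the coordinates swapped.

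Finally, choose $\e_0$ so that $\|g'\bm w\|\ge(1-\e_0)\|\bm w\|$ for all $g'\in\B_G(\e_0)\cR$. This holds because $\cR$ preserves $\|\cdot\|$, and for $g'\in\B_G(\e_0)$ the matrix $g'^{-1}$ is close to the identity, so $\|g'^{-1}\|$ is close to $1$. Taking $(1-\e_0)\,2\sqrt3/3>1$ then yields the Claim with the strict inequality.
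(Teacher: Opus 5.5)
Your proposal is correct and follows the paper's route. The paper proves this lemma by rerunning the proof of Lemma \ref{lem:moremandes-1} with Lemma \ref{lem:shvec-3} as input, and your uniform gap $\|u(z)\bm v\|,\|u(z)^-\bm v\|\ge 2\sqrt3/3$ for $\bm v\notin\Z[j]^{\times}\cS_K$, $z\in\overline{\cF}$ (the computation behind \eqref{equ:keobse}) is exactly what replaces the discreteness gap $r_K'>r_K$ used there.

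One small slip in constants: the factor $1-\e_0$ is wrong and should be $1-2\e_0$. Write $g'=bR$ with $b\in\B_G(\e_0)$; each entry of $(b-\Id)R\bm w$ then has modulus less than $2\e_0\|\bm w\|$, which gives $\|g'\bm w\|\ge(1-2\e_0)\|\bm w\|$. So you need $(1-2\e_0)\,2\sqrt3/3>1$, which is harmless for small $\e_0$.
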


\medskip

{Recall $\tilde{\B}_G(\e_0)$ was defined in \eqref{equ:tildebgedef} and it satisfies the relations in \eqref{equ:tildebgedefrela}. }
Note that for any $g=\begin{pmatrix}
g_{11} & g_{12}\\
g_{21} & g_{22}
\end{pmatrix}\in {\tilde{\B}_G(\e_0)\cup w_0\tilde{\B}_G(\e_0)w_0^{-1}}$, the condition $\|g\bm{v}\|\geq e^{-r}$ for all $\bm{v}\in \cS_K$ is equivalent to the following inequalities 
\begin{align}\label{equ:ineq1-3}
\min\{|g_{11}|, |g_{22}|\}\geq e^{-r}, 
\end{align}
and
\begin{align}\label{equ:ineq2-3}
\max\left\{|g_{11}+g_{12}e^{\frac{k\pi i}{3}}|, |g_{22}+g_{21}e^{-\frac{k\pi i}{3}}| \right\}\geq e^{-r},\  \forall\, 0\leq k\leq 5.
\end{align}
We thus have 
\begin{align}\label{equ:uppbdest-3}
\Delta^{-1}[0,r]\subset \left\{g\Z[j]^2: g\in \tilde{\B}_G(\e_0)\cup w_0\tilde{\B}_G(\e_0)w_0^{-1}, \text{$g$ satisfies \eqref{equ:ineq1-3} and \eqref{equ:ineq2-3}}\right\}. 
\end{align}

In order to prove the second inclusion relation in \eqref{equ:incluupplow-3} we need the following technical lemma. Let $T: \R/2\pi\Z\to \R/2\pi \Z$ be defined such that $T(x)=x+\frac{\pi}{3}\Mod{2\pi}$ and identify $\R/2\pi \Z$ with $(-\pi, \pi]$. 
\begin{Lem}\label{lem:ropro}
For any $\theta_1,\theta_2\in \R/2\pi \Z$ such that $\ang(e^{i(\theta_1+\theta_2)}, 1)\leq \frac{37\pi}{72}$ and for any $\alpha_1,\alpha_2\in \R/2\pi\Z$ such that $\ang (e^{i(\alpha_1+\alpha_2)}, 1)<\frac{\pi}{72}$, then there exists $0\leq k\leq 5$ satisfying 
\begin{align*}
\cos(T^k(\theta_1)-\alpha_1)<-0.3\quad \text{and}\quad \cos(T^{-k}(\theta_2)-\alpha_2)<-0.6\cos(\theta_1+\theta_2)-0.03. 
\end{align*}

\end{Lem}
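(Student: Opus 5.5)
Write $\beta:=\theta_1+\theta_2$ and $\gamma:=\alpha_1+\alpha_2$, with representatives chosen so that $|\beta|\le\frac{37\pi}{72}$ and $|\gamma|<\frac{\pi}{72}$. For $0\le k\le5$ put
\[
\phi_k:=T^k(\theta_1)-\alpha_1,\qquad \psi_k:=T^{-k}(\theta_2)-\alpha_2 .
\]
The plan rests on one observation: $\phi_k+\psi_k\equiv\beta-\gamma=:S$ does not depend on $k$. So both required inequalities are conditions on the single angle $\phi_k$. Write $\phi_k=\pi+\delta_k$ with $\delta_k\in(-\pi,\pi]$. Then
\[
\cos\phi_k=-\cos\delta_k,\qquad \cos\psi_k=\cos(S-\pi-\delta_k)=-\cos(S-\delta_k).
\]
The two requirements of the lemma become
\begin{align*}
\cos\delta_k>0.3\quad\text{and}\quad \cos(S-\delta_k)>0.6\cos\beta+0.03 .
\end{align*}
As $k$ runs over $0,\dots,5$, the values $\delta_k$ are the six points of a coset of $\frac{\pi}{3}\Z$ modulo $2\pi$. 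The first condition says $|\delta_k|<\arccos(0.3)\approx 72.5^\circ$. This window has width about $145^\circ>2\cdot 60^\circ$, so the admissible set $D$ of such $\delta_k$ always contains at least two points, spaced $60^\circ$ apart. In particular $D$ contains a point in $[-30^\circ,30^\circ]$, a point $\ge 12.5^\circ$, and a point $\le -12.5^\circ$.

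By the symmetry $(\beta,\delta)\mapsto(-\beta,-\delta)$ (replace $\theta_i,\alpha_i$ by their negatives, which maps the coset to its negative and swaps $T$ with $T^{-1}$), I would reduce to $\beta\ge0$. Then I would split into two cases.

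\emph{Case $0\le\beta\le 42.5^\circ$.} Choose $\delta\in D$ with $|\beta-\delta|\le 30^\circ$. Such a point exists because $D$ is $60^\circ$-spaced and $[\beta-30^\circ,\beta+30^\circ]\subset(-72.5^\circ,72.5^\circ]$. Using $|S-\beta|<2.5^\circ$, we get
\[
\cos(S-\delta)\ge\cos(32.5^\circ)\approx0.84>0.63\ge0.6\cos\beta+0.03 .
\]

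\emph{Case $42.5^\circ\le\beta\le 92.5^\circ$.} Take the largest $\delta\in D$, so that $\delta\in[12.5^\circ,72.5^\circ)$. Then $0\le S-\delta\le \beta-10^\circ$ up to the small error, and since cosine decreases on $[0,\pi]$ it suffices to show
\[
f(\beta):=\cos(\beta-10^\circ)-0.6\cos\beta-0.03>0\quad\text{on }[42.5^\circ,92.5^\circ].
\]
Checking the endpoints gives $f(42.5^\circ)\approx0.37$ and $f(92.5^\circ)\approx0.13-(-0.026)-0.03>0.1$. For the interior, write $f(\beta)=A\cos(\beta-\beta_0)-0.03$ with explicit $A,\beta_0$; since $f$ is a sinusoid, positivity at the endpoints and on the interior follows from a crude bound.

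The main obstacle is the bookkeeping in this second case, near $\beta=\frac{37\pi}{72}$, where the margin is only about $0.1$. There I must carefully track the $\frac{\pi}{72}$ slack from $\gamma$ and the strict versus non-strict inequalities at the window edge $\arccos(0.3)$. I would do this with explicit numerical bounds on $\cos$ at the relevant angles rather than asymptotics. If the margin proved too thin, I would refine the choice of $\delta$ by using that $D$ may contain a third point once the window exceeds $120^\circ$ by enough.
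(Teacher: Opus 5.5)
Your proof is correct and is essentially the paper's argument. Both use $T^k(\theta_1)+T^{-k}(\theta_2)=\theta_1+\theta_2$ to reduce the lemma to a coset of $\frac{\pi}{3}\Z$ meeting an intersection of two arcs. The only difference is that the paper shows this intersection has length $>\frac{\pi}{3}$ (three sub-cases in $S$), whereas you exhibit an explicit coset point (two cases in $\beta$).

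One small correction in your second case: $S-\delta$ can be negative. For example, $\beta=42.5^\circ$ and $\delta\approx 72.5^\circ$ give $S-\delta\approx-30^\circ$. In that situation $|S-\delta|<75.05^\circ-\beta\le 32.6^\circ$, so directly $\cos(S-\delta)>0.84>0.63\ge 0.6\cos\beta+0.03$. Otherwise your bound $0\le S-\delta<\beta-10^\circ$ applies. Since $f(\frac{37\pi}{72})\approx 0.127$, no refinement of the choice of $\delta$ is needed.
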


\begin{proof}
Let $S:=\mathrm{Arg}(e^{i(\theta_1+\theta_2)}), s:=\mathrm{Arg}(e^{i(\alpha_1+\alpha_2)})\in (-\pi, \pi]$. By assumption we have $|S|\leq \frac{37\pi}{72}$ and $|s|\leq \frac{\pi}{72}$. Define 
$$
I_x:=\{\beta\in \R/2\pi\Z: \cos(\beta)<x\},\qquad \forall\, x\in (-1,1).
$$
Note that $I_x\subset \R/2\pi\Z$ is the arc $(-\pi, -\arccos(x))\cup (\arccos(x), \pi]$ of length $2\pi-2\arccos(x)$. 
Since 
\[T^k(\theta_1)+T^{-k}(\theta_2)=\theta_1+\theta_2, \,\, \forall \, 0\leq k\leq 5,\]
the lemma is equivalent to the existence of some $0\leq k\leq 5$ such that 
\begin{align*}
T^{-k}(\theta_2)-\alpha_2\in I_{-0.6\cos(S)-0.03}\cap (-I_{-0.3}+S-s).
\end{align*}
This can be guaranteed if we can show 
$$
J_{S,s}:=(I_{-0.6\cos(S)-0.03}-S)\cap (-I_{-0.3}-s)
$$ 
contains an arc of length larger than $\frac{\pi}{3}$ provided that  $|S|\leq \frac{37\pi}{72}$ and $|s|\leq \frac{\pi}{72}$. 
Note that since $|s|<\frac{\pi}{72}$, $-I_{-0.3}-s=I_{-0.3}-s$ always contains the arc 
$$
J_0:=(-\pi, -\tfrac{\pi}{72}-\arccos(-0.3))\cup (\arccos(-0.3)+\tfrac{\pi}{72}, \pi]\supseteq (-\pi, -\tfrac{11\pi}{18}]\cup [\tfrac{11\pi}{18}, \pi], 
$$ 
which is an arc centered at $-\pi$ and of length at least $\frac{7\pi}{9}$. 
On the other hand, since $|S|<\frac{37\pi}{72}$, $I_{-0.6\cos(S)-0.03}-S$ is the arc centered at $-\pi -S$ of length $2\pi-2\arccos(-0.6\cos(S)-0.03)$. Note that the function $S\mapsto 2\pi-2\arccos(-0.6\cos(S)-0.03)$ is even on $S\in [-\frac{37\pi}{72}, \frac{37\pi}{72}]$ and is increasing on $[0, \frac{37\pi}{72}]$. For simplicity, we assume $S\geq 0$. If $S< \frac{\pi}{3}$, then since 
$$
2\pi-2\arccos(-0.6\cos(S)-0.03)\geq 2\pi-2\arccos(-0.63)> \frac{101\pi}{180},
$$
using the above description of the arc $J_0$ we see that in this case $(I_{-0.6\cos(S)-0.03}-S)\cap J_0$ always contains an arc of length larger than $\frac{\pi}{3}$. 

Now we assume $\frac{\pi}{3}\leq S\leq \frac{37\pi}{72}$. Then for any such $S$, we have
$$
2\pi-2\arccos(-0.6\cos(S)-0.03)\geq 2\pi-2\arccos(-0.6\cos(\pi/3)-0.03)>\frac{7\pi}{9}.
$$
Note that $I_{-0.6\cos(S)-0.03}-S$ is an arc centered at $-\pi-S$ with length larger than $\frac{7\pi}{9}$. If $\tfrac{\pi}{3}\leq S\leq \frac{7\pi}{18}$, we have $-\pi-S\in J_0$ and it follows that $(I_{-0.6\cos(S)-0.03}-S)\cap J_0$ contains an arc of length at least $\frac{7\pi}{18}>\frac{\pi}{3}$. If $\tfrac{7\pi}{18}\leq S\leq \tfrac{37\pi}{72}$, then $(I_{-0.6\cos(S)-0.03}-S)\cap J_0$ contains an arc of length at least $f(S)$, where
\[f(S)=\frac{25}{18}\pi-\arccos(-0.6\cos(S)-0.03)-S. \]
A direct computation verifies that $f(S)$ is a decreasing function when $\tfrac{7\pi}{18}\leq S\leq \tfrac{37\pi}{72}$. Hence, $(I_{-0.6\cos(S)-0.03}-S)\cap J_0$ contains an arc of length at least $f(37\pi/72)>\frac{67\pi}{180}>\pi/3$. This finishes the proof of this lemma. 
\end{proof}

\begin{proof}[Proof of Proposition \ref{prop:ulbd-3}]
Fix $0<r<r_{\e_0}$. We first prove the inclusion $\underline{K}_r\subset \Delta^{-1}[0,r]$ in \eqref{equ:incluupplow-3}. Take any $g\Z[j]^2\in \underline{K}_r$ with $g\in \B_{G}(\e_0)$ satisfying 
$$
|g_{11}|\in (1-\tfrac{r}{4}, 1+\tfrac{r}{4}),\, 0<|g_{12}g_{21}|< \tfrac{r}{4},\, \mathrm{Arg}(-g_{21}), \mathrm{Arg}(\overline{g_{12}})\in (\tfrac{25\pi}{72}, \tfrac{35\pi}{72}).
$$
We would like to show $g\Z[j]^2\in \Delta^{-1}[0,r]$, or equivalently, to show $g$ satisfies \eqref{equ:ineq1-3} and \eqref{equ:ineq2-3}.  Clearly, $|g_{11}|> 1-\frac{r}{4}>e^{-r}$. Moreover, we have 
$$
|g_{22}|=\frac{|1+g_{12}g_{21}|}{|g_{11}|}\geq \frac{1-r/4}{1+r/4}\geq e^{-r}.
$$
Hence $g$ satisfies \eqref{equ:ineq1-3}. Next, we show $g$ satisfies \eqref{equ:ineq2-3}. For this, note that since $\max\{|g_{11}-1|, |g_{22}-1|\}<\e_0$, up to further assuming  $\e_0<0.04<\sin(\frac{\pi}{72})$, we may assume 
$$
\max\{\ang(g_{11}, 1), \ang(g_{22},1)\}<\frac{\pi}{72}.
$$ 
Together with the assumption $\mathrm{Arg}(-g_{21}),\mathrm{Arg}(\overline{g_{12}})\in (\tfrac{25\pi}{72}, \tfrac{35\pi}{72})$, we have
\begin{align}\label{equ:anglest}
\ang(g_{11}, g_{12}e^{\frac{k\pi i}{3}} ) <\frac{\pi}{2} \ \text{for $k=0,1,2$},
\quad
\text{and} 
\quad 
\ang(g_{22}, g_{21}e^{-\frac{k\pi i}{3}} ) <\frac{\pi}{2} \ \text{for $k=3,4,5$}.
\end{align}
This implies
$$
|g_{11}+g_{12}e^{\frac{k\pi i}{3}}|\geq |g_{11}|\geq e^{-r}, \forall \, k\in \{0,1,2\} \text{ and}\,\, |g_{22}+g_{21}e^{-\frac{k\pi i}{3}}|\geq |g_{22}|\geq e^{-r}, \,\, \forall \, k\in \{3,4,5\}.
$$
The above two estimates then clearly imply \eqref{equ:ineq2-3}, which finishes the proof of the first inclusion relation in \eqref{equ:incluupplow-3}.

\medskip

Next, we prove  $\Delta^{-1}[0,r]\subset \overline{K}^{+}_r\cup \overline{K}^{-}_r$ in \eqref{equ:incluupplow-3}. In view of the inclusion relation \eqref{equ:uppbdest-3}, take any $g\in \tilde{\B}_G(\e_0)\cup w_0\tilde{\B}_G(\e_0)w_0^{-1}$ satisfying \eqref{equ:ineq1-3} and \eqref{equ:ineq2-3}, it suffices to show that $g$ satisfies 
\begin{align}\label{equ:uppbdgoal-3}
|g_{11}|\in (e^{-r}, 1+6002r)\quad \text{and}\quad |g_{12}g_{21}|<6002r. 
\end{align}
If $g_{12}g_{21}=0$, then the second estimate in \eqref{equ:uppbdgoal-3} clearly holds. Moreover, we have by \eqref{equ:ineq1-3} that $|g_{11}|\geq e^{-r}>1-2r$ and $|g_{11}|=\frac{1}{|g_{22}|}\leq e^r<1+2r$. Hence the first estimate in \eqref{equ:uppbdgoal-3} also holds. Thus we may assume $g_{12}g_{21}\neq 0$. 
{Moreover, up to replacing $g$ by $w_0^{-1}gw_0$, we may assume $g\in \tilde{\B}_G(\e_0)$. } Now write 
$$
g_{11}=a_1 e^{i\alpha_1},\quad g_{22}=a_2 e^{i\alpha_2},\quad g_{12}=b_1e^{i\theta_1}\quad \text{and} \quad g_{21}=b_2 e^{i\theta_2}.
$$ 
By assumption, 
$$
a_1, a_2\in (1-2\e_0, 1+2\e_0),\quad 0<b_1<\tfrac{\sqrt{3}}{3}+2\e_0\quad \text{and} \quad 0<b_2<2\e_0.
$$ 
Moreover, note that 
$$
|g_{11}g_{22}-1|=|g_{12}g_{21}|<2\e_0.
$$
From this we see that 
$$
\cos(\alpha_1+\alpha_2)>\frac{a_1^2a_2^2+1-4\e_0^2}{2a_1a_2}>\frac{(1-2\e_0)^2+1-4\e_0^2}{2(1+2\e_0)^2}=\frac{1-2\e_0}{(1+2\e_0)^2}>1-7\e_0.
$$
Thus up to reducing $\e_0$ further, we may assume
\begin{align}\label{equ:alphaest}
\ang(e^{i(\alpha_1+\alpha_2)}, 1)<\frac{\pi}{72}.
\end{align}
We now start to prove \eqref{equ:uppbdgoal-3}. We divide the discussion into two cases. Let 
$$
\theta_0:=\frac{\pi}{2}+\frac{\pi}{72}=\frac{37\pi}{72}.
$$ 

\underline{\textbf{Case 1:}} $\ang(g_{12}g_{21},1)
\in (\theta_0,\pi]$.

Note that $|\cos(\theta_0)|>0.04$ and  $|g_{12}g_{21}|\leq 2\epsilon_0<0.02$, we can apply  \eqref{equ:normineq} to get 
\[ \abs{g_{11}g_{22}}=\abs{1+g_{12}g_{21} }\leq 1-0.02|g_{12}g_{21}|\leq 1. \]
Since $\min\{|g_{11}|, |g_{22}|\}\geq e^{-r}$ (by \eqref{equ:ineq1-3}), we have
\[ 
 \max\{|g_{11}|, |g_{22}|\}\leq \frac{1}{\min\{|g_{11}|, |g_{22}|\}}\leq e^{r}<1+2r. \]
Moreover, we have
$
1-2r\leq e^{-2r}\leq |g_{11}g_{22}|\leq 1-0.02|g_{12}g_{21}|,
$ 
which implies that 
$$
|g_{12}g_{21}|\leq 100 r.
$$
Thus in this case $g$ satisfies \eqref{equ:uppbdgoal-3}. 
\medskip

\underline{\textbf{Case 2:}} $\ang(g_{12}g_{21},1)\in [0, \theta_0]$. 

Note that the assumption $\ang(g_{12}g_{21},1)\in [0, \theta_0]$ is equivalent to $\ang(e^{i(\theta_1+\theta_2)},1)\leq \theta_0=\frac{37\pi}{72}$. Moreover, by \eqref{equ:alphaest} we have $\ang( e^{i(\alpha_1+\alpha_2)},1)<\frac{\pi}{72}$. Hence we can apply Lemma \ref{lem:ropro} to find some $0\leq k\leq 5$ such that 
\begin{align}\label{equ:keyinteinef}
\cos(T^k(\theta_1)-\alpha_1)<-0.3\quad\text{and}\quad \cos(T^{-k}(\theta_2)-\alpha_2)<-0.6\cos(\theta_1+\theta_2)-0.03.
\end{align}
Now apply \eqref{equ:ineq2-3} for such $k$ we get
\begin{align}\label{eq: Eisenstein condition 2}
    \max\left\{ |a_1+b_1e^{i(T^k(\theta_1)-\alpha_1)}|,|a_2+b_2e^{i(T^{-k}(\theta_2)-\alpha_2)}| \right\}\geq e^{-r}.
\end{align}
Below let $r_1=|g_{12}g_{21}|$ and set $\theta_{k,1}':=T^k(\theta_1)-\alpha_1$ and $\theta_{k,2}':=T^{-k}(\theta_2)-\alpha_2$. Note that by \eqref{equ:basineq} we have
\begin{align*}
a_1a_2=|g_{11}g_{22}|=|1+g_{12}g_{21}|=|1+r_1e^{i(\theta_1+\theta_2)}|\leq 1+r_1\cos(\theta_1+\theta_2)+\frac{r_1^2}{2}.
\end{align*}
Using also $\min\{a_1,a_2\}\geq e^{-r}>(1+2r)^{-1}$, this shows that 
\begin{align}\label{equ:a12estf}
\max\{a_1,a_2\}=\frac{a_1a_2}{\min\{a_1,a_2\}}\leq (1+r_1\cos(\theta_1+\theta_2)+r_1^2/2)(1+2r).
\end{align}
Moreover, by \eqref{equ:a12estf}, \eqref{equ:basineq} and the first inequality in \eqref{equ:keyinteinef}, 
we have for the above $k$,
\begin{align*}
|a_1+b_1e^{i\theta_{k,1}'}|&\leq a_1+b_1\cos(\theta_{k,1}')+\frac{b_1^2}{2a_1}\\
&\leq (1+r_1\cos(\theta_1+\theta_2)+r_1^2/2)(1+2r)+\left(\cos(\theta_{k,1}')+\frac{b_1}{2a_1}\right)\frac{r_1}{b_2}\\
&<1+2r+r_1\left((1+2r)(\cos(\theta_1+\theta_2)+\e_0+\frac{-0.3+\sqrt{3}/6+4\e_0}{2\e_0}\right)\\
&<1+2r-10 r_1.
\end{align*}

Here for the second last inequality we used that for any $\e_0>0$ sufficiently small,
\[\cos(\theta_{k,1}')+\frac{b_1}{2a_1}<-0.3+\frac{\sqrt{3}/3+2\e_0}{2(1-2\e_0)}<-0.3+\frac{\sqrt{3}}{6}+4\e_0<0,\]
and $0<b_2<2\e_0$, and for the last inequality we used that 
$$
\frac{-0.3+\frac{\sqrt{3}}{6}+4\e_0}{2\e_0}<\frac{-0.01}{2\e_0}< -20,\quad \text{provided that $0<\e_0<0.00025$}. 
$$
Similarly, by \eqref{equ:a12estf}, Lemma \ref{lemma: basic inequality} and the second inequality in \eqref{equ:keyinteinef} we have for such $k$, 
\begin{align*}
|a_2+b_2e^{i\theta_{k,2}'}|&\leq a_2+b_2\cos(\theta_{k,2}')+\frac{b_2^2}{2a_2}\\
&\leq (1+r_1\cos(\theta_1+\theta_2)+r_1^2/2)(1+2r)+\left(\frac{\cos(\theta_{k,2}')}{b_1}+\frac{b_2}{2a_2b_1}\right)r_1\\
&<1+2r+r_1\left((1+2r)\cos(\theta_1+\theta_2)+\e_0+\frac{-0.6\cos(\theta_1+\theta_2)-0.03+3\e_0}{\sqrt{3}/3+2\e_0}\right)\\
&<1+2r-0.0005r_1.
\end{align*}
Here the second to the last inequality follows from the fact that
\[ -0.6\cos(\theta)-0.03<0, \,\, \forall\, \theta\in [0,\tfrac{37\pi}{72}], \]
and provided that $\e_0>0$ is sufficiently small. The last inequality holds since  $0.6>\sqrt{3}/3$ and 
\[ \cos(\theta)\left(1-\frac{0.6}{\sqrt{3}/3}\right)-\frac{0.03}{\sqrt{3}/3}<-0.0005, \,\, \forall \, \theta\in [0,\tfrac{37\pi}{72}]. \]
Using these two estimates together with \eqref{equ:ineq2-3} we get for above $k$,
\begin{align*}
1-r<e^{-r}\leq \max\{|a_1+b_1e^{i\theta_{k,1}'}|, |a_2+b_2e^{i\theta_{k,2}'}|\}\leq  1+2r-0.0005r_1\quad \Rightarrow \quad r_1<6000 r.
\end{align*}
Moreover, we have 
$$
|g_{11}g_{22}|=|1+g_{12}g_{21}|\leq 1+r_1\leq  1+6000r,
$$
which together with \eqref{equ:ineq1-3} implies that
\begin{align*}
\max\{|g_{11}|, |g_{22}|\}\leq (1+6000r)e^r<1+6002 r.
\end{align*}
Hence we see that \eqref{equ:uppbdgoal-3} also holds in this case. 
\end{proof}

\medskip

\subsection{Measure estimates for the thickenings}
The shrinking targets we will work with are actually certain thickenings of these level sets along the direction of the diagonal flow $\{g_{s}\}$. Thus we prove the following measure estimates of these thickenings. 
\begin{Prop} \label{prop: measure estimates the thickenings}
Let $K=\Q(\sqrt{-1})$ or $K=\Q(\sqrt{-3})$.
Let  $s_0,r_0, c\in (0, 1]$ be any three parameters. Suppose $\{\Delta'_r\}_{0<r<r_0}\subset X$ is a family of Borel sets satisfying
\begin{align*}
\Delta'_r\subset \Delta^{-1}[0,r]\quad \text{and}\quad \mu_K(\Delta'_r)\geq c \mu_K(\Delta^{-1}[0,r]),\quad \forall\, 0<r<r_0. 
\end{align*}
Then for any $\tau>0$ we have 
$$
\mu_K(\widetilde{\Delta}'_{\tau,{s_0}}(r))\asymp_{\tau, s_0,c} r^{-1}\mu_K(\Delta^{-1}[0,r]),\qquad \text{as $r\to 0^+$},
$$
where
$$
\widetilde{\Delta}'_{\tau, {s_0}}(r):=\bigcup_{0\leq s<s_0}g_{-\tau s}\Delta'_r.
$$
\end{Prop}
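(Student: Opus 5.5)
We bound $\mu_K(\widetilde{\Delta}'_{\tau,s_0}(r))$ from above and below by $r^{-1}\mu_K(\Delta^{-1}[0,r])$ separately. The basic mechanism is this: $\Delta$ changes at a bounded rate along the flow, namely $|\Delta(g_t\Lambda)-\Delta(\Lambda)|\le |t|$, since $\|g_t\bm v\|\in[e^{-|t|}\|\bm v\|,e^{|t|}\|\bm v\|]$. Moreover, the level sets have width $\asymp r$ transversally to the flow, so the orbit of a point spends time only $O(r)$ inside $\Delta^{-1}[0,r]$ on each visit near the critical locus.

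\emph{Upper bound.} Discretize the thickening. Split $[0,s_0)$ into $N\asymp \tau s_0/r$ intervals of length $r/\tau$, so that
\[
\widetilde{\Delta}'_{\tau,s_0}(r)\subset\bigcup_{n<N}g_{-nr}\bigcup_{0\le t<r}g_{-t}\Delta^{-1}[0,r]\subset \bigcup_{n<N}g_{-nr}\Delta^{-1}[0,2r].
\]
Since $g_{-nr}$ preserves $\mu_K$, we get
\[
\mu_K(\widetilde{\Delta}'_{\tau,s_0}(r))\le N\mu_K(\Delta^{-1}[0,2r])\ll_{\tau,s_0} r^{-1}\mu_K(\Delta^{-1}[0,2r]).
\]
By Theorem \ref{thm: measure estimates}, $\mu_K(\Delta^{-1}[0,2r])\asymp\mu_K(\Delta^{-1}[0,r])$, since both $r^5$ and $r^3\log(1/r)$ are doubling. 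This gives the upper bound.

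\emph{Lower bound.} We show that translates $g_{-t}\Delta^{-1}[0,r]$ are disjoint for $t$ in a suitable range, i.e. $g_t\Delta^{-1}[0,r]\cap\Delta^{-1}[0,r]=\emptyset$ for $Cr\le |t|\le \tau s_0$. Granting this, pick times $t_n=nCr$ with $t_n<\tau s_0$. There are $\gg_{\tau,s_0} r^{-1}$ of them. The sets $g_{-t_n}\Delta'_r$ are pairwise disjoint, each has measure $\ge c\,\mu_K(\Delta^{-1}[0,r])$, and all lie in the thickening, which gives the lower bound.

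\emph{Disjointness, the main obstacle.} Take $\Lambda$ with $\Lambda\in\Delta^{-1}[0,r]$ and $g_t\Lambda\in\Delta^{-1}[0,r]$. By Proposition \ref{prop:ulbd-1} (resp. Proposition \ref{prop:ulbd-3}) we can write $\Lambda=gh\mathcal O_K^2$ with $g\in\overline{\cK}_r$ (up to $w_0$-conjugation), so $|g_{11}|,|g_{22}|=1+O(r)$. The lattice contains vectors $gh\bm v$, $\bm v\in\cS_K$, of norm $\le r_K(1+O(r))$ whose first coordinates have modulus $\asymp 1$ (for instance $g h(1,0)^t$ in the Eisenstein case, or $gh_0(1,0)^t$ in the Gaussian case, where the first coordinate dominates). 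Applying $g_t$ with $t\ge Cr$ multiplies that coordinate by $e^t$, and for $C$ large it is then $>r_K$. Similarly, for $t\le -Cr$ a vector whose second coordinate has modulus $r_K(1+O(r))$ grows.

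That alone does not exclude $g_t\Lambda\in\Delta^{-1}[0,r]$ for moderate $t$, so we argue as follows. For $Cr\le |t|\le \delta$, Lemma \ref{lem:comarg} and the explicit description show that $g_t\Lambda$ must again have the form $g'h\mathcal O_K^2$ with $g'\in\overline{\cK}_r$. Comparing diagonal entries then forces $e^{t}=1+O(r)$, which is a contradiction.

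For $\delta\le|t|\le\tau s_0$, compactness applies. The sets $g_t\mathfrak C_K\cap\mathfrak C_K$ are empty except possibly at finitely many times (in the Gaussian case, multiples of the period $\tau_0=\log(2+\sqrt3)$ from Lemma \ref{rmk:cloges}). Away from those times, shrinking $r$ gives disjointness by continuity. Near a period $t\approx m\tau_0$ the translate returns to the critical locus, so there we instead only use times in a window $[0,\tau_0/2)$. This requires choosing $s_0$ small, or restricting the $t_n$ to one window and absorbing the constant factor into the $\asymp_{\tau,s_0}$ implied constant.
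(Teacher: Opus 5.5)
The overall structure matches the paper, but the disjointness step needs two repairs. Your upper bound is the paper's argument: cover the thickening by $\asymp\tau s_0/r$ translates of $\Delta^{-1}[0,2r]$ and use that $r^5$ and $r^3\log(1/r)$ are doubling. Your lower bound has the paper's shape once you adopt your own fallback. The paper proves $g_{-s}\Delta^{-1}[0,r]\cap\Delta^{-1}[0,r]=\emptyset$ only for $C_1r\le|s|\le\epsilon_0$. It then uses the $\lfloor\min\{\epsilon_0,\tau s_0\}/(C_1r)\rfloor\gg_{\tau,s_0}r^{-1}$ pairwise disjoint translates $g_{-kC_1r}\Delta'_r$.

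So discard the compactness discussion for $\delta\le|t|\le\tau s_0$ entirely. It is unnecessary, and the disjointness you assert on that range is false for large $\tau s_0$, not only near the Gaussian periods. For instance, in the Eisenstein case $g_{\frac12\log 3}\,u(\tfrac{\sqrt3}{3}i)\Z[j]^2=R_{\pi/2}\,u^-(\tfrac{\sqrt3}{3}i)\Z[j]^2$. The connecting matrix is $\left(\begin{smallmatrix}0&-1\\1&-(1+2j)\end{smallmatrix}\right)\in\SL_2(\Z[j])$. Both lattices lie in $\mathfrak{C}_K$, because $\tfrac{\sqrt3}{3}i\in\partial\cF$ and $\mathfrak{C}_K$ is closed.

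In the local range, your observation that some vector becomes \emph{longer} under $g_t$ is beside the point. Membership in $\Delta^{-1}[0,r]$ is destroyed only by a vector of norm $<r_Ke^{-r}$, and the paper uses exactly the mirror image of your observation. Take $g\in\overline{\cK}_r$ and $Cr\le t\le\epsilon_0$. The vector $g_tgh_0(0,1)^t$ has second coordinate of modulus $e^{-t}r_K|g_{22}+|\omega|e^{\pi i/6}g_{21}|\le e^{-t}r_K(1+O(r))$. Its first coordinate has modulus at most $e^{t}r_K(|\omega|+O(r))$, so its norm is $<r_Ke^{-r}$. For $-\epsilon_0\le t\le -Cr$ one uses $(1,0)^t$ instead. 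In the Eisenstein case one uses the columns of $g$, that is, $\min\{e^{t}|g_{11}|,e^{-t}|g_{22}|\}<e^{-r}$.

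Your alternative, comparing diagonal entries, can be completed, but as written it omits the essential step. The representation $\Lambda=gh\cO_K^2$ with $g\in\overline{\cK}_r$ is far from unique: $\overline{\cK}_r\supset\cR$, $g$ and $-g$ give the same lattice, and in the Eisenstein case $g_{12}$ is not even small. So you must first show, by discreteness, one of the following:
\begin{itemize}
\item Gaussian case: $g'^{-1}g_tg\in h_0\Gamma_Kh_0^{-1}$ actually lies in $\cR$.
\item Eisenstein case: $g'^{-1}g_tg\in\Gamma_K$ is upper triangular with unit diagonal, since its $(2,1)$-entry is an element of $\Z[j]$ of modulus $<1$.
\end{itemize}
You must also handle the cross case, in which $g_t\Lambda$ lies in the $w_0$-conjugated piece. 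Only then does $e^t|g_{11}|=|g'_{11}|$ (or $=|g'_{22}|$) follow. The direct short-vector argument avoids all of this.
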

\begin{proof}
The proof follows the similar line as that in \cite[Section 5]{KleinbockStrombergssonYu2022}. {First note that for any $0<r<r_0$,
$$
\widetilde{\Delta}'_{\tau,s_0}(r)=\bigcup_{0\leq s<\tau s_0}g_{-s}\Delta'_r=\widetilde{\Delta}'_{1,\tau s_0}(r).
$$
Thus we may assume $\tau=1$ and we denote $\widetilde{\Delta}'_{1, s_0}(r)$ by $\widetilde{\Delta}'(r)$ for simplicity.}
We first prove the desired upper bound. By definition of the function $\Delta$, we have 
\[ g_{-s}\Delta^{-1}[0,r]\subset \Delta^{-1}[0,2r],\quad \forall\, r>0, s\in [0, r]. \]
Let $q=\lceil 1/r \rceil$. Then
\begin{align*}
   \widetilde{\Delta}'(r)\subset \bigcup_{0\leq s<1}g_{- s}\Delta'_r=\bigcup_{k=0}^{q-1}g_{-k/q} \left(\bigcup_{0\leq s< 1/q} \, g_{-s}\Delta^{-1}[0,r]\right)\subset \bigcup_{k=0}^{q-1}g_{-k/q} \Delta^{-1}[0,2r].
\end{align*}
This implies that 
\[ \mu_K(\widetilde{\Delta}'(r))\ll_{ s_0} r^{-1}\mu_K(\Delta^{-1}[0,2r])\asymp r^{-1}\mu_K(\Delta^{-1}[0,r]), \]
which is the desired upper bound. Here the last estimate follows from Theorem \ref{thm: measure estimates}. 

\medskip

To prove the desired lower bound, note that it suffices to show that {for all $0<r\ll 1$ sufficiently small,}
\begin{align}\label{equ:disj1}
g_{-s}\Delta^{-1}[0,r]\cap \Delta^{-1}[0,r]=\emptyset,\qquad \forall\, C_1r\leq |s|\leq \e_0.
\end{align}
Here $\epsilon_0$ is a fixed sufficiently small constant such that Lemma \ref{lem:moremandes-1} and Lemma \ref{lem:moremandes-3} hold, and $C_1>0$ is an absolute large constant to be specified later. Indeed, assuming this we then prove the desired lower bound. Note that 
\[g_{-s}{\Delta}'(r)\cap {\Delta}'(r) \subset g_{-s}\Delta^{-1}[0,r]\cap \Delta^{-1}[0,r], \quad \forall\, 0<r<r_0,\, s\in \R. \]
Let $\epsilon_1=\min\{\epsilon_0,s_0\}$, assuming \eqref{equ:disj1} we have for any $0<r<r_0'$,
\begin{align*}
    \mu_K(\widetilde{\Delta}'(r))\geq \mu_K\left( \bigcup_{0\leq k< \lfloor \tfrac{\epsilon_1}{C_1r} \rfloor} g_{-kC_1 r} {\Delta}'(r) \right) = \sum_{0\leq k< \lfloor \tfrac{\epsilon_1}{C_1r} \rfloor} \mu_K({\Delta}'(r))\gg_{s_0, c} r^{-1}\mu_K(\Delta^{-1}[0,r]),
\end{align*}
which is the desired lower bound.

Now we prove the disjointness statement \eqref{equ:disj1} for some absolute constant $C_1$. When $K=\Q(\sqrt{-1})$, we take $C_1=53$. In view of Proposition \ref{prop:ulbd-1} we only need to show that {for all $0<r\ll 1$ sufficiently small}
\[ g_{s}\Lambda \notin \Delta^{-1}[0,r], \,\, \forall \, \Lambda\in \overline{K}^{+}_r\cup \overline{K}^{-}_r, \quad \forall\, 53 r\leq |s|\leq \e_0.\]
Let us first assume that $\Lambda\in \overline{K}^{+}_r$, then $\Lambda=gh_0\Z[i]^2$ where $g\in \overline{\cK}_r$ (see (\ref{eq: bar cKr})). If $s\in [53r,\epsilon_0]$, we have for any $k\in \{0,1,2\}$ that
\[ e^{-s}\left|g_{22}+|\omega|e^{(-\frac{1}{2}+\frac{2k}{3})\pi i}g_{21}\right|\leq e^{-s}(1+2r+24r)<e^{-r}.  \]
 If $s\in [-\epsilon_0,-53r]$, then for any $k\in \{0,1,2\}$ we have
\[e^{s}\left|g_{11}+|\omega|e^{(-\frac{1}{2}+\frac{2k}{3})\pi i}g_{12}\right|< e^{-r}. \]
On the other hand, as $|s|\leq \epsilon_0$, we have $g_s g\in \B_{G}(2\epsilon_0)\cR$. Hence, by Lemma \ref{lem:moremandes-1} and Remark \ref{rmk:expinequ}, $g_{s} \Lambda\notin \Delta^{-1}[0,r]$. If $\Lambda\in \overline{K}_r^-$, then $\omega_0^{-1}\Lambda\in \overline{K}_r^{+}$. By the above argument,  $g_{s}\omega_0^{-1}\Lambda\notin \Delta^{-1}[0,r]$ for any $53r\leq  |s|\leq \epsilon_0$. Hence, we have
\[ g_{-s}\Lambda=\omega_0 g_{s}\omega_0  \Lambda\notin \omega_0\Delta^{-1}[0,r]=\Delta^{-1}[0,r].\]
This concludes the proof of the disjointness statement \eqref{equ:disj1} when $K=\Q(\sqrt{-1})$.

\medskip

When $K=\Q(\sqrt{-3})$, we take $C_1=10^5$. In view of Proposition \ref{prop:ulbd-3}, we only need to show that {for all $0<r\ll 1$ sufficiently small,}
\[g_{s}\Lambda \notin \Delta^{-1}[0,r], \,\, \forall \, \Lambda\in \overline{K}^{+}_r\cup \overline{K}^{-}_r, \quad \forall\,  10^5 r\leq |s|\leq \epsilon_0.\]
We first assume that $\Lambda\in \overline{K}_r^+$. Then $\Lambda=g\Z[j]^2$ for $g\in \overline{\cK}_r$. If $ 10^5 r\leq |s|\leq \epsilon_0$, we have 
\[ \min\{e^s |g_{11}|,e^{-s}|g_{22}|\}<(1+10^4 r)e^{-|s|}< e^{-r}. \]
This implies $g_s\Lambda\notin \Delta^{-1}[0,r]$ in view of (\ref{equ:nebexpdes-3}) and (\ref{equ:ineq1-3}). Now if $\Lambda\in \overline{K}_r^-$, then $\omega_0^{-1}\Lambda\in \overline{K}_r^+$. By the above argument, we have for all $ 10^5 r\leq |s|\leq \epsilon_0$, $g_s\omega_0^{-1}\Lambda\notin \Delta^{-1}[0,r]$. Hence, 
\[ g_{-s}\Lambda=\omega_0 g_s\omega_0^{-1}\Lambda\notin \omega_0 \Delta^{-1}[0,r]=\Delta^{-1}[0,r], \]
whence the disjointness condition and also the conclusion of the proof.
\end{proof}

\bigskip

\section{Effective single and double equidistribution}
\label{sec: effective equidistribution}
\medskip

Let $X=G/\G_K=\SL_2(\C)/\SL_2(\cO_K)$ be as before. Let $\mu_K$ be the probability $G$-invariant measure on $X$. Recall that for $s\in \R$ and $z\in \C$, we have defined 
\begin{align}\label{equ:gpeleno}
g_s=\begin{pmatrix}
    e^s & 0\\
    0 &  e^{-s}
\end{pmatrix}, \quad u(z)=\begin{pmatrix}
    1 & z\\
    0 & 1
\end{pmatrix}\quad \text{and}\quad \Lambda_z=u(z)\cO_K^2. 
\end{align}
Set 
$$
\cY:=\{\Lambda_z: z\in \C\}\subset X.
$$
Note that for any $z,z'\in \C$, $\Lambda_{z}=\Lambda_{z'}$ if and only if $z-z'\in \cO_K$. Hence  $\cY$ is naturally identified with the torus $\C/\cO_K$. We denote by $\dd z$ the probability measure on $\cY$  induced, under this identification, by the normalized Lebesgue measure, $\Leb$, on $\C/\cO_K$. 

\medskip
The key dynamical ingredient for our proof of the zero-one laws is the following effective single and double equidistribution of the torus $\cY$ under the expanding translates by the flow $\{g_s\}$ as $s\to\infty$. To state these results, let us first introduce some notation. 

 Let $C_c^{\infty}(X)$ denote the space of smooth compactly supported functions on $X$. For any $m\in \N$ and for any $f\in C_c^{\infty}(X)$ define
\begin{align}\label{eq: Cm norm}
 \norm{f}_{C^m}:=\sum_{\deg (\mathcal{D})\le m} \sup_{x\in X}|\mathcal{D}f(x)|,
\end{align}
where the sum is over all monomials in a fixed basis of the Lie algebra of $G$, viewed as a real Lie algebra. {Recall the \textit{injectivity radius} of a point $x\in X$ is given by
\[ \inj(x):=\sup \{r>0: \text{the map } \B_{G}(r)\to X \text{ defined by } g\mapsto gx \text{ is injective}  \}, \]
where $\B_{G}(r)\subset G$ is the ball of radius $r$ centered at $\Id$.} {Using the natural identification between $\C$ and $\R^2$, for any $\delta>0$ we use the notation $[-\delta,\delta]^2$ for the set $\{z\in \C: \max\{|\Re(z)|,|\Im(z)|\}\leq \delta\}$.

We first state the following effective equidistribution theorem due to Edwards \cite[Theorem 1']{Edwards17}. We tailor it here for our purpose:

\begin{Thm}\label{thm: single equidistribution}
Keep the assumptions as above. There exists a constant $c_1>0$ depending only on $K$ such that the following holds: Let $I$ be a parallelogram fundamental domain for $\C/\cO_K$, or $I={[-\delta,\delta]^2}$ for some $\delta\in (0,1]$. Then for any $x\in X$, 
$f\in C_c^{\infty}(X)$, $s>0$,
\begin{align}\label{equ:singleequ}
\frac{1}{\Leb(I)}\int_I f(g_s u(z)x) \dd z= \mu_K(f)  +O(e^{-c_1s} \inj(x)^{-1} E_I \norm{f}_{C^7}), 
\end{align}
where the constant in $O(\cdot)$ is absolute, $E_I>0$ is a constant for $I=\C/\cO_K$ and $E_I=\delta^{-1}$ for $I={[-\delta,\delta]^2}$.

\end{Thm}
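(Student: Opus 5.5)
The plan is to deduce Theorem~\ref{thm: single equidistribution} from Edwards' effective equidistribution theorem for expanding horospherical translates \cite[Theorem 1']{Edwards17}. The required input is an estimate of the form
\[
\int_{\C} \rho(z) f(g_s u(z)x)\,\dd z=\Big(\int \rho\Big)\mu_K(f)+O\big(e^{-c_1 s}\inj(x)^{-1}\mathcal{S}(\rho)\norm{f}_{C^7}\big),
\]
where $\rho\in C_c^\infty(\C)$ has support small enough that $z\mapsto u(z)x$ is injective on it, and $\mathcal{S}(\rho)$ is a Sobolev norm of $\rho$. The spectral gap that gives the rate $c_1$ comes from the Bianchi group $\Gamma_K$ being an arithmetic lattice, so $c_1$ depends only on $K$. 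I take this estimate as given. The work is then to pass from smooth densities $\rho$ to the sharp indicator $\mathbf 1_I$ and to record how the error depends on $I$.

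\textbf{Case $I=[-\delta,\delta]^2$.} Fix a parameter $\eta\in(0,\delta/2)$. Choose smooth cutoffs $\rho_\pm$ with $\rho_-\le \mathbf 1_I\le\rho_+$, supported in $[-\delta\mp\eta,\delta\pm\eta]^2$, satisfying $\int(\rho_+-\rho_-)\ll\delta\eta$ and $\mathcal{S}(\rho_\pm)\ll \eta^{-k}\delta$. We may assume $f\ge0$ by writing $f=f^+-f^-$ after adding a constant. Sandwiching, applying the estimate to $\rho_\pm$, and dividing by $\Leb(I)\asymp\delta^2$ gives an error of
\[
\frac{\eta}{\delta}\norm{f}_\infty+e^{-c_1 s}\inj(x)^{-1}\eta^{-k}\delta^{-1}\norm{f}_{C^7}.
\]
Choosing $\eta=e^{-c s}$ with $c$ small and then shrinking $c_1$ makes the total error $O(e^{-c_1 s}\delta^{-1}\inj(x)^{-1}\norm{f}_{C^7})$. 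This is where $E_I=\delta^{-1}$ comes from. If $\delta$ exceeds the injectivity scale, I first cover $I$ by $O(\delta^2)$ translates of small squares.

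\textbf{Case $I$ a parallelogram fundamental domain.} Here $\int_I f(g_su(z)x)\,\dd z$ is independent of the choice of fundamental domain whenever $x$ is $u(\cO_K)$-invariant. In general I simply tile $I$ by a bounded number of small squares (the number depends on $K$) and apply the previous case to each square. This gives a constant $E_I$.

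\textbf{Main obstacle.} The delicate point is matching hypotheses with \cite{Edwards17}. Edwards' statement is phrased for the $\SL_2(\C)$ horospherical subgroup with a specific Sobolev norm and an explicit dependence on the basepoint. I need to check three things: that this Sobolev norm is dominated by $\norm{\cdot}_{C^7}$ (this fixes the order $7$); that the basepoint dependence is controlled by $\inj(x)^{-1}$, using the fact that the height function is comparable to the inverse of the injectivity radius; and that the constants in $O(\cdot)$ can be made absolute by absorbing the $K$-dependence into $c_1$. If the basepoint dependence in \cite{Edwards17} is polynomial in the height rather than linear, I would restrict to $s$ large relative to $\log\inj(x)^{-1}$ and absorb the excess into the exponent. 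Otherwise the argument is the standard smoothing.
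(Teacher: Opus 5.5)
Your reduction is sound and rests on the same input as the paper, but the paper's proof is a one-line citation. \cite[Theorem 1']{Edwards17} is already formulated for integrals over regions $I\subset\C$ satisfying certain hypotheses. The paper only checks that squares and fundamental parallelograms meet those hypotheses, and that Edwards' height function is a negative power of $\inj(x)$; the excess power is absorbed by lowering $c_1$. You instead start from a smooth-density estimate and do the smoothing yourself. This makes the origin of $E_I$ explicit. It also shows that the exact powers of $\delta^{-1}$ and $\inj(x)^{-1}$ are immaterial. Whenever $e^{-c_1 s}\inj(x)^{-1}E_I\ge 1$, the bound holds trivially, since the left side differs from $\mu_K(f)$ by at most $2\norm{f}_\infty\le 2\norm{f}_{C^7}$. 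So any polynomial dependence can be traded for a smaller $c_1$. This covers the regime $\eta\ge\delta/2$ that your argument leaves untreated, and a Sobolev norm of $\rho$ of $L^\infty$ type, which would give $\delta^{-2}$. You should state this trivial regime explicitly, since it is what justifies ``restricting to $s$ large.''

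Three sub-steps need repair.

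\begin{enumerate}
\item A non-rectangular fundamental parallelogram, e.g.\ the $60^\circ$ one spanned by $1$ and $j$, cannot be tiled by finitely many squares. Instead, smooth $\mathbf{1}_I$ directly: $\partial I$ has bounded length, so $\norm{\rho-\mathbf{1}_I}_{L^1}\ll\eta$. Alternatively, cover the interior by squares of side $e^{-cs}$ and discard a boundary layer of measure $O(e^{-cs})$.

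\item The scale on which $z\mapsto u(z)x$ is guaranteed injective is only $\gg\inj(x)$, not a fixed constant. For example, for $x=g_{-t}\cO_K^2$ the orbit $u(z)x$ has period lattice $e^{-2t}\cO_K$. So the covering needs $O(\delta^2\inj(x)^{-2})$ squares, based at points $u(z_m)x$ with $\inj(u(z_m)x)\gg\inj(x)$. The resulting extra factors of $\inj(x)^{-1}$ are absorbed by the same trivial-regime argument.

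\item The functions $f^{\pm}$ are not smooth, but positivity is unnecessary, since $\bigl|\int(\mathbf{1}_I-\rho)f(g_su(z)x)\,\dd z\bigr|\le\norm{f}_\infty\norm{\mathbf{1}_I-\rho}_{L^1}$.
\end{enumerate}
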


\begin{proof}
It is directly to verify that the set $I$ satisfies all the assumptions in \cite[Theorem 1']{Edwards17}. Then the theorem here follows directly from that theorem. {Note that in \cite[Theorem 1']{Edwards17}, the error term involves certain height function of $x$. This height function of $x$ is some negative power of $\inj(x)$. In this theorem, for simplicity we adjust this power to be $-1$ by possibly lowering the exponent $c_1$.} 
\end{proof}

We also need the following effective double equidistribution theorem, whose proof follows the same line as that in \cite[Theorem 1.2]{KleinbockShiWeiss2017}. For the sake of completeness, we include a self-contained proof. 

\begin{Thm}\label{thm:effdoubequi}
Keep the assumptions as in Theorem \ref{thm: single equidistribution}. Let $c=c_1/8$ with $c_1>0$ as in Theorem \ref{thm: single equidistribution}.  Then the following holds: For any $s_1\geq s_2>0$ and for any $f_1,f_2\in C_c^{\infty}(X)$, we have
\begin{align}\label{equ:doubleequ}
     \int_{\C/\cO_K} f_1(g_{s_1}\Lambda_z) f_2(g_{s_2}\Lambda_z)\dd z= \mu_{K}(f_1) \mu_{K}(f_2)+O(e^{-c s_2}\|f_2\|_{C^7}|\mu_{K}(f_1)|)+O(e^{-c(s_1-s_2)} \|f_1\|_{C^7}\|f_2\|_{C^7} ).
\end{align}

\end{Thm}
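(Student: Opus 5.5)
The plan follows the Kleinbock--Shi--Weiss strategy: use the expansion by $g_{s_1-s_2}$ to reduce the double integral to single equidistribution on many small pieces. Set $t:=s_1-s_2\geq 0$ and fix a small parameter $\delta=e^{-\kappa t}$, with $\kappa>0$ chosen later. We may assume $t$ is larger than an absolute constant; otherwise the second error term is $\gg \|f_1\|_{C^7}\|f_2\|_{C^7}$ and the estimate is trivial. Tile a parallelogram fundamental domain of $\C/\cO_K$ by squares $z_0+[-\delta,\delta]^2$, up to a boundary layer of measure $O(\delta)$. On each square, write $z=z_0+w$. Then
\[
g_{s_2}\Lambda_z=g_{s_2}u(w)g_{-s_2}\,g_{s_2}\Lambda_{z_0}=u(e^{2s_2}w)\,g_{s_2}\Lambda_{z_0}.
\]
This is still expanding, so $f_2(g_{s_2}\Lambda_z)$ is not locally constant in $w$ at scale $\delta$. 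To fix this, rescale the partition. Use squares of side $\delta e^{-2s_2}$ in $z$, i.e. $w\in e^{-2s_2}[-\delta,\delta]^2$. On such a square, $g_{s_2}\Lambda_z=u(w')x_0$ with $|w'|\leq \delta$ and $x_0:=g_{s_2}\Lambda_{z_0}$. Hence
\[
f_2(g_{s_2}\Lambda_z)=f_2(x_0)+O(\delta\|f_2\|_{C^1}),
\]
and
\[
g_{s_1}\Lambda_z=g_t u(w')x_0, \qquad w'\in[-\delta,\delta]^2 .
\]

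On each small square, apply Theorem \ref{thm: single equidistribution} with $I=[-\delta,\delta]^2$, base point $x_0$ and time $t$:
\[
\frac{1}{\Leb(I)}\int_I f_1(g_t u(w')x_0)\,\dd w'=\mu_K(f_1)+O\bigl(e^{-c_1t}\inj(x_0)^{-1}\delta^{-1}\|f_1\|_{C^7}\bigr).
\]
The main obstacle is the factor $\inj(x_0)^{-1}$, since $x_0=g_{s_2}\Lambda_{z_0}$ may lie deep in the cusp. I plan to handle this with a cutoff. The points $x_0$ with $\inj(x_0)<\eta$ contribute at most $\|f_1\|_\infty\|f_2\|_\infty$ times the measure of the corresponding set of $z_0$. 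That set lies in the region where $f_2$ is supported only if $\eta$ is below the injectivity radius on $\mathrm{supp} f_2$; but the supports are arbitrary, so we need a genuine estimate instead. I would first try to note that when $f_2(x_0)\neq 0$ up to an $O(\delta)$ error, $x_0$ lies in a $\delta$-neighbourhood of $\mathrm{supp}f_2$. The bound $\inj(x_0)^{-1}$ is then controlled by a Sobolev-type quantity of $f_2$: since $\|f_2\|_{C^7}$ dominates $\sup|f_2|$, points with small injectivity radius have tiny values of $f_2$ via a height-function argument. If that fails, the fallback is to use that the proportion of $z_0$ with $\inj(g_{s_2}\Lambda_{z_0})<\eta$ is $\ll \eta^{a}$ for some $a>0$, uniformly in $s_2$. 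This follows from single equidistribution (Theorem \ref{thm: single equidistribution} applied with a smooth cusp cutoff at $x=\Lambda_0$, for which $\inj$ is fixed) together with $\mu_K(\inj<\eta)\ll\eta^{a}$.

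Summing over squares gives
\[
\int f_1(g_{s_1}\Lambda_z)f_2(g_{s_2}\Lambda_z)\,\dd z=\mu_K(f_1)\int f_2(g_{s_2}\Lambda_z)\,\dd z+E,
\]
where
\[
E\ll\bigl(\delta+e^{-c_1t}\delta^{-1}\eta^{-1}+\eta^{a}\bigr)\|f_1\|_{C^7}\|f_2\|_{C^7}.
\]
The factor $\delta\|f_2\|_{C^1}$ appears multiplied by $\int|f_1|$; I will bound this crudely by $\|f_1\|_\infty$. Applying Theorem \ref{thm: single equidistribution} once more, now with $I=\C/\cO_K$ and $x=\Lambda_0$, gives
\[
\int f_2(g_{s_2}\Lambda_z)\,\dd z=\mu_K(f_2)+O\bigl(e^{-c_1s_2}\|f_2\|_{C^7}\bigr),
\]
which produces the term $O(e^{-cs_2}\|f_2\|_{C^7}|\mu_K(f_1)|)$. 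Finally, choose $\delta=\eta=e^{-c_1t/4}$. All errors are then $\ll e^{-c_1t/8}$ after absorbing $a$, possibly by replacing $c_1$ with a smaller constant as the paper permits. This yields the stated exponent $c=c_1/8$. The boundary squares contribute $O(\delta)$ and are absorbed into the same error.
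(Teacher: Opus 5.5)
Your proposal is correct and is essentially the paper's Kleinbock--Shi--Weiss argument. You localize to $\delta$-boxes in the expanded coordinate and apply Theorem \ref{thm: single equidistribution} there with $E_I=\delta^{-1}$. You discard base points of small injectivity radius by the non-divergence bound: your fallback is exactly Proposition \ref{prop: nondivergence}, with $a=1/2$, whereas the height-function ``first try'' does not work and is not needed. You then finish with single equidistribution for $f_2$. The only difference is cosmetic: the paper averages by a convolution in $\xi$ with base point at the intermediate time $(s_1+s_2)/2$ instead of tiling at time $s_2$, so its $f_2$-error is $e^{-(s_1-s_2)}$ rather than your $\delta$, and both choices give $c=c_1/8$.
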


As in \cite{KleinbockShiWeiss2017}, we will use the effective single equidistribution result (Theorem \ref{thm: single equidistribution}) to prove the above double equidistribution theorem. A key ingredient for this argument is a  \textit{quantitative non-divergence} result in this setting. Such non-divergence results were obtained by Dani, Margulis and Kleinbock in various settings \cite{DM91,KM98}, the particular version we need here is well-known to experts (see e.g. \cite[Proposition 3.1]{LM23}).

\begin{Prop}\label{prop: nondivergence}
   There exists $C>1$ depending only on $X$ such that given any $0<\epsilon<1$, we have for any $s>0$,
    \[ \Leb\left(\{z\in \C/\cO_K: \inj(g_s\Lambda_z)\leq \epsilon \}\right)\leq C\epsilon^{1/2} \Leb(\C/\cO_K).\]
\end{Prop}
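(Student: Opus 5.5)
We deduce Proposition \ref{prop: nondivergence} from the quantitative non-divergence theorem of Kleinbock--Margulis \cite{KM98} in the form for $\SL_2$ over an imaginary quadratic field, together with Mahler's criterion relating injectivity radius to short vectors. We work throughout with $\Lambda\in X$ viewed as a lattice in $\R^4$ via $\C^2\cong\R^4$, of covolume $|D_K|/4$ by \eqref{equ:covol}.

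\emph{Step 1: reduce to short vectors.} We first show there is a constant $c_0>0$, depending only on $K$, such that $\inj(\Lambda)\leq\epsilon$ implies $\sys(\Lambda)\leq c_0\epsilon^{1/2}$ for all small $\epsilon$. This is the quantitative form of Mahler's criterion in \cite[Theorem~1.1]{KleinbockShiTomanov17}: on a fixed compact set $\{\sys\geq\eta\}$ the injectivity radius is bounded below. More precisely, if $\inj(g\Gamma_K)$ is small then some nontrivial $\gamma\in g\Gamma_K g^{-1}$ is close to $\Id$; a standard reduction-theory (Siegel set) argument for $\SL_2(\C)/\SL_2(\cO_K)$ then shows $\inj(x)\gg\sys(x)^2$. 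Since $\Gamma_K$ has finitely many cusps and torsion only of bounded order, this reduction is routine, and so
\[
\{z:\inj(g_s\Lambda_z)\leq\epsilon\}\subset\{z:\sys(g_s\Lambda_z)\leq c_0\epsilon^{1/2}\}.
\]
Any polynomial relation $\inj\gg\sys^{A}$ would do; it only changes the exponent. The exponent $1/2$ in the claim suggests aiming for the bound $\Leb\{\sys\leq\rho\}\ll\rho$, combined with $\inj\asymp\sys^2$. Alternatively, using $\inj\asymp\sys^2$ and a $\rho^2$ measure bound also works, so there is some slack.

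\emph{Step 2: $(C,\alpha)$-good property.} For a primitive $\cO_K$-submodule $\Delta\subset\cO_K^2$ of rank $1$ (or $2$), consider the covolume function $z\mapsto\|g_su(z)\Delta\|$ on a fundamental parallelogram $\cF$. For a rank-one module generated by $(p,q)$, the covolume is comparable to $\max\{e^s|p+qz|,e^{-s}|q|\}^2$. Its square is a real polynomial of degree $\leq2$ in $(\Re z,\Im z)$, hence $(C,\alpha)$-good on balls in $\R^2$ with constants depending only on the degree. The rank-two module has covolume $1$ (up to the constant $|D_K|/4$). We also need $\sup_{z\in B}$ of each such function to be $\geq\rho_0$ on each ball $B$ of a fixed finite cover of $\cF$. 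If $q\neq0$, the sup of $e^s|p+qz|$ over a ball of fixed radius $\delta$ is at least $e^s|q|\delta\gg1$. If $q=0$, then $|p|e^s\geq1$. So this holds uniformly in $s>0$.

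\emph{Step 3: apply \cite{KM98}.} The Kleinbock--Margulis theorem, in the version over $\C$ where flags are taken over $\cO_K$ (as in \cite[Proposition 3.1]{LM23}), gives
\[
\Leb\{z\in B:\sys(g_s\Lambda_z)\leq\rho\}\ll(\rho/\rho_0)^{\alpha}\Leb(B).
\]
Summing over the cover and combining with Step 1 then gives the claim with exponent $1/2$, after adjusting $C$. Exponents from the direct computation are fine: for rank one the set where $|p+qz|\leq\rho e^{-s}$ and $|q|\leq\rho e^s$ has measure about $\rho^2e^{-2s}|q|^{-2}$ per $(p,q)$. Summing over the $\asymp|q|^2$ choices of $p$ mod $q$ and then over $|q|\leq\rho e^s$ gives $\ll\rho^4$. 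Hence even a naive union bound works for the rank-one case, and the rank-two flag contributes nothing. In fact, simply doing this elementary counting directly may be cleanest: it avoids citing \cite{KM98} and gives $\Leb\{\sys\leq\rho\}\ll\rho^4$, which is more than enough.

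The main obstacle is Step 1, the precise comparison between $\inj$ and $\sys$ on $X$ with uniform constants. For this we would rely on reduction theory of Bianchi groups: each point lies, up to $\Gamma_K$, in a Siegel set at one of the finitely many cusps, where both quantities are explicitly comparable to a power of the height.
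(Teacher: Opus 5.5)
Your final argument (Step 1 plus the elementary count) is correct. It takes a genuinely different route from the paper, which simply invokes the general quantitative non-divergence result \cite[Proposition 3.1]{LM23}, or alternatively feeds a smoothed indicator of $\{\inj>\epsilon\}$ into Theorem \ref{thm: single equidistribution}.

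Your count is right. For $\rho<1$ and $s>0$, a vector $(e^s(p+qz),e^{-s}q)$ of norm $\le\rho$ forces $q\neq0$, $|q|\le\rho e^s$ and $|z+p/q|\le\rho e^{-s}/|q|$. There are $|q|^2$ classes of $p$ modulo $q\cO_K$, so each $q$ contributes $\ll\rho^2e^{-2s}$. Summing over the $\ll\rho^2e^{2s}$ admissible $q$ gives $\Leb\{z:\sys(g_s\Lambda_z)\le\rho\}\ll\rho^4$ uniformly in $s>0$. Combined with $\inj\gg\min\{1,\sys^2\}$, this yields the proposition with $\epsilon^2$ in place of $\epsilon^{1/2}$.

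What your route buys is a self-contained proof with a sharper exponent, valid for every $s>0$. By contrast, a direct deduction from Theorem \ref{thm: single equidistribution} only controls $s$ large compared with $\log(1/\epsilon)$, since a smoothed indicator of the thick part has $C^7$-norm growing like a power of $\epsilon^{-1}$. The cited result buys generality (arbitrary base points, other quotients) at the price of being a black box.

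You should drop the Kleinbock--Margulis detour of Steps 2--3. It is unnecessary, and as written its exponent claim is not justified. With the generic $(C,\alpha)$ exponent for quadratic polynomials in two real variables you would only get $\Leb\{\sys\le\rho\}\ll\rho^{1/2}$, i.e.\ $\epsilon^{1/4}$ after Step 1. Reaching $\epsilon^{1/2}$ that way would require exploiting the special structure of these functions.

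Step 1, which you single out as the main obstacle, needs no reduction theory.
\begin{itemize}
\item If $\inj(h\Gamma_K)\le\epsilon$ with $\epsilon$ small, there is $\gamma\in h\Gamma_Kh^{-1}\smallsetminus\{\Id\}$ with $\|\gamma-\Id\|\ll\epsilon$, and $\gamma$ preserves $\Lambda=h\cO_K^2$.
\item For a shortest vector $v_1$, the vector $\gamma v_1-v_1\in\Lambda$ has norm $\ll\epsilon\,\sys(\Lambda)<\sys(\Lambda)$, so $\gamma v_1=v_1$. Since $\det\gamma=1$ and $\gamma\neq\Id$, $\gamma-\Id$ is a nonzero nilpotent with kernel and image both equal to $\C v_1$.
\item Apply Minkowski's second theorem to $\Lambda\subset\R^4$. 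The $\cO_K$-module structure forces $\lambda_2\asymp\lambda_1$ and $\lambda_4\asymp\lambda_3$, so $\lambda_1\lambda_3\asymp1$. This gives $u\in\Lambda\smallsetminus\C v_1$ with $\|u\|\ll\sys(\Lambda)^{-1}$.
\item Then $\gamma u-u$ is a nonzero vector of $\Lambda\cap\C v_1$; it is nonzero because $u\notin\ker(\gamma-\Id)$. Hence $\sys(\Lambda)\le\|\gamma u-u\|\ll\epsilon\,\sys(\Lambda)^{-1}$, i.e.\ $\sys(\Lambda)\ll\epsilon^{1/2}$.
\end{itemize}
Values of $\epsilon$ bounded away from $0$ are handled by enlarging $C$.
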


\begin{proof}
This result is well known. Indeed, one can deduce the proposition directly from a more general result \cite[Proposition 3.1]{LM23}. Alternatively, for any $\epsilon>0$, one can apply Theorem \ref{thm: single equidistribution} to a smooth approximation of the characteristic function of $\{x\in X: \inj(x)> \epsilon\}$ to conclude the proof.
\end{proof}

With Proposition \ref{prop: nondivergence} and Theorem \ref{thm: single equidistribution}, we give the proof of Theorem \ref{thm:effdoubequi}.
\begin{proof}[Proof of Theorem \ref{thm:effdoubequi}]
Let $I$ be a parallelogram fundamental domain for $\C/\cO_K$ and recall that we have normalized the Lebesgue measure $\Leb$ on $\C$ such that $\Leb(I)=1$. Let  $\varphi=\chi_I$. Let $\delta=e^{-c_1(s_1-s_2)/8}$, where $c_1>0$ is the constant in Theorem \ref{thm: single equidistribution}. 
Let 
$$
h=\frac{\chi_{[-\delta,\delta]^2}}{\Leb([-\delta,\delta]^2)}
$$ and  $w=\frac{s_1+s_2}{2}$. By change of variables $z\mapsto z+e^{-2w}\xi$, we have
\begin{align*}
    \int_I f_1(g_{s_1}\Lambda_z)f_2(g_{s_2}\Lambda_z)\dd z&=\int \varphi(z) f_1(g_{s_1}\Lambda_z)f_2(g_{s_2}\Lambda_z)\dd z\int h(\xi) \dd \xi\\
    &=\int \int h(\xi) f_1(g_{w-s_2}u(\xi)g_w \Lambda_z)\varphi(z+e^{-2w}\xi)f_2(u(e^{s_2-s_1}\xi)g_{s_2}\Lambda_z)\dd \xi \dd z.
\end{align*}
Note that for any $z\in I$ and $\xi\in \supp h$, we have
\begin{align}\label{eq: error by f2}
    |f_2(u(e^{s_2-s_1}\xi)g_{s_2}\Lambda_z)-f_2(g_{s_2}\Lambda_z)|\leq e^{-(s_1-s_2)} \|f_2\|_{C^1}.
\end{align}
{As $\Leb(I\Delta (I+e^{-2w}\xi))\ll_{I} e^{-2w}$ for any $\xi\in {[-1,1]^2}$, (\ref{eq: error by f2}) implies }
\begin{align}\label{eq:double equid 1}
  \int_I f_1(g_{s_1}\Lambda_z)f_2(g_{s_2}\Lambda_z)\dd z=\int \Psi(z)\varphi(z)f_2(g_{s_2}\Lambda_z) \dd z+O(e^{-(s_1-s_2)}\norm{f_1}_{\infty}\|f_2\|_{C^1}),
\end{align}
where
\[ \Psi(z):= \int h(\xi) f_1(g_{w-s_2}u(\xi)g_w \Lambda_z) \dd \xi. \]

Let $\epsilon=e^{-c_1(s_1-s_2)/4}$, where $c_1>0$ is the constant in Theorem \ref{thm: single equidistribution}. Consider the set 
\[I_{\epsilon}:=\{z\in I: \inj(g_w \Lambda_z)\leq \epsilon\}.\]
Then by Proposition \ref{prop: nondivergence} and the assumption of the theorem, we have $|I_\epsilon|\ll \epsilon^{1/2}{=e^{-c_1(s_1-s_2)/8}}$. For any $z\in I\smallsetminus I_{\epsilon}$, applying Theorem \ref{thm: single equidistribution} and by the choice of $\delta$, 
we obtain
\begin{align*}
\Psi(z)=\mu_K(f_1)+O(e^{-c_1(s_1-s_2)/8}\|f_1\|_{C^7}).
\end{align*}
Therefore,
\begin{align}\label{eq: double equid 2}
    \int \Psi(z)\varphi(z)f_2(g_{s_2}\Lambda_z) \dd z&=\mu_K(f_1) \int \varphi(z)f_2(g_{s_2}\Lambda_z) \dd z +O(e^{-c_1(s_1-s_2)/8} \norm{f_1}_{C^7}\norm{f_2}_{\infty}).
\end{align}
Finally, we apply Theorem \ref{thm: single equidistribution} again to obtain
\begin{align}\label{eq: double equid 3}
\int \varphi(z)f_2(g_{s_2}\Lambda_z) \dd z= \mu_K(f_2)+ O(e^{-c_1s_2}\norm{f_2}_{C^7}).
\end{align}
Take $c=c_1/8$, the theorem follows by combining (\ref{eq:double equid 1}), (\ref{eq: double equid 2}), (\ref{eq: double equid 3}), and noting that for any $f\in C_c^{\infty}(X)$, 
we have
\[ \max\{|m_X(f)|,\norm{f}_{\infty}\}\leq \|f\|_{C^7}.\qedhere \]
\end{proof}

\begin{remark}\label{rmk:uniformc}
Since $c=c_1/8<c_1$, clearly \eqref{equ:singleequ} still holds replacing $c_1$ by $c$.
\end{remark}

\bigskip

\section{Proof of the zero-one laws}
\label{sec: proof of zero-one laws}
\subsection{Reduction to dynamics}

In this section we give the proof of the zero-one laws in Theorem \ref{thm: zero-one laws}. 
Recall for any $s\in \R$, $z\in \C$ we have defined $g_s$, $u(z)$ and $\Lambda_z$ in \eqref{equ:gpeleno}. 
The following Dani's correspondence in our setting gives a dynamical interpretation of $(\psi,K)$-Dirichlet numbers. 
\begin{Prop}\label{prop:dani}
Let $\psi: [1,\infty)\to (0,1)$ be a continuous decreasing function satisfying $\psi(t)<c_K/t$ for all $t\geq 1$ and the function $t\mapsto t\psi(t)$ is increasing. Then there exists a continuous decreasing function $\ttr=\ttr_{\psi}:  [s_0,\infty)\to (0,\infty)$ {with $s_0=\frac12\log(\frac{1}{\psi(1)})$} such that 
\begin{align}\label{equ:daniliminf}
z\in \DI_K(\psi)\quad \Leftrightarrow\quad \Delta(g_s\Lambda_z)> \ttr(s)\quad \text{for all $s\gg 1$ sufficiently large}. 
\end{align}
Moreover, for any $\alpha, \tau>0$ and $\beta \geq 0$  we have
\begin{align}\label{equ:dieq}
\sum_k k^{-1}F_{\psi}(k)^{\alpha}\log^{\beta}\left(\frac{1}{F_{\psi}(k)} \right)=\infty\quad \Leftrightarrow \quad \sum_k \ttr(\tau k)^{\alpha}\log^{\beta}\left( 1+\frac{1}{\ttr(\tau k)} \right )=\infty.
\end{align}
Here we recall that $F_{\psi}(t)=\tfrac{c_K-t\psi(t)}{c_K}$ for all $t>1$.
\end{Prop}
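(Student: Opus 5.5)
The plan is to follow the standard Dani correspondence for uniform approximation as in Kleinbock--Str\"ombergsson--Yu, with the change of variables adapted to the complex setting. Given $z$ and large $t$, we look for $s=s(t)$ and a radius $\rho$ such that solvability of \eqref{equ:unifapp} is equivalent to $g_s\Lambda_z\cap \B_{\C^2}(\rho)\neq\{\bm 0\}$. Writing $g_s u(z)(-p,q)^t=(e^s(qz-p),e^{-s}q)$, we impose $e^s\psi(t)=e^{-s}t$. This gives
\[
e^{2s}=\frac{t}{\psi(t)},\qquad \rho=\sqrt{t\psi(t)}.
\]
Since $t\mapsto t/\psi(t)$ is continuous and strictly increasing (as $\psi$ is decreasing and positive), it is a bijection from $[1,\infty)$ onto $[1/\psi(1),\infty)$. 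Hence $s$ ranges over $[s_0,\infty)$ with $s_0=\frac12\log(1/\psi(1))$, and $t=t(s)$ is a continuous increasing inverse. We then define
\[
\ttr(s):=\log\frac{r_K}{\sqrt{t(s)\psi(t(s))}}=\frac12\log\frac{c_K}{t(s)\psi(t(s))},
\]
using $c_K=r_K^2$ from \eqref{equ:costrel}. This is positive because $\psi(t)<c_K/t$, and it is continuous and decreasing because $t\psi(t)$ is increasing.

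With these choices, \eqref{equ:unifapp} is solvable exactly when some nonzero $\bm v\in g_s\Lambda_z$ has $\|\bm v\|<\rho$, i.e.\ $\sys(g_s\Lambda_z)<\rho$, i.e.\ $\Delta(g_s\Lambda_z)>\ttr(s)$. The condition $q\neq 0$ is automatic: if $q=0$ then $p\neq 0$, so $|p|\geq 1>\psi(t)$. Strictness of the inequalities matches the strict inequality in $\Delta>\ttr$. Since $s\mapsto t(s)$ is an increasing bijection, "for all large $t$" is equivalent to "for all large $s$", which gives \eqref{equ:daniliminf}.

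For \eqref{equ:dieq}, write $u(t):=t\psi(t)/c_K\in(0,1)$, so that $F_\psi=1-u$ and $\ttr=-\frac12\log u$. When $F$ is small, $\ttr\asymp F$; more precisely, $\ttr\geq \frac12F$ always, and $\ttr\asymp F$ whenever $F\leq 1/2$. Consequently the summand $\ttr^\alpha\log^\beta(1+1/\ttr)$ is comparable to $F^\alpha\log^\beta(1/F)$ when $F$ is small. When $F$ is bounded away from $0$, both summands are bounded below by a positive constant. The logarithmic factors need a little care near $F\to1$, where $\log(1/F)\to0$ while $\log(1+1/\ttr)$ does not. Since $F$ is decreasing, however, either $F\not\to 0$, in which case both series diverge (the $k^{-1}$ series because $F(k)\geq c>0$ for all $k$ and $\sum k^{-1}=\infty$, and the other because its terms are bounded below), or $F\to0$, in which case the comparison above applies termwise for large $k$. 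The remaining task is to pass between the two series.

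This passage is the step I expect to be the main obstacle, since it is the discrete-sum bookkeeping between $\sum_k k^{-1}G(F(k))$ and $\sum_k G(\ttr(\tau k))$. Because $\ttr$ is monotone, $\sum_k G(\ttr(\tau k))\asymp_\tau\int G(\ttr(s))\,ds$. Substituting $s=\frac12\log(t/\psi(t))=\log t+\frac12\log(c_K)-\frac12\log u(t)$, I plan to compare with $\int G(F(t))\,dt/t$. The term $-\frac12\log u=\ttr$ is itself monotone decreasing, so $ds=dt/t+d(\ttr)$ with $d\ttr\leq 0$. This contribution is controlled because $\int G(\ttr)\,|d\ttr|\leq\int_0^{\ttr(s_0)}G(x)\,dx<\infty$. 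Hence the two integrals diverge together, and the Cauchy condensation-type comparison with $\sum_k k^{-1}G(F(k))$ uses only the monotonicity of $F$ again.
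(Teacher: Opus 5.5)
Your proposal is correct and follows the paper's proof. The paper defines $\ttr$ by exactly the same relation $e^s\psi(t)=e^{-s}t=r_Ke^{-\ttr(s)}$ and reads off \eqref{equ:daniliminf} from the definitions together with $c_K=r_K^2$. For \eqref{equ:dieq} it cites \cite[Lemma 6.1]{KleinbockStrombergssonYu2022} for $\tau=1$, then passes to general $\tau$ using the eventual monotonicity of $s\mapsto \ttr(s)^{\alpha}\log^{\beta}(1+1/\ttr(s))$; this is the same integral-comparison idea you carry out directly via $ds=dt/t+d\ttr$.

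Two small corrections, neither affecting the argument:
\begin{itemize}
\item The substitution should read $s=\log t-\frac12\log c_K+\ttr(s)$; you have the sign of the constant flipped, which is harmless.
\item The sum--integral comparison needs $x\mapsto x^\alpha\log^\beta(1+1/x)$ to be increasing near $0$, so that the summand is eventually decreasing in $s$. Monotonicity of $\ttr$ alone does not suffice.
\end{itemize}
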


\begin{proof}
The function $\ttr=\ttr_{\psi}$ is uniquely determined by $\psi$ via the relations
\begin{align}\label{equ:deredani}
e^s\psi(t)=e^{-s}t=:r_Ke^{-\ttr(s)}.
\end{align}
Then \eqref{equ:daniliminf} follows from the definition of $\DI_K(\psi)$ and $\Delta$ and the relation that $c_K=r_K^2$ (see \eqref{equ:costrel}). The proof of \eqref{equ:dieq} when $\tau=1$ follows exactly the same line as that in \cite[Lemma 6.1]{KleinbockStrombergssonYu2022}. Thus to show \eqref{equ:dieq} for a general $\tau>0$, we need to show the following equivalence: For any $\tau>0$,
\begin{align}\label{equ:itmequiva}
\sum_k \ttr(k)^{\alpha}\log^{\beta}\left( 1+\frac{1}{\ttr(k)} \right )=\infty\quad \Leftrightarrow \quad \sum_k \ttr(\tau k)^{\alpha}\log^{\beta}\left( 1+\frac{1}{\ttr(\tau k)} \right )=\infty.
\end{align}
For this, note that since $\ttr$ is a decreasing continuous function, we have if $\lim_{s\to\infty}\ttr(s)>0$, then \eqref{equ:dieq} holds  since both series in \eqref{equ:dieq} diverge; if $\lim_{s\to\infty}\ttr(s)=0$, then the function $s\mapsto \ttr(s)^{\alpha}\log^{\beta}(1+\frac{1}{\ttr(s)})$ is continuous and eventually decreasing from which \eqref{equ:dieq} follows immediately.
\end{proof}

\medskip

In view of Dani's correspondence in Proposition \ref{prop:dani}, the proof of the desired zero-one laws in Theorem \ref{thm: zero-one laws} can be reduced to a shrinking target problem on the homogeneous space $X$. For any $\tau, r>0$, define 
\begin{align}\label{eq: upper bound set for smooth approx}
    \widetilde{\Delta}_\tau(r):=\bigcup_{0\leq s<1}g_{-\tau s}\Delta^{-1}[0,r]. 
\end{align}
As a direct consequence of Dani's correspondence, we have the following relation bounding $\mathbf{DI}_K(\psi)^c$ from above and below by two limsup sets using the target sets $\widetilde{\Delta}_\tau(r)$.

\begin{Lem}\label{lemma: limsupbd}
Let $\psi$ and $\ttr=\ttr_{\psi}$ be as given in Proposition \ref{prop:dani}. Then we have for any $\tau>0$,
\begin{align}\label{equ:limsupbd}
\limsup_{k\to\infty}(g_{-\tau k}\widetilde{\Delta}_\tau({\ttr({\tau}(k+1))})\cap \cY)\subset \mathbf{DI}_K(\psi)^c\subset \limsup_{k\to\infty}(g_{-\tau k}\widetilde{\Delta}_\tau({\ttr({\tau} k)})\cap \cY).
\end{align}

\end{Lem}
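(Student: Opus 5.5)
By Proposition \ref{prop:dani}, $z\notin \DI_K(\psi)$ exactly when there are arbitrarily large $s$ with $\Delta(g_s\Lambda_z)\le \ttr(s)$, i.e.\ $g_s\Lambda_z\in \Delta^{-1}[0,\ttr(s)]$. The plan is to discretize the continuous time $s$ into the windows $[\tau k,\tau(k+1))$. Within each window we compare $\ttr(s)$ with the values of $\ttr$ at the endpoints, using that $\ttr$ is decreasing, and that the level sets $\Delta^{-1}[0,r]$ increase with $r$.

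For the right-hand inclusion, let $z\in \DI_K(\psi)^c$. Then there is an unbounded sequence $s_n\to\infty$ with $g_{s_n}\Lambda_z\in\Delta^{-1}[0,\ttr(s_n)]$. Write $s_n=\tau k_n+\tau t_n$ with $k_n\in\N$ and $t_n\in[0,1)$. Since $\ttr$ is decreasing, $\ttr(s_n)\le \ttr(\tau k_n)$, so $g_{s_n}\Lambda_z\in \Delta^{-1}[0,\ttr(\tau k_n)]$. This gives
\[
g_{\tau k_n}\Lambda_z=g_{-\tau t_n}g_{s_n}\Lambda_z\in g_{-\tau t_n}\Delta^{-1}[0,\ttr(\tau k_n)]\subset \widetilde{\Delta}_\tau(\ttr(\tau k_n)).
\]
Hence $\Lambda_z\in g_{-\tau k_n}\widetilde{\Delta}_\tau(\ttr(\tau k_n))\cap\cY$. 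Since $k_n\to\infty$, infinitely many distinct $k$ occur, so $\Lambda_z$ lies in the limsup set.

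For the left-hand inclusion, suppose $\Lambda_z\in g_{-\tau k}\widetilde{\Delta}_\tau(\ttr(\tau(k+1)))$ for infinitely many $k$. For each such $k$ there is $t\in[0,1)$ with $g_{\tau k}\Lambda_z\in g_{-\tau t}\Delta^{-1}[0,\ttr(\tau(k+1))]$. Setting $s=\tau(k+t)$, this means $g_s\Lambda_z\in\Delta^{-1}[0,\ttr(\tau(k+1))]$. Because $s<\tau(k+1)$ and $\ttr$ is decreasing, $\ttr(\tau(k+1))\le\ttr(s)$, so $\Delta(g_s\Lambda_z)\le\ttr(s)$. The times $s\ge\tau k$ are unbounded, so the condition in \eqref{equ:daniliminf} fails and $z\in\DI_K(\psi)^c$.

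The only points needing care are bookkeeping. First, we identify $\cY$ with $\C/\cO_K$, so that $\Lambda_z$ being in the sets is the same as $z$ being in them. Second, $\ttr$ is only defined for $s\ge s_0$, which is harmless because only large $k$ matter. Third, the statement in \eqref{equ:daniliminf} is ``for all $s\gg1$'', whose negation is exactly ``for an unbounded set of $s$''; this is the equivalence used in both directions. There is no genuine obstacle here: the argument is pure monotonicity of $\ttr$ and of the sublevel sets of $\Delta$.
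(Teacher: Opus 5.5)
Your proof is correct and follows the paper's own argument. The paper also splits time into windows $[\tau k,\tau(k+1))$ and uses Proposition \ref{prop:dani} to write $\DI_K(\psi)^c$ as the limsup over $k$ of the sets $g_{-\tau k}\bigcup_{0\le s<1}g_{-\tau s}\Delta^{-1}[0,\ttr(\tau(k+s))]\cap\cY$, then sandwiches this using that $\ttr$ is decreasing; the only difference is that you prove the two inclusions directly instead of first stating this exact identity.
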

\begin{proof}

Note that \eqref{equ:daniliminf} is equivalent to 
\begin{align*}
z\notin \mathbf{DI}_K(\psi)\quad \Leftrightarrow\quad \Delta(g_s\Lambda_z)\leq \ttr(s)\quad \text{for an unbounded set of $s>s_0$}.
\end{align*}
Then one easily sees that the above right hand side is equivalent to 
\begin{align*}
g_{\tau k}\Lambda_z\in \bigcup_{0\leq s<1}g_{-\tau s}\Delta^{-1}[0, \ttr(\tau(k+s))]\quad \text{for infinitely many positive integers $k$.}
\end{align*}
Using the natural identification between $\cY$ and $\C/\cO_K$ 
this is equivalent to saying that 
\begin{align}
    \mathbf{DI}_K(\psi)^c=\limsup_{k\to\infty}(\bigcup_{0\leq s<1}g_{-\tau s}\Delta^{-1}[0, \ttr(\tau(k+s))]\cap \cY).
\end{align}
Then the inclusion relations follow immediately from the above relation, together with the fact that $\ttr$ is a decreasing function in $s$.
\end{proof}

In view of the relation \eqref{equ:limsupbd},  to determine whether $\mathbf{DI}_K(\psi)^c$ is of full or null Lebesgue measure, it suffices to establish zero-one laws for the two  limsup sets in \eqref{equ:limsupbd}. The key dynamical input here is the effective single and double equidistribution theorems in Theorem \ref{thm: single equidistribution} and Theorem \ref{thm:effdoubequi}. 
This is exactly our approach when showing $\mathbf{DI}_K(\psi)^c$ is of zero Lebesgue measure. To show $\mathbf{DI}_K(\psi)^c$ is of full Lebesgue measure, as mentioned in the introduction, we need a certain disjointness statement to control certain short-range mixing contribution. To establish this disjointness statement, we will replace the target sets above by suitably chosen subsets which we will describe in the next subsection.

\medskip

\subsection{A disjointness statement}
We let $K=\Q(\sqrt{-1})$ or $K=\Q(\sqrt{-3})$ for the remaining part of this section. Fix a parameter
\begin{align}\label{equ:taudef}
\tau=\tau_K:=
\begin{cases}
\log(2+\sqrt 3) & K=\mathbb Q(\sqrt{-1}),\\
1 & K=\mathbb Q(\sqrt{-3}).
\end{cases}
\end{align}

Fix a parameter $0<s_0<1$ to be determined. Let $r_{\e_0}$ be as in Proposition \ref{prop:ulbd-1} if $K=\Q(\sqrt{-1})$ and be as in Proposition \ref{prop:ulbd-3} if $K=\Q(\sqrt{-3})$.
For any $0<r<r_{\e_0}$ define
\begin{align}\label{eq: tilde delta tau}
\widetilde{\Delta}'_{\tau}(r):=\bigcup_{0\leq s<s_0}g_{-\tau s}\underline{K}_r,
\end{align}
where $\underline{K}_r$ is as in \eqref{equ:lowbdkr} if $K=\Q(\sqrt{-1})$ and is as in \eqref{equ:lowbdkr-3} if $K=\Q(\sqrt{-3})$.
In view of the relations \eqref{equ:incluupplow} and \eqref{equ:incluupplow-3}, we clearly have that 
\begin{align}\label{equ:deltaincl}
\widetilde{\Delta}'_\tau(r)\subset \widetilde{\Delta}_\tau(r),\qquad (\forall\, 0<r<r_{\e_0}).
\end{align}
{Moreover, from the proof {Propositions \ref{prop:ulbd-1} and \ref{prop:ulbd-3}} we see that $\{\underline{K}_r\}$ satisfies the property that there exists some $r_0>0$ such that 
\begin{align*}
\underline{K}_r\subset \Delta^{-1}[0,r]\quad \text{and}\quad \mu_K(\underline{K}_r)\gg \mu_K(\Delta^{-1}[0,r]),\quad \forall\, 0<r<r_0, 
    \end{align*}
From these two relations, together with Proposition \ref{prop: measure estimates the thickenings} we have
\begin{align}\label{equ:thickeningmeest}
    \mu_K(\widetilde{\Delta}'_{\tau}(r))\asymp \mu_K(\widetilde\Delta_\tau(r))
\asymp
\begin{cases}
r^4 & K=\mathbb Q(\sqrt{-1}),\\
r^2\log\!\left(\dfrac1r\right) & K=\mathbb Q(\sqrt{-3}),
\end{cases}
\qquad \text{as } r\to 0^+.
\end{align}
}

We will use the above relation to handle the divergence case of the shrinking target problem. A key property that these sets $\widetilde{\Delta}_r'$ satisfy is the following disjointness statement. 
\begin{Prop}\label{prop:disjoint}
Let $\tau=\tau_K$ be as in \eqref{equ:taudef}. Then 
there exist some sufficiently small $0<s_0<\frac12$ and some  $c_1>0$ such that for any $0<r<c_1$, we have for any $k_0\in \Z$, the sets 
\begin{align*}
g_{\tau k_0}\widetilde{\Delta}'_\tau(r),\ g_{\tau (k_0+1)}\widetilde{\Delta}'_\tau(r),\ \cdots, \ g_{\tau (k_0+J_r)}\widetilde{\Delta}'_\tau(r)
\end{align*}
are pairwise disjoint, where $J_r:=\left \lfloor{\frac{1}{4\tau}\log(\frac{1}{r})}\right \rfloor$.
\end{Prop}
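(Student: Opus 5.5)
The plan is to reduce the statement to a single non-intersection claim and then verify that claim separately in the two cases.

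\textbf{Reduction.} Since $g_{\tau k_0}$ is a bijection of $X$, we may take $k_0=0$. It then suffices to show that for $1\le m\le J_r$,
$$
g_{\tau m}\widetilde{\Delta}'_\tau(r)\cap\widetilde{\Delta}'_\tau(r)=\emptyset .
$$
Unwinding \eqref{eq: tilde delta tau}, this means showing $g_t\underline{K}_r\cap\underline{K}_r=\emptyset$ for every $t=\tau(m+s-s')$ with $s,s'\in[0,s_0)$. By the choice of $J_r$, such $t$ lies in
$$
[\tau(1-s_0),\ \tfrac14\log(1/r)+\tau] .
$$
Two features of this range matter. First, $t$ is bounded below by an absolute constant, roughly $\tau$. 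Second, $e^{2t}\ll r^{-1/2}$, so conjugating a $g\in\B_G(\e_0)$ whose off-diagonal entries are $O(\sqrt r)$ by $g_t$ keeps those entries $O(r^{1/4})$, which is still small.

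\textbf{Eisenstein case.} Suppose $g_tg\Z[j]^2=g'\Z[j]^2$ with $g,g'\in\underline{\cK}_r$. Then $\gamma:=g'^{-1}g_tg\in\SL_2(\Z[j])$. I will compute its lower-left entry:
$$
\gamma_{21}=-g'_{21}e^tg_{11}+g'_{11}e^{-t}g_{21}.
$$
Using $|g'_{21}|<\sqrt r$, $|g_{21}|<\sqrt r$ and $e^t\ll r^{-1/4}$, we get $|\gamma_{21}|\ll r^{1/4}<1$. Hence $\gamma_{21}=0$, because nonzero elements of $\Z[j]$ have absolute value at least $1$. Then $\gamma$ is upper triangular, so $\gamma_{11}$ must be a unit. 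On the other hand,
$$
\gamma_{11}=g'_{22}e^tg_{11}-g'_{12}e^{-t}g_{21}=e^t(1+O(\e_0))+O(\sqrt r).
$$
This has modulus at least $e^{\tau(1-s_0)}(1-O(\e_0))>1$ once $s_0$ and $\e_0$ are small, a contradiction. In this case the angular conditions in $\underline{K}_r$ are not needed; only the bound $|g_{21}|<\sqrt r$ is used.

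\textbf{Gaussian case.} Here I use Lemma \ref{rmk:cloges}. Write $t=n\tau+t'$ with $n\in\Z$ and $t'\in(-\tau s_0,\tau s_0)$. This is possible because $t/\tau$ lies within $s_0$ of the integer $m$; in fact $n=m\ge1$. Since $g_{n\tau}h_0\cO_K^2=h_0\cO_K^2$, for $\Lambda=gh_0\cO_K^2\in\underline{K}_r$ we get
$$
g_t\Lambda=(g_tgg_{-t})\,g_{t'}\,h_0\cO_K^2 .
$$
The conjugate $g_tgg_{-t}$ has entries $g_{11}$, $g_{22}$, $e^{2t}g_{12}$ and $e^{-2t}g_{21}$. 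By the second feature of the range, $e^{2t}|g_{12}|\ll r^{1/4}$, so $\tilde g:=g_tgg_{-t}g_{t'}$ lies in $\B_G(2\e_0)$ for $r$ small. Suppose now $\tilde g h_0\cO_K^2=g'h_0\cO_K^2$ with $g'\in\B_G(\e_0)$. Then $g'^{-1}\tilde g\in h_0\G_Kh_0^{-1}\cap\B_G(3\e_0)=\{\Id\}$, after shrinking $\e_0$ by discreteness. So $g'=\tilde g$, and in particular
$$
|g'_{12}|=e^{2t}e^{-2t'}|g_{12}|\ge e^{2\tau(1-2s_0)}\frac{r}{2(2+\sqrt3)}=(2+\sqrt3)^{1-4s_0}\frac r2>\frac r2,
$$
using $e^{2\tau}=(2+\sqrt3)^2$. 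This contradicts $g'\in\underline{K}_r$. The lower bound $|g_{12}|>r/(2(2+\sqrt3))$ in \eqref{equ:lowbdkr} is exactly what is needed for this step.

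\textbf{Expected main obstacle.} The delicate point is the Gaussian bookkeeping. One must check that the fractional shift $g_{t'}$ and the conjugation by $g_t$ keep everything inside the neighborhood where representatives are unique. One must also confirm that $s_0$ can be chosen so that $(2+\sqrt3)^{1-4s_0}>1$ while $\B_G(\e_0)$, enlarged by $g_{t'}$, still injects into $X$ near $h_0\cO_K^2$. I would first fix $\e_0$ from Proposition \ref{prop:ulbd-1}, then take $s_0$ small relative to both $\e_0$ and $\tau$, and finally take $c_1$ so small that $r^{1/4}\ll\e_0$.
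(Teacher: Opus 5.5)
Your proposal is correct. The reduction to $1\le m\le J_r$ and the Gaussian case follow the paper's proof essentially verbatim. The paper also uses $g_\tau h_0\Z[i]^2=h_0\Z[i]^2$ to write one representative as a $g_{\tau(k-\delta)}$-conjugate of the other times a small diagonal factor. It likewise shrinks $\e_0,s_0$ so that $\mathrm{g}\mapsto\mathrm{g}h_0\Z[i]^2$ is injective on the relevant neighbourhood, and contradicts $|g_{12}|<r/2$ via the factor $e^{(2k-\delta)\tau}>e^{\tau}=2+\sqrt3$. There is one harmless slip: the $(1,2)$ entry of $\tilde g=g_tgg_{-t}g_{t'}$ is $e^{2t-t'}g_{12}=e^{2m\tau+t'}g_{12}$, not $e^{2t-2t'}g_{12}$. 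Since $2m\tau+t'>\tau(2-s_0)\ge 2\tau(1-2s_0)$, your lower bound and the contradiction survive unchanged.

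In the Eisenstein case your route differs from the paper's.

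\emph{The paper's argument} goes through the systole. The first column $(g_{11},g_{21})$ of a representative of $\Lambda'\in\underline{K}_r$ is a lattice vector in $\cR_r$. After applying $g_{\delta-k}$, its first coordinate shrinks below $e^{-1/2}(1+r/4)$ and its second stays below $e\,r^{1/4}$. So $g_{\delta-k}\Lambda'$ has a vector of norm $<e^{-r}$, contradicting \eqref{equ:svec}.

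\emph{Your argument} instead works with the explicit element $\gamma=g'^{-1}g_tg\in\SL_2(\Z[j])$. You force $\gamma_{21}=0$ from $\min_{x\in\Z[j]\smallsetminus\{0\}}|x|=1$, and then contradict $\gamma_{11}\in\Z[j]^{\times}$.

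The two are closely related. With $\bm{e}_1=(1,0)$, the vector $\gamma^{-1}\bm{e}_1=(\gamma_{22},-\gamma_{21})$ is the coordinate vector, in the basis given by the columns of $g$, of the paper's short vector $g_{-t}g'\bm{e}_1$.

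The paper's version is coordinate-free and proves slightly more: one of the two lattices only needs to lie in $\Delta^{-1}[0,r]$, and no arithmetic of $\Z[j]$ is used. Yours needs both representatives to lie in $\underline{\cK}_r$ (in particular $g'\in\B_G(\e_0)$ and $|g'_{21}|<\sqrt r$). In exchange, it is fully explicit and runs in the same spirit as your Gaussian argument, through an element of $\G_K$.
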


\begin{proof}
We prove by contradiction. Suppose not, then there exists some $1\leq k\leq J_r$ such that $g_{\tau k}\widetilde{\Delta}'_{\tau}(r)\cap \widetilde{\Delta}'_{\tau}(r)\neq\emptyset$. That is, there exist $\Lambda\in X$, $0\leq s, s'<s_0$ such that 
$
g_{\tau(s-k)}\Lambda\in \underline{K}_r$ and $ g_{\tau s'}\Lambda\in \underline{K}_r$. Writing $\Lambda':= g_{\tau s'}\Lambda$ and $\delta:=s-s'\in (-s_0,s_0)$, these two conditions are equivalent to 
\begin{align}\label{equ:intersecond}
\Lambda'\in \underline{K}_r\quad \text{and}\quad g_{\tau(\delta-k)}\Lambda'\in \underline{K}_r.
\end{align}
We first consider the case $K=\Q(\sqrt{-1})$, so that $\underline{K}_r$ is given as in \eqref{equ:lowbdkr} and $\tau=\log(2+\sqrt{3})$. Let $c_1:=\min\{r_{\e_0}, \e_0^2\}$ where $r_{\epsilon_0}$ is as in Lemma \ref{lem:comarg}. 
By definition, there exists some $g=\begin{pmatrix}
g_{11} & g_{12}\\
g_{21} & g_{22}\end{pmatrix}, g'=\begin{pmatrix}
g'_{11} & g'_{12}\\
g'_{21} & g'_{22}\end{pmatrix}\in \B_{G}(\e_0)$ satisfying that 
\begin{align*}
1-\frac{r}{2}<|g_{11}|<1+\frac{r}{2},\quad   \frac{r}{2(2+\sqrt{3})}<|g_{21}|, |g_{12}|<\frac{r}{2},
\end{align*}
and
\begin{align*}
1-\frac{r}{2}<|g'_{11}|<1+\frac{r}{2}, \quad  \frac{r}{2(2+\sqrt{3})}<|g_{21}'|, |g'_{12}|<\frac{r}{2},
\end{align*}
such that $\Lambda'=gh_0\Z[i]^2$ and $g_{\tau(\delta-k)}\Lambda'=g'h_0\Z[i]^2$. Using the identity $g_{\tau}h_0\Z[i]^2=h_0\Z[i]^2$ we get
\begin{align}\label{equ:matrixiden}
gh_0\Z[i]^2=\Lambda'=g_{\tau(k-\delta)}g'h_0\Z[i]^2=\begin{pmatrix} e^{-\delta\tau}g'_{11} & e^{(2k-\delta)\tau}g_{12}'\\ e^{-(2k-\delta)\tau}g_{21}' & e^{\delta\tau}g_{22}'\end{pmatrix}h_0\Z[i]^2.
\end{align}
We choose $\e_0, s_0>0$ sufficiently small so that  $\bigcup_{0\leq t<s_0}g_{-\tau t}\B_{G}(\e_0)\subset \B_{G}(\inj(x_0))$ with $x_0:=h_0\cO_K^2\in X$. {Then the map 
$$
\bigcup_{0\leq t<s_0}g_{-\tau t}\B_{G}(\e_0) \to X,\quad \mathrm{g}\mapsto   \mathrm{g}h_0\Z[i]^2
$$ is injective.} We can  thus see from the  identity \eqref{equ:matrixiden} and the assumptions that $g, g'\in \B_G(\e_0)$  that 
\begin{align}\label{equ:contrad-1}
g=\begin{pmatrix} e^{-\delta\tau}g'_{11} & e^{(2k-\delta)\tau}g_{12}'\\ e^{-(2k-\delta)\tau}g_{21}' & e^{\delta\tau}g_{22}'\end{pmatrix}\quad \text{as long as $e^{2k\tau}|g_{12}'|<\e_0$.}
\end{align}  
This can be guaranteed by $e^{2 J_r\tau}r/2<\e_0$, i.e. $J_r<\frac{1}{2\tau}\log(\frac{2\e_0}{r})$. This condition is satisfied by setting $J_r=[\frac{1}{4\tau}\log(\frac{1}{r})]$ provided that $0<r< \e_0^2$. We thus have \eqref{equ:contrad-1}, but this is a contradiction since it would imply 
$$
|g_{12}|=e^{(2k-\delta)\tau}|g_{12}'|> e^{\tau}\times \frac{r}{{2}(2+\sqrt{3})}=\frac{r}{2},
$$ 
contradicting the assumption $|g_{12}|<\frac{r}{2}$. This finishes the proof of the desired disjointness statement when $K=\Q(\sqrt{-1})$.

Next, we assume $K=\Q(\sqrt{-3})$, so that $\underline{K}_r$ is given as in \eqref{equ:lowbdkr-3} and $\tau=1$. Note that we see from the definition of $\underline{K}_r$ that any lattice in $\underline{K}_r$ contains a nonzero vector in the region
$$
\cR_r:=\{(z_1,z_2)\in \C^2: 1-\frac{r}{4}<|z_1|<1+\frac{r}{4}, r<|z_2|<\sqrt{r} \}.
$$ 
Let $c_1:=r_{\e_0}$ and $J_r:=\left \lfloor{\frac{1}{4}\log(\frac{1}{r})}\right \rfloor$. Note that the condition \eqref{equ:intersecond} implies that there exists some nonzero $\bm{v}=(v_1,v_2)\in \Lambda'$ such that $\bm{v}\in \cR_r$. On the other hand, the condition $g_{\delta-k}\Lambda'\in \underline{K}_r\subset \Delta^{-1}[0,r]$ implies that $\|g_{\delta-k}\bm{v}\|\geq e^{-r}$ (see \eqref{equ:svec}). Clearly we have $e^{\delta-k}|v_1|<e^{-1/2}(1+\frac{r}{4})<e^{-r}$. This forces $e^{k-\delta}|v_2|\geq e^{-r}$, but we then get a contradiction since 
$$
e^{k-\delta}|v_2|<e^{J_r+1}\sqrt{r}<er^{1/4}<e^{-r}
$$
provided that $r<0.01$. This finishes the proof of this proposition  when $K=\Q(\sqrt{-3})$.
\end{proof}

\medskip

\subsection{Smooth approximation and estimates on norms}

We also need the following result on smooth approximation to apply Theorems \ref{thm: single equidistribution} and \ref{thm:effdoubequi}. 
\begin{Prop}\label{prop: smooth approximation}
There exists some $L>0$ such that for any $r>0$ sufficiently small, there exist smooth functions $\underline{\phi}_r, \overline{\phi}_r\in C^{\infty}_c(X)$ satisfying
\begin{align*}
\chi_{\widetilde{\Delta}_{\tau}''(r)}\leq \underline{\phi}_r\leq \chi_{\widetilde{\Delta}_{\tau}'(2r)},
\quad \chi_{\widetilde{\Delta}_{\tau}(r)}\leq \overline{\phi}_r\leq \chi_{\widetilde{\Delta}_{\tau}({2r})}\quad \text{and}\quad \max\{\|\underline{\phi}_r\|_{C^7}, \|\overline{\phi}_r\|_{C^7}\}\ll r^{-L},
\end{align*}
where
\begin{align}\label{eq: low bound of smooth approx}
\widetilde{\Delta}_{\tau}''(r)=\bigcup_{0\leq s<s_0} g_{-\tau s}\underline{K}_r', 
\end{align}
for some fixed small parameter \(0<s_0 <{0.01} \). Here in the case $K=\Q(\sqrt{-1})$,
\[ \underline{K}_r':=\left\{gh_0\Z[i]^2:g\in \B_{G}(\epsilon_0/4),|g_{11}|\in (1-r/4,1+r/4),\,   r/(1+\sqrt{3})<|g_{12}|,|g_{21}|<r/2\right\}, \]
and in the case $K=\Q(\sqrt{-3})$,
\[ \underline{K}_r':=\left\{ g\Z[j]^2: \begin{array}{l} g\in \B_{G}(\e_0/4),\, |g_{11}|\in (1-r/4, 1+r/4),\, 0<|g_{12}g_{21}|< r/4,\\ 4r<|g_{21}|<\sqrt{r}/2,\, r<|g_{12}|,\, \mathrm{Arg}(-g_{21}),\mathrm{Arg}(\overline{g_{12}})\in (\tfrac{27\pi}{72}, \tfrac{33\pi}{72})\end{array}\right\}. \]
Moreover, in either case, we have
$$
\mu_K(\widetilde{\Delta}_{\tau}''(r))\asymp \mu_K(\widetilde{\Delta}_{\tau}'(r)),\quad \text{ as $r\to 0^+$}.
$$
\end{Prop}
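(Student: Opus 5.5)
The plan is to build both functions by convolving the characteristic function of a slightly enlarged or shrunk set with a smooth bump on $G$ of radius $\eta=r^{M}$ (or $\eta=c\,r$), for a fixed large $M$. Fix a nonnegative $\rho_\eta\in C_c^\infty(G)$ supported in $\B_G(\eta)$ with $\int\rho_\eta=1$ and $\|\rho_\eta\|_{C^7}\ll \eta^{-(7+\dim G)}=\eta^{-13}$. For a Borel set $A\subset X$ set $\phi=\rho_\eta*\chi_{A}$, i.e. $\phi(x)=\int\rho_\eta(h)\chi_A(h^{-1}x)\,\dd h$. Then $\|\phi\|_{C^7}\ll\eta^{-13}$ because the derivatives fall on $\rho_\eta$. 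Moreover $\chi_{A_{-\eta}}\le\phi\le\chi_{\B_G(\eta)A}$, where $A_{-\eta}:=\{x:\B_G(\eta)x\subset A\}$. Since all sets involved lie in a fixed compact set, $\phi\in C_c^\infty(X)$. Taking $\eta=cr$ with $c$ small gives $L=13$, so the whole task reduces to two set inclusions:
\[
\B_G(cr)\,\widetilde{\Delta}_\tau(r)\subset\widetilde{\Delta}_\tau(2r),\qquad \B_G(cr)\,\widetilde{\Delta}''_\tau(r)\subset\widetilde{\Delta}'_\tau(2r).
\]
With these, $\overline\phi_r:=\rho_\eta*\chi_{\B_G(\eta)\widetilde\Delta_\tau(r)}$ and $\underline\phi_r:=\rho_\eta*\chi_{\B_G(\eta)\widetilde\Delta''_\tau(r)}$ satisfy all the bounds, using inclusions at radius $2\eta$.

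For the upper function, take $x=g_{-\tau s}\Lambda$ with $\Lambda\in\Delta^{-1}[0,r]$ and $h\in\B_G(cr)$. Write $hx=g_{-\tau s}(g_{\tau s}hg_{-\tau s})\Lambda$. Since $0\le s<1$, the conjugate $g_{\tau s}hg_{-\tau s}$ lies in $\B_G(C_\tau cr)$. An element $h'\in\B_G(\eta')$ changes every vector norm by a factor in $[e^{-O(\eta')},e^{O(\eta')}]$, so $\Delta(h'\Lambda)\le\Delta(\Lambda)+O(\eta')\le 2r$ once $c$ is small. This gives the first inclusion.

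For the lower function, take $x=g_{-\tau s}gh_0\cO_K^2$ (or $g\Z[j]^2$) with $g$ satisfying the conditions in $\underline K'_r$, and $h\in\B_G(cr)$. Again write $hx=g_{-\tau s}\,(h'g)\cdots$ with $h'=g_{\tau s}hg_{-\tau s}\in\B_G(C cr)$, using $s<s_0<0.01$. It then suffices to check that $h'g$ satisfies the defining inequalities of $\underline K_{2r}$.

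\textbf{Gaussian case.} Every entry of $h'g$ differs from the corresponding entry of $g$ by $O(cr)$. So $|(h'g)_{11}|\in(1-r/4-O(cr),1+r/4+O(cr))\subset(1-r,1+r)$. For the off-diagonal entries, $r/(1+\sqrt3)-O(cr)>2r/(2(2+\sqrt3))=r/(2+\sqrt3)$, because $1/(1+\sqrt3)>1/(2+\sqrt3)$, and $r/2+O(cr)<r$. Finally $h'g\in\B_G(\e_0)$ since $g\in\B_G(\e_0/4)$.

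\textbf{Eisenstein case.} This is the delicate one, and I expect it to be the main obstacle. The perturbation is additive of size $O(cr)$ while $|g_{21}|,|g_{12}|>r$. Hence the relative error in $g_{21}$ and $g_{12}$ is at most $O(c)$, and their arguments move by $O(c)<\pi/72$. This keeps them inside $(25\pi/72,35\pi/72)$, which is why the margin $27$–$33$ was chosen. The remaining conditions are checked as follows:
\begin{itemize}
\item $|g_{21}|$ stays in $(2r,\sqrt{2r})$, because $4r-O(cr)>2r$ and $\sqrt r/2+O(cr)<\sqrt{2r}$.
\item $|(h'g)_{12}(h'g)_{21}|\le(1+O(c))^2 r/4<2r/4$.
\item The product stays nonzero.
\item $|(h'g)_{11}|$ stays in $(1-r/2,1+r/2)$.
\end{itemize}
The one subtlety is that perturbing $g_{12}$ by $O(cr)$ must not destroy the lower bound on $|g_{12}|$ or the angle; this is precisely why the condition $r<|g_{12}|$ was added to $\underline K'_r$.

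For the measure comparison, $\widetilde\Delta''_\tau(r)\subset\widetilde\Delta'_\tau(r)$ is immediate. For the reverse bound, apply Proposition \ref{prop: measure estimates the thickenings} with $\Delta'_r=\underline K'_r$ and with $\underline K_r$. This reduces it to showing $\mu_K(\underline K'_r)\gg\mu_K(\Delta^{-1}[0,r])$. That follows from the same Haar-measure computations as in the proof of Theorem \ref{thm: measure estimates}. In the Gaussian case the volume is $\asymp r\cdot r^2\cdot r^2=r^5$. In the Eisenstein case the annulus-times-$B_2$ integral, with $r_1\in(4r,\sqrt r/2)$ and $r_2\in(r,r/(4r_1))$, is still $\asymp r^2\log(1/r)$, since $r/(4r_1)>r$ on most of the logarithmic range.
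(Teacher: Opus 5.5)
Your proposal is correct and follows essentially the same route as the paper. Both arguments convolve a bump of radius $\asymp r$ with the indicator of an intermediate set, move the perturbation past $g_{-\tau s}$ by conjugation (which stays bounded since $s<s_0$), and check entrywise that $h'g$ lands in $\underline K_{2r}$; in the Eisenstein case the lower bounds $|g_{12}|>r$ and $|g_{21}|>4r$ make the relative error $O(c)$, so the arguments stay in the wider window. The differences are cosmetic. The paper uses an explicit intermediate set $S_r$ with a bump supported on $\mathcal U_{r/100}$ instead of your $\B_G(\eta)\widetilde\Delta''_\tau(r)$. It also defers $\overline\phi_r$ to \cite{KleinbockStrombergssonYu2022}, whereas you verify it directly via $\Delta(h'\Lambda)\le\Delta(\Lambda)+O(\eta')$.
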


\begin{proof}
We only construct $\underline{\phi}_r$ and show the desired properties for it. The construction of $\overline{\phi}_r$ is similar and simpler (see \cite[Lemma 6.5]{KleinbockStrombergssonYu2022}).

For any $r>0$ small, we define a neighborhood of $\Id$ in $G$ by
\[ \mathcal{U}_r=\{g\in G:\norm{g-\Id}\leq 10^{-10}r, \, \norm{g^{-1}-\Id}\leq 10^{-10}r\}. \]
The definitions of $\underline{\phi}_r$ are different for $K=\Q(\sqrt{-1})$ and $K=\Q(\sqrt{-3})$.

\medskip

For the case $K=\Q(\sqrt{-1})$, we define the set
\[ S_r=\left\{gh_0\Z[i]^2:g\in \B_{G}(\epsilon_0/2),\, |g_{11}|\in (1-r/2,1+r/2),  \, r/3<|g_{12}|,|g_{21}|<3r/4\right\}. \]

For the case $K=\Q(\sqrt{-3})$, on the other hand, we define 
\[ 
S_r=\left\{ g\Z[j]^2: \begin{array}{l} g\in \B_{G}(\e_0/2),\, |g_{11}|\in (1-r/3, 1+r/3),0<|g_{12}g_{21}|< r/3,\\ 3r<|g_{21}|<\sqrt{r},\, r/2<|g_{12}|,\, \mathrm{Arg}(-g_{21}),\mathrm{Arg}(\overline{g_{12}})\in (\tfrac{26\pi}{72}, \tfrac{34\pi}{72})\end{array}\right\}. 
\]
We claim that in both cases,  we have
\begin{align}\label{eq: inclusion for S_r}
    h\underline{K}_r'\subset S_r, \,\, hS_r\subset \underline{K}_{2r}, \quad \forall \, h\in \mathcal{U}_r.
\end{align}
Let us prove (\ref{eq: inclusion for S_r}) separately for these two cases.

\medskip

\underline{Case: $K=\Q(\sqrt{-1})$}. Let $\Lambda\in \underline{K}_r'$, then $\Lambda=gh_0\Z[i]^2$ for $g$ satisfying the corresponding conditions. Write $hg=g'=(g'_{ij})$. As $g'_{ij}=\sum_{k=1}^2h_{ik}g_{kj}$, by the choice of $h$ and $g$, we immediately obtain $h\Lambda\in S_r$. The second inclusion of (\ref{eq: inclusion for S_r}) is proved similarly. 

\medskip

\underline{Case: $K=\Q(\sqrt{-3})$}. Let $\Lambda\in \underline{K}_r'$. Then $\Lambda=g\Z[j]^2$ for $g$ satisfying the condition in the definition of $\underline{K}_r'$. Again we write $hg=g'=(g'_{ij})$. By the choice of $h$ and $g$, it is direct to verify that
\[ |g_{11}'|\in (1-\tfrac{r}{3},1+\tfrac{r}{3}), \, 0<|g_{12}'g_{21}'|<\tfrac{r}{3},\, 3r<|g_{21}'|<\sqrt{r},\, \tfrac{r}{2}<|g_{12}'|. \]
{We now verify the desired angular restrictions for $g_{12}'$ and $g_{21}'$.
By comparing the angular conditions in the definitions of $\underline{K}_r'$ and $S_r$, it suffices to show
\begin{align}
   \max\{ |\mathrm{Arg}(g'_{12}/g_{12})|, |\mathrm{Arg}(g'_{21}/g_{21})|\}<\frac{\pi}{72}.
\end{align}
By direct computation we have 
\begin{align*}
    \mathrm{Arg}(g_{12}'/g_{12})=\mathrm{Arg}(h_{11}+h_{12}g_{22}g_{12}^{-1})=\mathrm{Arg}(h_{11})+\mathrm{Arg}(1+h_{12}g_{22}g_{12}^{-1}h_{11}^{-1}).
\end{align*}
Moreover, by assumptions that  $h\in \mathcal{U}_r$ and $g\in \underline{K}_r'$ we have 
$$
|\mathrm{Arg}(h_{11})|<10^{-5}r\quad \text{and}\quad |h_{12}g_{22}g_{12}^{-1}h_{11}^{-1}|< 10^{-10}r\times \frac{1+r/4}{1-r/4}\times \frac{1}{r}\times \frac{1}{1-10^{-10}r}<10^{-5}.
$$
Now applying the following simple angular inequality:
\begin{align}
    |\mathrm{Arg}(1+z)|\leq 2|z|,\quad \forall\, 0<|z|<0.1,
\end{align}
we get 
\begin{align*}
    \left|\mathrm{Arg}(g_{12}'/g_{12})\right|<|\mathrm{Arg}(h_{11})|+|\mathrm{Arg}(1+h_{12}g_{22}g_{12}^{-1}h_{11}^{-1})|<10^{-5}r+2\cdot 10^{-5}<\frac{\pi}{72}
\end{align*}
as desired. The estimate for $\mathrm{Arg}(g'_{21}/g_{21})$ is similar. Indeed, using again $h\in\mathcal U_r$, $g\in\underline K_r'$ we have
\begin{align*}
\left|\mathrm{Arg}\left(g'_{21}/g_{21}\right)\right|&\leq |\mathrm{Arg}(h_{22})|+\left|\mathrm{Arg}\left(1+h_{21}g_{11}g_{21}^{-1}h_{22}^{-1}\right)\right|\\
&< 10^{-5}r+10^{-10}r\times (1+r/4)\times \frac{1}{4r}\times \frac{1}{1-10^{-10}r}<\frac{\pi}{72}.
\end{align*}
The second inclusion of (\ref{eq: inclusion for S_r}) can be proved similarly. 
}

\medskip

Now let $\theta_r$ be a smooth non-negative bump function supported on $\mathcal{U}_{r/100}$ such that
\begin{align*}
    \int_G \theta_r\dd m_K=1\quad \text{and}\quad 
    \norm{\theta_r}_{C^7}\ll r^{-L}
\end{align*}
for some $L>0$ depending only on $G$, where $\norm{\theta_r}_{C^7}$ is defined similarly as in \eqref{eq: Cm norm}. {Here $m_{K}$ is the Haar measure of $G$ that locally agrees with $\mu_{K}$.} Let 
\begin{align*} 
\tilde{S}_{\tau}(r)=\bigcup_{0\leq s<s_0} g_{-\tau s}S_r,
\end{align*}
and
\begin{align*}
\underline{\phi}_r(\Lambda)=\theta_r*\chi_{\tilde{S}_{\tau}(r)}(\Lambda)=\int_G\theta_r(h)\chi_{\tilde{S}_{\tau}(r)}(h^{-1}\Lambda)\,\dd m_K(h),\qquad (\Lambda \in X).
\end{align*}
For any $\Lambda\in \widetilde{\Delta}_{\tau}''(r)$, there exists $s\in [0,s_0)$ such that $g_{\tau s}\Lambda\in \underline{K}_r'$. Given any $h\in \mathcal{U}_{r/100}$, {since $0\leq s< s_0<0.01$}, we have by direct computation that $g_{\tau s}h^{-1} g_{-\tau s}\in \mathcal{U}_{r}$. Hence, by the first inclusion of (\ref{eq: inclusion for S_r}), we have
\[ g_{\tau s} {h^{-1}}\Lambda=g_{\tau s}h^{-1} g_{-\tau s} g_{\tau s}\Lambda\in \mathcal{U}_r\underline{K}_r'\subset S_r,  \]
which implies $\chi_{\widetilde{\Delta}_{\tau}''(r)}\leq \underline{\phi}_r$. 

For any $\Lambda$ such that $\underline{\phi}_r(\Lambda)>0$, there exist $h\in \mathcal{U}_{r/100}$ and $s\in [0,s_0)$ such that $g_{\tau s}{h^{-1}}\Lambda\in S_r$. Then 
\[g_{\tau s}\Lambda=g_{\tau s}hg_{-\tau s} g_{\tau s}h^{-1}\Lambda\in g_{\tau s}hg_{-\tau s}S_r\in \mathcal{U}_r S_r\subset \underline{K}_{2r}, \]
where the last inclusion follows by the second inclusion of (\ref{eq: inclusion for S_r}). This, {together with the fact that $0\leq \underline{\phi}_r\leq 1$}, implies that $\underline{\phi}_r\leq \chi_{\widetilde{\Delta}_{\tau}'(2r)}$. 

The moreover part of the proposition follows by the fact that $\mu_K(\underline{K}_r')\asymp \mu_K(\underline{K}_r)$ due to the construction of $\underline{K}_r'$ and Proposition \ref{prop: measure estimates the thickenings}.
\end{proof}

\bigskip

We are now ready to prove the zero-one laws in Theorem \ref{thm: zero-one laws}.  Let $\psi:[1,\infty)\to (0,1)$ be a continuous decreasing function as in Theorem \ref{thm: zero-one laws}. We fix the function $\ttr=\ttr_{\psi}$ given by Proposition \ref{prop:dani} in the remaining part of Section \ref{sec: proof of zero-one laws}. Recall that the parameter $\tau=\tau_K$ is defined as in \eqref{equ:taudef} for $K=\Q(\sqrt{-1})$ and $K=\Q(\sqrt{-3})$. {Let $c>0$ be the constant as in Theorem \ref{thm:effdoubequi} so that the effective double equidistribution estimate \eqref{equ:doubleequ} holds. Moreover, the single equidistribution estimate \eqref{equ:singleequ} also holds with $c_1$ there in place of $c$; see Remark \ref{rmk:uniformc}.}

\medskip
 
\subsection{Convergence case} \label{sec: convergence case}

In this section we establish the convergence part simultaneously for
\(K=\mathbb Q(\sqrt{-1})\) and \(K=\mathbb Q(\sqrt{-3})\).
We assume the following series 
\begin{align}\label{equ:seriesdk}
\left\lbrace\begin{array}{cc}
\sum_kk^{-1}F_{\psi}(k)^4 & K=\Q(\sqrt{-1}),\\
\sum_kk^{-1}F_{\psi}(k)^2\log\left(\tfrac{1}{F_{\psi}(k)}\right)  &  K=\Q(\sqrt{-3}).
\end{array}\right. 
\end{align}
is convergent. We would like to show $\Leb(\DI_K(\psi))=1$. In this case,  \eqref{equ:dieq} implies the following series
\begin{align}\label{eq: convergence for r}
\left\lbrace\begin{array}{cc}
\sum_{k\ge 1} \ttr(\tau k)^4 & K=\Q(\sqrt{-1}),\\
\sum_{k\ge 1} \ttr(\tau k)^2\log\!\left(1+\frac{1}{\ttr(\tau k)}\right) &  K=\Q(\sqrt{-3}).
\end{array}\right. 
\end{align}
is convergent. 
For any \(0<r<1\), recall that we have defined $\widetilde\Delta_\tau(r)$ as in \eqref{eq: upper bound set for smooth approx}. 
{Note that  \eqref{eq: convergence for r}, together with \eqref{equ:thickeningmeest}, implies that 
\begin{align}\label{equ:conseries}
   \sum_{k\ge k_0}\mu_K(\widetilde\Delta_\tau(\ttr(\tau k)))<\infty.  
\end{align}
}
Moreover, by \eqref{equ:limsupbd}, we have
\[
\DI_K(\psi)^c
\subset
\limsup_{k\to\infty}
\left(g_{-\tau k}\widetilde\Delta_\tau(\ttr(\tau k))\cap \mathcal Y\right).
\]
Hence, by the convergent case of the Borel-Cantelli lemma, we have
\[
\sum_{k \geq 1} \mathrm{Leb} \left ( E_k\right ) <\infty \quad \Rightarrow \quad \Leb(\DI_K(\psi)^c)=0.
\]
Here 
\[E_k:= \{z\in\mathbb C/\mathcal O_K:
g_{\tau k}\Lambda_z\in\widetilde\Delta_\tau(\ttr(\tau k))\}.\]

By Proposition~\ref{prop: smooth approximation}, there are some constant \(L>0\) such that for all $r>0$ sufficiently small, there exists \(\overline{\phi}_r\in C_c^\infty(X)\) so that
\[
\chi_{\widetilde\Delta_\tau(r)}
\le \overline{\phi}_r
\le \chi_{\widetilde\Delta_\tau(2r)},\quad \text{and }\, \|\overline{\phi}_r\|_{C^7}\ll r^{-L}.
\]
As in \cite[Section~7.2]{KleinbockStrombergssonYu2022} one first notes that the convergence of the relevant series forces \(\ttr(\tau k)\to 0\) as $k\to \infty$. Fix a  small constant $\eta=\frac{c\tau}{2L}$.  If \(\ttr(\tau k)>e^{-\eta k}\),
Theorem~\ref{thm: single equidistribution} gives
\begin{align}\label{eq: bound 1 for convergence}
    \mathrm{Leb}(E_k) &\le \int_{\mathcal Y} \overline{\phi}_{\ttr(\tau k)}(g_{\tau k}\Lambda_z) \dd z 
    = \mu_K(\overline{\phi}_{\ttr(\tau k)}) + O\!\left(e^{-c\tau k}\ttr(\tau k)^{-L}\right) \nonumber\\\nonumber\\
    & \ll \mu_K(\widetilde\Delta_\tau(\ttr(\tau k)))+ e^{-\frac{c\tau}{2} k}.
\end{align}
If \(\ttr(\tau k)\le e^{-\eta k}\), then 
\begin{align}\label{eq: bound 2 for convergence}
    \Leb(E_k)&\leq \int_{\mathcal Y} \overline{\phi}_{\ttr(\tau k)}(g_{\tau k}\Lambda_z) \dd z 
    \le \int_{\mathcal Y} \overline{\phi}_{e^{-\eta k}}(g_{\tau k}\Lambda_z)\dd z \nonumber\\
    &= \mu_K(\overline{\phi}_{e^{-\eta k}}) + O\!\left(e^{-c\tau k}(e^{-\eta k})^{-L}\right)
    \ll e^{-\eta k}+e^{-\frac{c\tau}{2}k}.
\end{align}
Combining \eqref{equ:conseries}, \eqref{eq: bound 1 for convergence} and \eqref{eq: bound 2 for convergence} yields
\[
\sum_{k\ge k_0}\mathrm{Leb}(E_k)
\ll
\sum_{k\ge k_0}\mu_K(\widetilde\Delta_\tau(\ttr(\tau k))) + \sum_{k\ge k_0}\left(e^{-\eta k}+e^{-\frac{c\tau}{2}k}\right) < + \infty \, .
\]
Therefore, we conclude that $\Leb(\DI_K(\psi)^c)=0$, and hence $\mathrm{Leb}(\DI_K(\psi))=1$.

\medskip

\subsection{Divergence case}
\label{sec:dive}
We now prove the divergence case in Theorem \ref{thm: zero-one laws}. We assume the series in \eqref{equ:seriesdk} is divergent. Thus in view of \eqref{equ:dieq}, the series in \eqref{eq: convergence for r} is divergent. {We first note that we may also assume 
\begin{align}\label{equ:rdezero}
\lim_{s\to\infty}\ttr(s)=0,
\end{align}
since otherwise we would have $\psi(t)\leq \frac{c}{t}$ for some $0<c<c_K$, and then by Theorem \ref{thm:dirichlet} we already have $\Leb(\DI_K(\psi))=0$.}

\medskip

\subsubsection{A reduction lemma} \label{sec: reduction lemma}
 In this section, we first prove a simple reduction lemma (Lemma \ref{lemma: reduction for divergence}) which allows us to assume that the shrinking function \(\ttr\) has polynomial upper and lower bounds in the divergence case, {cf. \cite[Lemma 4.2]{StrombergssonYu2026}}. Let us start with the following elementary fact.

\begin{Lem}\label{lemma: basic reduction}
Let \(0<\alpha<1\).  Let \(\{a_k\}_{k\in\mathbb N}\) be {an eventually} decreasing sequence of positive
numbers such that
\[
\sum_{k=1}^\infty a_k=\infty.
\]
Then the sequence \( b_k:=\min\{a_k,k^{-\alpha}\} \) is {eventually} decreasing and satisfies
\[
\sum_{k=1}^\infty b_k=\infty.
\]
\end{Lem}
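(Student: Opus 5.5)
Both claims are elementary, and we treat monotonicity first and divergence second. For monotonicity, let $k_0$ be such that $a_k$ is decreasing for $k\geq k_0$. Since $k\mapsto k^{-\alpha}$ is decreasing, for $k\geq k_0$ both $a_{k+1}\leq a_k$ and $(k+1)^{-\alpha}\leq k^{-\alpha}$ hold. Taking minima preserves these inequalities, so
\[
b_{k+1}=\min\{a_{k+1},(k+1)^{-\alpha}\}\leq \min\{a_k,k^{-\alpha}\}=b_k .
\]
Hence $(b_k)$ is eventually decreasing, and it is positive.

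For divergence, we split into two cases according to how often $a_k\leq k^{-\alpha}$ occurs. Let $A:=\{k: a_k\leq k^{-\alpha}\}$. On $A$ we have $b_k=a_k$, and off $A$ we have $b_k=k^{-\alpha}$. Thus
\[
\sum_k b_k=\sum_{k\in A}a_k+\sum_{k\notin A}k^{-\alpha}.
\]
If the second sum diverges we are done, so we may assume $\sum_{k\notin A}k^{-\alpha}<\infty$. We then need to show $\sum_{k\in A}a_k=\infty$.

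The main point is to exploit monotonicity of $(a_k)$ together with the fact that $A^c$ is ``thin'' when $\sum_{k\notin A}k^{-\alpha}$ converges with $\alpha<1$. We use a dyadic (Cauchy condensation) argument. For $k$ large with $a_k$ decreasing, group indices into blocks $[2^n,2^{n+1})$. Monotonicity gives $\sum_{k\in[2^n,2^{n+1})}a_k\leq 2^n a_{2^n}$. On the other hand, $\sum_{k\notin A,\,k\in[2^n,2^{n+1})}k^{-\alpha}\geq 2^{-(n+1)\alpha}\#(A^c\cap[2^n,2^{n+1}))$, and this tends to $0$. Therefore
\[
\#\bigl(A^c\cap[2^n,2^{n+1})\bigr)=o\bigl(2^{n\alpha}\bigr)=o(2^n),
\]
using $\alpha<1$. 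In particular, for $n$ large at least half of the block lies in $A$. Hence $A$ contains at least $2^{n-1}$ indices in $[2^{n},2^{n+1})$, and all these satisfy $a_k\geq a_{2^{n+1}}$. It follows that
\[
\sum_{k\in A\cap[2^n,2^{n+1})}a_k\geq 2^{n-1}a_{2^{n+1}}=\tfrac18\cdot 2^{n+2}a_{2^{n+1}}.
\]
By Cauchy condensation, $\sum_k a_k=\infty$ together with eventual monotonicity implies $\sum_n 2^{n}a_{2^{n}}=\infty$. Summing the previous display over $n$ then gives $\sum_{k\in A}a_k=\infty$. This completes the argument, and the only care needed is handling the finitely many initial indices where monotonicity may fail, which affect neither claim.

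An alternative, even shorter route avoids the counting. If $b_k=k^{-\alpha}$ for infinitely many $k$, one can use monotonicity of $a$ to compare blocks directly. The condensation argument above is the cleanest to carry out, so we commit to it.
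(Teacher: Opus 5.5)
Your proof is correct, but it takes a different route from the paper's. The paper applies Cauchy condensation directly to $(b_k)$, which is legitimate because $(b_k)$ has just been shown to be eventually decreasing. This gives $2^m b_{2^m}=\min\{2^m a_{2^m},2^{m(1-\alpha)}\}$, and there are two cases. Either the second entry is the minimum for infinitely many $m$, in which case those condensed terms tend to infinity. Or, for all large $m$, the condensed terms coincide with those of $(a_k)$, whose condensed series diverges.

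You instead never use the monotonicity of $(b_k)$ for the divergence claim. You split the indices according to which term realizes the minimum. In the nontrivial case $\sum_{k\notin A}k^{-\alpha}<\infty$, you count to show that $A^c$ occupies a vanishing proportion of each dyadic block. You then bound $\sum_{k\in A}a_k$ from below by condensed terms of $(a_k)$.

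The paper's argument is shorter and needs no counting; the two-case alternative sketched in your closing paragraph has essentially the same shape. Your argument costs some bookkeeping, but it proves a slightly more robust deletion principle. A decreasing divergent series remains divergent after removing any index set $E$ with $\sum_{k\in E}k^{-\alpha}<\infty$ (for $\alpha\le 1$), regardless of how $E$ arises. As a minor point, your displayed upper bound $\sum_{k\in[2^n,2^{n+1})}a_k\le 2^n a_{2^n}$ is never used and can be dropped.
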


\begin{proof} 
{Up to removing finitely many elements from the sequence $\{a_k\}_{k\in \N}$, we may assume $\{a_k\}_{k\in \N}$ is decreasing.} It is then clear that \(\{b_k\}\) is also decreasing. It remains to prove divergence. By the Cauchy condensation test, 
\[ 
\sum_{k=1}^\infty a_k=\infty \qquad\Leftrightarrow\qquad \sum_{m=1}^\infty 2^m a_{2^m}=\infty, 
\]
and it is enough to show 
\[ 
\sum_{m=1}^\infty 2^m b_{2^m}=\infty. 
\] 
For every \(m\), 
\[ 
2^m b_{2^m} = 2^m\min\{a_{2^m},2^{-m\alpha}\} = \min\{2^m a_{2^m},2^{m(1-\alpha)}\}. 
\] 
Since \(0<\alpha<1\), we have \(2^{m(1-\alpha)}\to\infty\) as $m\to\infty$. If 
\[ 
\min\{2^m a_{2^m},2^{m(1-\alpha)}\}=2^{m(1-\alpha)} 
\] 
for infinitely many \(m\), then the series \(\sum_m 2^m b_{2^m}\) clearly diverges. Otherwise, for all sufficiently large \(m\), 
\[ 
2^m b_{2^m}=2^m a_{2^m}, 
\] 
and hence 
\[ 
\sum_{m=1}^\infty 2^m b_{2^m} = \sum_{m=1}^\infty 2^m a_{2^m}+O(1) = \infty. 
\] 
This proves the lemma. 
\end{proof}

\medskip

In the following,  given any continuous decreasing function
\begin{align}\label{equ:tildettr}
\tilde \ttr:[s_{\tilde r},\infty)\longrightarrow (0,\infty),
\qquad
\lim_{s\to\infty}\tilde \ttr(s)=0,
\end{align}
we define 
\begin{align}\label{equ:ndidef}
\mathbf{NDI}_{K}(\tilde \ttr):=
\left\{
z\in\mathbb C/\mathcal O_K:
g_{\tau k}{\Lambda_z}
\in
\widetilde\Delta_\tau(\tilde \ttr(\tau(k+1)))
\text{ for infinitely many }k
\right\}.
\end{align}
By Lemma~\ref{lemma: limsupbd}, we have 
\begin{align}\label{equ:ndidieq}
\NDI_K(\ttr)\subset \DI_K(\psi)^c.
\end{align}

\medskip

\begin{Lem}\label{lemma: reduction for divergence}
Fix parameters $\gamma,\gamma'$ such that
\[
\gamma>\frac{1}{\beta_K}>2\gamma'>0,
\]
where
\[
\beta_K:=
\begin{cases}
4 & K=\mathbb Q(\sqrt{-1}),\\
2 & K=\mathbb Q(\sqrt{-3}).
\end{cases}
\]
Then, in proving the divergence statement in Theorem \ref{thm: zero-one laws}, we may assume that
\begin{align}\label{eq: lb and up for r(k)}
k^{-\gamma}\le \ttr(k)\le k^{-\gamma'},
\qquad
\forall\,k\ge s_0.
\end{align}

\end{Lem}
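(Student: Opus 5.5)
We reduce to a modified shrinking function $\tilde{\ttr}$ with the same divergence behaviour and $\NDI_K(\tilde\ttr)\subset\DI_K(\psi)^c$. Write $\Phi_K(x)=x^4$ if $K=\Q(\sqrt{-1})$ and $\Phi_K(x)=x^2\log(1+1/x)$ if $K=\Q(\sqrt{-3})$, so that the divergence hypothesis reads $\sum_k\Phi_K(\ttr(\tau k))=\infty$. Equivalently, $\sum_k\Phi_K(\ttr(k))=\infty$, which follows from \eqref{equ:itmequiva}, since $\ttr$ is decreasing and $\Phi_K$ is eventually increasing near $0$. We handle the upper bound $\ttr(k)\le k^{-\gamma'}$ and the lower bound $\ttr(k)\ge k^{-\gamma}$ separately.

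\emph{Lower bound.} Replace $\ttr$ by $\ttr_1(s):=\min\{\ttr(s),\,s^{-\gamma'}\}$. This function is continuous, decreasing and tends to $0$. Since $\ttr_1\le\ttr$, we get $\widetilde\Delta_\tau(\ttr_1(\cdot))\subset\widetilde\Delta_\tau(\ttr(\cdot))$, hence $\NDI_K(\ttr_1)\subset\NDI_K(\ttr)\subset\DI_K(\psi)^c$ by \eqref{equ:ndidieq}. So it suffices to show $\Leb(\NDI_K(\ttr_1))=1$. For divergence, set $a_k:=\Phi_K(\ttr(k))$, which is eventually decreasing with $\sum a_k=\infty$. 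In the Gaussian case, $\Phi_K(\ttr_1(k))=\min\{a_k,k^{-4\gamma'}\}$ with $\alpha=4\gamma'<1$ because $2\gamma'<1/4$, so Lemma \ref{lemma: basic reduction} applies directly. In the Eisenstein case, $\Phi_K(\min\{x,y\})=\min\{\Phi_K(x),\Phi_K(y)\}$ by monotonicity, and $\Phi_K(k^{-\gamma'})\ge k^{-2\gamma'}$ for large $k$. Hence $\Phi_K(\ttr_1(k))\ge\min\{a_k,k^{-2\gamma'}\}$ with $\alpha=2\gamma'<1/2<1$, and Lemma \ref{lemma: basic reduction} again gives divergence.

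\emph{Upper bound.} Replace $\ttr_1$ by $\ttr_2(s):=\max\{\ttr_1(s),\,s^{-\gamma}\}$. This only increases the target sets, so it must be justified differently. Set $\ttr_3(s):=s^{-\gamma}$. Because $\gamma\beta_K>1$, the series $\sum_k\Phi_K(\ttr_3(\tau k))$ converges; for $\beta_K=2$ the log factor is harmless, as $k^{-2\gamma}\log k$ is still summable. The convergence case of \S\ref{sec: convergence case} depends only on this summability and on \eqref{equ:limsupbd} with a decreasing function tending to $0$. It therefore shows that a.e.\ $z$ satisfies $g_{\tau k}\Lambda_z\notin\widetilde\Delta_\tau(\ttr_3(\tau k))$ for all large $k$; the shift $k$ versus $k+1$ is absorbed since $\ttr_3(\tau(k+1))\asymp\ttr_3(\tau k)$, and we may pass to $\ttr_3(\tau k)/2$ if needed. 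Now, if $z\in\NDI_K(\ttr_2)$, then for infinitely many $k$ the hit is with radius $\ttr_2(\tau(k+1))=\max\{\ttr_1,\ttr_3\}$. Discarding the null set above, for all but finitely many of these $k$ the hit must come from $\ttr_1$, because $\widetilde\Delta_\tau(\max\{a,b\})=\widetilde\Delta_\tau(a)\cup\widetilde\Delta_\tau(b)$. Hence $\NDI_K(\ttr_2)\subset\NDI_K(\ttr_1)$ up to a null set. Divergence of $\sum_k\Phi_K(\ttr_2(\tau k))$ holds trivially since $\ttr_2\ge\ttr_1$. Finally, $\ttr_2$ satisfies \eqref{eq: lb and up for r(k)} for large $k$, using $\gamma>\gamma'$; after enlarging the starting point of the domain it is decreasing, continuous and tends to $0$.

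The main obstacle I expect is the upper-bound step. One must check that the convergence argument is genuinely applicable to the auxiliary function $\ttr_3$, which does not come from a $\psi$, and that the off-by-one index shift in the definition of $\NDI_K$ is handled, e.g.\ by working with $\widetilde\Delta_\tau(\ttr_3(\tau(k+1)))$ directly via the same Borel--Cantelli argument. The lower-bound step is routine given Lemma \ref{lemma: basic reduction}.
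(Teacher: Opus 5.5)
Your proposal is correct and is essentially the paper's argument. It uses the same two truncations, just in the opposite order: $\min\{\cdot,s^{-\gamma'}\}$, where the $\NDI_K$-set only shrinks and divergence is preserved via Lemma \ref{lemma: basic reduction}, and $\max\{\cdot,s^{-\gamma}\}$, justified by the convergence case together with the inclusion $\NDI_K(\max\{\ttr_1,s^{-\gamma}\})\subset\NDI_K(\ttr_1)\cup\NDI_K(s^{-\gamma})$.

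Two cosmetic remarks. Your labels ``Lower bound'' and ``Upper bound'' are swapped relative to the inequality each step actually produces. Also, the $k$ versus $k+1$ shift needs no asymptotic comparison, since $\ttr_3(\tau(k+1))\le\ttr_3(\tau k)$ already gives the required inclusion of target sets.
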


\begin{proof}
{In view of the assumption \eqref{equ:rdezero} and the relation \eqref{equ:limsupbd}, the divergence case of Theorem \ref{thm: zero-one laws} follows from the following statement. }

{For any continuous decreasing function $\tilde\ttr$ as in \eqref{equ:tildettr}, we have 
\begin{align}\label{equ:redustate}
\left\lbrace\begin{array}{cc}
\sum_{k\ge 1} \tilde \ttr(\tau k)^4=\infty & K=\Q(\sqrt{-1}),\\
\sum_{k\ge 1} \tilde \ttr(\tau k)^2\log\!\left(1+\frac{1}{\tilde\ttr(\tau k)}\right)=\infty &  K=\Q(\sqrt{-3}),
\end{array}\right. \quad \Rightarrow\quad \Leb(\NDI_K(\tilde\ttr))=1.
    \end{align}
Thus it suffices to show that in order to prove the above statement, we may assume 
\[
k^{-\gamma}\le \tilde\ttr(k)\le k^{-\gamma'},
\qquad
\forall\,k\ge s_0.
\]
Now take $\tilde\ttr$ as in \eqref{equ:tildettr} and we assume the series in \eqref{equ:redustate} diverges.}
We first show that we may assume
\begin{align}\label{eq: reduction of lb for divergence}
    \tilde\ttr(k)\ge k^{-\gamma}, \quad \forall \, k> s_{\tilde{\ttr}}.
\end{align}
Let \(\tilde\ttr_1(s):=\max\{\tilde\ttr(s),s^{-\gamma}\} \). Clearly \(\tilde\ttr_1\) is continuous, decreasing, and tends to \(0\) as \(s\to\infty\).  Moreover, since $\tilde\ttr_1(s)\geq \tilde\ttr(s)$, the divergence of the series in \eqref{equ:redustate} implies that the same series in \eqref{equ:redustate}, but with $\tilde\ttr_1$ in place of $\tilde\ttr$ also diverges. Thus in order to show that we can assume (\ref{eq: reduction of lb for divergence}),  it suffices to prove the following implication
\begin{align}\label{eq: implication 1}
    \mathrm{Leb}(\NDI_K(\tilde\ttr_1))=1
\quad\Rightarrow\quad
\mathrm{Leb}(\NDI_K(\tilde\ttr))=1.
\end{align}
To see this, we first note that by the choice of $\gamma$, {Theorem \ref{thm: measure estimates} and} Proposition \ref{prop: measure estimates the thickenings}  imply that the series $\sum_{k\ge s_0}\mu_K(\widetilde\Delta_\tau((\tau k)^{-\gamma}))$ converges. Hence, by the convergence case in Theorem \ref{thm: zero-one laws}, we have
\begin{align}\label{eq: convergence consequence}
\Leb(\NDI_K(s^{-\gamma}))=0.
\end{align}
{Now by similar argument as in \cite[Lemma 4.2]{StrombergssonYu2026}, we have 
\begin{align}\label{equ:ndiinclre}
\NDI_K(\tilde\ttr_1)\subset \NDI_K(\tilde\ttr)\cup \NDI_K(s^{-\gamma}).
\end{align}}
Combining \eqref{eq: convergence consequence} and \eqref{equ:ndiinclre} we immediately see the implication \eqref{eq: implication 1}.

Next we show that we may assume
\begin{align}\label{eq: reduction of up for divergence}
    \tilde\ttr(k)\le k^{-\gamma'}, \quad \forall \, k> s_{\tilde{\ttr}}.
\end{align}
Let \(\tilde\ttr_2(s):=\min\{\tilde\ttr(s),s^{-\gamma'}\}\). 
Clearly \(\tilde\ttr_2\) is continuous, decreasing, tends to \(0\), and satisfies $\NDI_K(\tilde\ttr_2)\subset \NDI_K(\tilde\ttr)$. {Thus the desired implication 
\begin{align}
\label{eq: implication 2}
    \mathrm{Leb}(\NDI_K(\tilde\ttr_2))=1
\quad\Rightarrow\quad
\mathrm{Leb}(\NDI_K(\tilde\ttr))=1
    \end{align}
    holds trivially.}
It is thus enough to show that \(\tilde\ttr_2\) still satisfies the divergence hypothesis as in (\ref{equ:redustate}). For any $k>\tau^{-1}s_{\tilde\ttr}$, set
\begin{align*}
   a_k:=\mathrm{f}(\tilde \ttr(\tau k))\quad \text{with}\quad \mathrm{f}(r):= \left\lbrace\begin{array}{cc}
r^4 & K=\Q(\sqrt{-1}),\\
 r^2\log\!\left(1+\frac{1}{r}\right) &  K=\Q(\sqrt{-3}).
 \end{array}\right.
\end{align*}
Note that by the assumption \eqref{equ:tildettr} and the divergence assumption on $\tilde\ttr$ we have $\sum_k a_k=\infty$ and $\{a_k\}$ is eventually decreasing in $k$.

Fix a parameter \(\alpha\) satisfying \( \beta_K\gamma'<\alpha<1.\) By Lemma \ref{lemma: basic reduction}, we have
\[
\sum_{k}\min\{a_k,k^{-\alpha}\}
=
\infty.
\]
Moreover, since \(\beta_K\gamma'<\alpha\), we have \(\mu_K(\widetilde\Delta_\tau((\tau k)^{-\gamma'})) \gg k^{-\alpha}\) for all sufficiently large \(k\). {This, together with \eqref{equ:thickeningmeest}, the definition of $\tilde{\ttr}_2$ and the fact that the function $r\mapsto \mathrm{f}(r)$ is  strictly increasing  on $(0, 1)$, implies that
\begin{align*}
\min\{a_k,k^{-\alpha}\}
&\ll
\min\!\left\{
\mu_K(\widetilde\Delta_\tau(\tilde\ttr(\tau k))),
\mu_K(\widetilde\Delta_\tau((\tau k)^{-\gamma'}))
\right\}\\
&\asymp \min\{\mathrm{f}(\tilde \ttr(\tau k)), \mathrm{f}((\tau k)^{-\gamma'})\}
=\mathrm{f}(\tilde \ttr_2(\tau k)).
\end{align*}}
Hence, $\sum_{k} \mathrm{f}(\tilde \ttr_2(\tau k)) = \infty$, that is, the series in  \eqref{equ:redustate}, with  $\tilde\ttr_2$ in place of $\tilde\ttr$, also diverges. This completes the proof of this lemma.
\end{proof}

\medskip

\subsubsection{Proof of the divergence case}
We now give the proof of  the divergence case of the zero-one laws in Theorem \ref{thm: zero-one laws}. As discussed in the beginning of Section \ref{sec:dive}, we assume $\ttr=\ttr_{\psi}$ satisfies \eqref{equ:rdezero} and   that the series in \eqref{eq: convergence for r} is divergent. Recall $\mathbf{NDI}_K(\ttr)$ is defined as in \eqref{equ:ndidef}. By Lemma \ref{lemma: limsupbd} , it suffices to show $\Leb(\mathbf{NDI}_K(\ttr))=1$. Then in view of Lemma \ref{lemma: reduction for divergence}, we may also assume $\ttr$ satisfies \eqref{eq: lb and up for r(k)}.

Let $r_0>0$ be a sufficiently small parameter such that the statements in Proposition \ref{prop: smooth approximation} hold for all $0<r<r_0$. We use the two families
\[
\widetilde\Delta'_\tau(r)=\bigcup_{0\le s<s_0}g_{-\tau s} \underline{K}_r, \qquad \widetilde\Delta''_\tau(r)=\bigcup_{0\le s<s_0}g_{-\tau s}\underline{K}'_r,\quad (0<r<r_0)
\]
defined as in (\ref{eq: upper bound set for smooth approx}) and (\ref{eq: low bound of smooth approx}) respectively. {Here $0<s_0<0.01$ is some sufficiently small parameter we fix so that the statements of Propositions \ref{prop:disjoint} and \ref{prop: smooth approximation} hold.} Recall from Proposition \ref{prop: smooth approximation} that, for all $0<r<r_0$ there is
\(\underline{\phi}_r\in C_c^\infty(X)\) such that
\[
\chi_{\widetilde\Delta''_\tau(r)}
\le \underline{\phi}_r
\le \chi_{\widetilde\Delta'_\tau(2r)},
\qquad
\|\underline{\phi}_r\|_{C^7}\ll r^{-L},
\]
and
\[
\mu_K(\widetilde\Delta''_\tau(r))
\asymp
\mu_K(\widetilde\Delta'_\tau(r)).
\]
Let $k_0\in \N$ be a large integer and set
\[
\rho_k:=\frac{1}{2}\ttr(\tau(k+1)),\quad \forall\, k\geq k_0.
\]
Then \(\rho_k\to0\), and after increasing $k_0$ if necessary all the functions
\(\underline{\phi}_{\rho_k}\) are defined for \(k\ge k_0\). Define
\[
h_k(z):=\underline{\phi}_{\rho_k}(g_{\tau k}\Lambda_z),
\qquad
b_k:=\int_{\C/\cO_K} h_k(z)\, \mathrm{d} z .
\]
By Theorem \ref{thm: single equidistribution}, (\ref{eq: lb and up for r(k)}) and the fact that \(\|\underline{\phi}_{\rho_k}\|_{C^7}\ll \rho_k^{-L}\), we have
\begin{align}\label{equ:bkest}
b_k = \mu_K(\underline{\phi}_{\rho_k}) + O(e^{-c\tau k}\rho_k^{-L}) = \mu_K(\underline{\phi}_{\rho_k})+o(\mu_K(\underline{\phi}_{\rho_k})).
\end{align}
In particular, for all sufficiently large \(k\),
\begin{align}\label{eq: asymptotic for b_k}
    b_k\asymp \mu_K(\underline{\phi}_{\rho_k})
\asymp \mu_K(\widetilde\Delta'_\tau(\rho_k)).
\end{align}
Hence the divergence assumption is equivalent to
\begin{align}\label{eq: bk divergent}
    \sum_{k\ge k_0} b_k=\infty .
\end{align}
 For \(j>i\ge k_0\), put
\[
b_{i,j}:=
\int_{\mathbb C/\cO_K}
\bigl(h_i(z)h_j(z)-b_i b_j\bigr)\,\mathrm{d} z .
\]
For \(k_2>k_1\ge k_0\), define the correlation function
\begin{align}\label{eq: defn of correlation}
    Q_{k_1,k_2}:=
\int_{\mathbb C/\mathcal O_K}
\left(
\sum_{i=k_1}^{k_2}h_i(z)-\sum_{i=k_1}^{k_2}b_i
\right)^2 \mathrm{d}z .
\end{align}
{Under the assumption \eqref{eq: bk divergent}, a divergent Borel-Cantelli lemma (see e.g. \cite[Chapter I, Lemma 10]{Sp79})} shows that if 
\begin{align}\label{eq: required correlation bound}
     Q_{k_1,k_2}\ll \sum_{j=k_1}^{k_2} b_j,\quad \forall \, k_2\geq k_1 \gg k_0,
\end{align}
then for Lebesgue a.e. $z\in \C/\cO_K$, $h_k(z)>0$ for infinitely many $k\geq k_0$. {Recall 
$$
h_k(z)=\underline{\phi}_{\rho_k}(g_{\tau k}\Lambda_z)\leq \chi_{\widetilde\Delta'_\tau(\ttr(\tau(k+1)))}(g_{\tau k}\Lambda_z)\leq \chi_{\widetilde\Delta_\tau(\ttr(\tau(k+1)))}(g_{\tau k}\Lambda_z).
$$
Here the last inequality follows from the relation \eqref{equ:deltaincl}. Thus $h_k(z)>0$ for infinitely many $k\geq k_0$ implies that $z\in \NDI_K(\ttr)$}, whence $\Leb(\NDI_K(\ttr))=1$. Then in view of the relation \eqref{equ:ndidieq}, we get $\Leb(\DI_K(\psi))=0$ as desired.

\medskip

The remaining of the proof is devoted to proving the bound \eqref{eq: required correlation bound}. {We first give various estimates on $b_{i,j}$ for all $j\geq i\geq k_0$.}  By Theorem \ref{thm:effdoubequi}, \eqref{equ:bkest}, (\ref{eq: asymptotic for b_k}) and (\ref{eq: lb and up for r(k)}), we have for all $j\geq i>k_0$,
\begin{align}\label{eq: double mixing}
    \int_{\C/\cO_K} h_i(z)h_j(z)\,\mathrm{d}z
=
b_ib_j
+
O\!\left(e^{-c\tau (j-i)}\rho_i^{-L}\rho_j^{-L}
+ e^{-c\tau j}\rho_j^{-L}b_i+
e^{-c\tau i}\rho_i^{-L}b_j\right).
\end{align}
Let 
\begin{align*}
    D_{i,j}:=\min\{i,j-i\},\quad \forall\, j>i\geq k_0.
\end{align*} 
Since \(\rho_k\) satisfies the lower bound in  (\ref{eq: lb and up for r(k)}), in view of (\ref{eq: asymptotic for b_k}) and \eqref{equ:thickeningmeest} there exists some absolute $\Theta>0$ (depending only on parameters $c, \tau$ and $L$) such that
\begin{align}\label{eq: bound 1 for bij}
    |b_{i,j}|\ll e^{-c\tau D_{i,j}/2}b_j \qquad \text{whenever
\(D_{i,j}\ge \Theta\log j\)}.
\end{align}
For $D_{i,j}<\Theta\log j$, if \(D_{i,j}=i<\Theta\log j\), then \(j-i\ge \frac{3j}{4}\) for all large \(j\). Then similarly  (\ref{eq: lb and up for r(k)}), \eqref{eq: asymptotic for b_k}, \eqref{equ:thickeningmeest} and (\ref{eq: double mixing}) give
\begin{align}\label{eq: bound 2 for bij}
    |b_{i,j}|\ll e^{-c{\tau}i/2}b_j \qquad \text{whenever
\(D_{i,j}=i<\Theta\log j\)}.
\end{align}
It remains to treat the case when $D_{i,j}= j-i\le \Theta\log j$.  In this case define
\[
\Phi_{i,j}(x):=\underline{\phi}_{\rho_i}(x)\underline{\phi}_{\rho_j}(g_{\tau(j-i)}x).
\]
Again by Theorem~\ref{thm: single equidistribution},
\[
\int_{\C/\cO_K} h_i(z)h_j(z)\,\mathrm{d}z
=\int_{\C/\cO_K} \Phi_{i,j}(g_{\tau i}\Lambda_z)\dd z =
\mu_K(\Phi_{i,j})
+
O\!\left(e^{-c\tau i}\|\Phi_{i,j}\|_{C^7}\right).
\]
Using the Sobolev norm estimate for $\Phi_{i,j}$ under left translation (cf. \cite[Lemma 3.2]{StrombergssonYu2026}), the lower bound in  (\ref{eq: lb and up for r(k)}) on \(\rho_i,\rho_j\), the estimates \eqref{eq: asymptotic for b_k}, \eqref{equ:thickeningmeest}  and the estimate $i=j-D_{i,j}>\frac{3}{4}j$ in this case, 
the above error term is seen to be bounded by \(O(e^{-c{\tau} j/2}b_j)\). Thus
\begin{align}\label{eq: bound 3 for bij}
    |b_{i,j}|\le \mu_K(\Phi_{i,j})+O(e^{-c{\tau} j/2}b_j),
\qquad \text{whenever $D_{i,j}=j-i<\Theta\log j$}.
\end{align}
Now we are ready to give the desired upper bound for $Q_{k_1,k_2}$. Expanding the square in (\ref{eq: defn of correlation}) gives
\begin{align}\label{eq: expanding correlation}
    Q_{k_1,k_2}
\le
\sum_{i=k_1}^{k_2}b_i
+
2\sum_{k_1\le i<j\le k_2} |b_{i,j}|.
\end{align}
In view of (\ref{eq: bound 1 for bij}) and (\ref{eq: bound 2 for bij}), the contribution to $\sum_{k_1\leq i<j\leq k_2} |b_{i,j}|$ of those $k_1\leq i<j\leq k_2$ satisfying either $D_{i,j}\geq \Theta\log j$ or $D_{i,j}=i< \Theta\log j$ is bounded by
\begin{align}\label{eq: contribution for long range}
    O\!\left(\sum_{j=k_1}^{k_2}b_j\right).
\end{align}
Now in view of (\ref{eq: bound 3 for bij}), it remains to get the following bound
\begin{align*}
    \sum_{k_1<j\le k_2}\sum_{a_j\le i<j}\mu_K(\Phi_{i,j})\ll \sum_{j=k_1}^{k_2}b_j. 
\end{align*}
Here $a_j:=\max\{k_1,j-\Theta\log j\}$. Since \(0\le \underline{\phi}_{\rho_i}\le\chi_{\widetilde\Delta'_\tau(2\rho_i)}\) {and $\ttr$ is decreasing (recall $\rho_k=\frac{1}{2}\ttr(\tau(k+1))$)}, we have
\[
\sum_{a_j\le i<j}\mu_K(\Phi_{i,j})
\le
\sum_{q=1}^{j-a_j}
\mu_K\!\left(
g_{\tau q}\widetilde\Delta'_\tau(2\rho_{{a_j}})
\cap
\widetilde\Delta'_\tau(2\rho_j)
\right).
\]
Let
\[
J_j:=\left\lfloor \frac{1}{4\tau}\log(\frac1{2\rho_{{a_j}}})\right\rfloor,
\]
be as in Proposition~\ref{prop:disjoint}.  Proposition~\ref{prop:disjoint} implies that every block
\[
g_{\tau q}\widetilde\Delta'_\tau(2\rho_{a_j}),
\,\cdots,\,
g_{\tau(q+J_j)}\widetilde\Delta'_\tau(2\rho_{a_j})
\]
is pairwise disjoint.  Therefore
\begin{align}\label{eq: bound due to disjointness}
    \sum_{q=1}^{j-a_j}
\mu_K\!\left(
g_{\tau q}\widetilde\Delta'_\tau(2\rho_{a_j})
\cap
\widetilde\Delta'_\tau(2\rho_j)
\right)
{\leq }
\left(\frac{j-a_j}{J_j}+1\right)
\mu_K(\widetilde\Delta'_\tau(2\rho_j)).
\end{align}
Since \(j-a_j\leq \Theta\log j\), while the polynomial upper bound in \eqref{eq: lb and up for r(k)} on \(\rho_{a_j}\) gives
\(J_j\gg_{\gamma'}\log j\), (\ref{eq: bound due to disjointness}) implies that
\begin{align}\label{eq: bound for Phi ij}
 \sum_{k_1<j\leq k_2}\sum_{a_j\le i<j}\mu_K(\Phi_{i,j})\ll \sum_{k_1<j\leq k_2}\mu_K(\widetilde\Delta'_\tau(2\rho_j))
\ll \sum_{k_1<j\leq k_2} b_j .
\end{align}
Combining (\ref{eq: contribution for long range}) and (\ref{eq: bound for Phi ij}), in view of (\ref{eq: expanding correlation}), we obtain the desired upper bound for correlation functions as in (\ref{eq: required correlation bound}). The proof of the divergence case of Theorem \ref{thm: zero-one laws} is then finished.

\bibliographystyle{abbrv}
\bibliography{DKbibliog}

\end{document}